\pgfplotsset{cycle list/Set1-4}
\newcommand{\blue}[1]{{#1}}
\newcommand{\bbN}{\mathbb{N}}
\newcommand{\bbR}{\mathbb{R}}
\newcommand{\bbZ}{\mathbb{Z}}
\newcommand{\bbone}{\mathbb{1}}
\newcommand{\Conv}{\operatorname{Conv}}
\newcommand{\ext}{\operatorname{ext}}
\newcommand{\gr}{\operatorname{gr}}
\newcommand{\Proj}{\operatorname{Proj}}
\newcommand\defeq{\mathrel{\overset{\makebox[0pt]{\mbox{\normalfont\tiny\sffamily def}}}{=}}}
\newcommand{\NL}{\texttt{NL}}
\newcommand{\NN}{\texttt{NN}}
\newcommand{\ReLu}{\texttt{ReLU}}
\newcommand{\Leaky}{\texttt{Leaky}}
\newcommand{\Max}{\texttt{Max}}
\newcommand{\trans}{\mathsf{Transport}}
\newcommand{\tw}{\widetilde{w}}
\newcommand{\tx}{\widetilde{x}}
\newcommand{\Rcayley}{R_{\operatorname{cayley}}}
\newcommand{\Rextended}{R_{\operatorname{extended}}}
\newcommand{\Rideal}{R_{\operatorname{ideal}}}
\newcommand{\Rsharp}{R_{\operatorname{sharp}}}
\newcommand{\Smax}{S_{\operatorname{max}}}
\newcommand{\Scayley}{S_{\operatorname{cayley}}}
\newcommand{\ubar}[1]{\underline{#1}}
\renewcommand{\bar}[1]{\overline{#1}}
\newtheorem{assumption}{Assumption}
\newtheorem{observation}{Observation}
\begin{document}

\title{Strong mixed-integer programming formulations for trained neural networks\footnote{\blue{An extended abstract version of this paper appeared in \cite{Anderson:2019}.}}}

\titlerunning{MIP formulations for trained neural networks}

\author{Ross Anderson \and Joey Huchette \and Will Ma \and Christian Tjandraatmadja \and Juan Pablo Vielma}

\institute{R. Anderson \at \email{rander@google.com} \and J. Huchette \at \email{joehuchette@rice.edu} \and W. Ma \at \email{wm2428@gsb.columbia.edu} \and C. Tjandraatmadja \at \email{ctjandra@google.com} \and J. P. Vielma \at \email{jvielma@mit.edu, jvielma@google.com}}

\maketitle

\begin{abstract}
We present strong mixed-integer programming (MIP) formulations for high-dimensional piecewise linear functions that correspond to trained neural networks. These formulations can be used for a number of important tasks, such as verifying that an image classification network is robust to adversarial inputs, or solving decision problems where the objective function is a machine learning model. We present a generic framework, which may be of independent interest, that provides a way to construct sharp or ideal formulations for the maximum of $d$ affine functions over arbitrary polyhedral input domains. We apply this result to derive MIP formulations for a number of the most popular nonlinear operations (e.g. ReLU and max pooling) that are strictly stronger than other approaches from the literature. We corroborate this computationally, showing that our formulations are able to offer substantial improvements in solve time on verification tasks for image classification networks.
\end{abstract}

\keywords{Mixed-integer programming, Formulations, Deep learning}

\subclass{90C11}

\begin{acknowledgements}
The authors gratefully acknowledge Yeesian Ng and Ond\v{r}ej S\'{y}kora for many discussions on the topic of this paper, and for their work on the development of the \texttt{tf.opt} package used in the computational experiments.
\end{acknowledgements}

\section{Introduction}

Deep learning has proven immensely powerful at solving a number of important predictive tasks arising in areas such as image classification, speech recognition, machine translation, and robotics and control~\cite{Goodfellow:2016,LeCun:2015}. The workhorse model in deep learning is the feedforward network \blue{$\NN{}:\bbR^{m_0} \to \bbR^{m_s}$ that maps a (possibly very high-dimensional) input $x^0 \in \bbR^{m_0}$ to an output $x^s = \NN{}(x^0)\in \bbR^{m_s}$. A feedforward network with $s$ layers can be recursively described as }
\begin{equation} \label{eqn:simple-feedforward-network}
    x^i_j = \texttt{NL}^{i,j}(w^{i,j} \cdot x^{i-1} + b^{i,j}) \quad \blue{ \forall i \in \llbracket s \rrbracket,\; j \in \llbracket m_i \rrbracket}
\end{equation}
\blue{where $\llbracket n \rrbracket \defeq \{1,\ldots,n\}$, $m_i$ is both the number of \emph{neurons} in layer $i$ and the output dimension of the neurons in layer $i-1$ (with the input $x^0 \in \bbR^{m_0}$ considered to be the $0$-th layer). Furthermore, for each $i \in \llbracket s \rrbracket$ and $j \in \llbracket m_i \rrbracket$,  $\texttt{NL}^{i,j}:\bbR \to \bbR$ is some simple nonlinear \emph{activation function} that is fixed before training}, and $w^{i,j}$ and $b^{i,j}$ are the \emph{weights} and \emph{bias} of an affine function which is learned during the training procedure. In its simplest and most common form, the \blue{activation function} would be the rectified linear unit (ReLU), defined as $\ReLu{}(v) = \max\{0,v\}$. 

Many standard nonlinearities $\NL{}$, such as the ReLU, are \emph{piecewise linear}: that is, there exists a partition $\{S^i \subseteq D\}_{i=1}^d$ of the domain and affine functions $\{f^i\}_{i=1}^d$ such that $\NL{}(x) = f^i(x)$ for all $x \in S^i$.
If all nonlinearities describing $\NN{}$ are piecewise linear, then the entire network $\NN{}$ is piecewise linear as well, \blue{and conversely, any continuous piecewise linear function can be represented by a ReLU neural network~\cite{arora2016understanding}}. \blue{Beyond the ReLU, we also investigate activation functions that can be expressed as the maximum of finitely many affine functions, a class which includes certain common operations such as max pooling and reduce max.}

There are numerous contexts in which one may want to solve an optimization problem containing a trained neural network such as $\NN{}$. For example, this paradigm arises in deep reinforcement learning problems with high dimensional action spaces and where any of the cost-to-go function, immediate cost, or the state transition functions are learned by a neural network~\cite{Arulkumaran:2017,Dulac-Arnold:2015,Mladenov:2017,Ryu:2019,Say:2017,Wu:2017}. \blue{More generally, this approach may be used to approximately optimize learned functions that are difficult to model explicitly~\cite{Grimstad:2019,Lombardi:2018,Lombardi:2017empirical}.} Alternatively, there has been significant recent interest in verifying the robustness of trained neural networks deployed in systems like self-driving cars that are incredibly sensitive to unexpected behavior from the machine learning model~\cite{Carlini:2016,Papernot:2016,Szegedy:2013}. Relatedly, a string of recent work has used optimization over neural networks trained for visual perception tasks to \emph{generate} new images which are ``most representative'' for a given class~\cite{Olah:2017}, are ``dreamlike''~\cite{Mordvintsev:2015}, or adhere to a particular artistic style via neural style transfer~\cite{Gatys:2015}.

\subsection{MIP formulation preliminaries} \label{sec:preliminaries}

In this work, we study mixed-integer programming (MIP) approaches for optimization problems containing trained neural networks. In contrast to heuristic or local search methods often deployed for the applications mentioned above, MIP offers a framework for producing provably optimal solutions. This is particularly important, for example, in verification problems, where rigorous dual bounds can guarantee robustness in a way that purely primal methods cannot.

\blue{Let $f:D\subseteq \bbR^\eta \to \bbR$ be a $\eta$-variate affine function with domain $D$, $\NL{}:\bbR\to\bbR$ be an univariate nonlinear activation function, and $\circ$ be the standard function composition operator so that  $(\NL{} \circ f)(x) = \NL{}(f(x))$. Functions of the form $(\NL{} \circ f)(x)$ precisely describe an individual neuron, with $\eta$ inputs and one output. A standard way to model a function $g:D\subseteq \bbR^\eta\to \bbR$ using MIP is to construct a formulation of its graph defined by $\gr(g; D) \defeq \Set{ \left(x,y\right)\in \bbR^\eta\times \bbR | x \in D, \: y=g(x) }$.}
We \blue{therefore} focus on constructing MIP formulations for the \emph{graph} of individual neurons:
\begin{equation} \label{eqn:single-neuron}
\gr(\NL{} \circ f; D) \defeq \Set{ \left(x,\blue{y}\right)\blue{\in \bbR^\eta\times \bbR } | x \in D,\:  \blue{y=(\NL{} \circ f) (x)} },
\end{equation}
 because\blue{, as we detail at the end of this section,} we can readily produce a MIP formulation for the entire network as the composition of formulations for each individual neuron. 
\begin{definition} \label{def:ideal}
    Throughout, we will notationally use the convention that $x \in \bbR^\eta$ \blue{and $y \in \bbR$ are respectively the \emph{input} and  \emph{output} variables of the function $g:D\subseteq \bbR^\eta\to \bbR$ we are modeling. In addition, $v \in \bbR^h$ and $z \in \bbR^q$ are respectively any auxiliary continuous and binary variables used to construct a formulation of $\gr(g; D)$}. The orthogonal projection operator $\Proj$ will be subscripted by the variables to project onto, e.g. $\Proj_{x,y}(R) = \Set{(x,y) | \exists v, z \text{ s.t. } (x,y,v,z) \in R}$ is the orthogonal projection of $R$ onto the ``original space'' of $x$ and $y$ variables. We denote by $\text{ext}(Q)$ the set of extreme points of a polyhedron $Q$.
    
    Take  $S\defeq\gr(g; D) \subseteq \bbR^{\eta+1}$ \blue{to be the set} we want to model, and a polyhedron $Q \subset \bbR^{\eta + 1 + \blue{h} + q}$. Then:
    \begin{itemize}
        \item A \emph{(valid) mixed-integer programming (MIP) formulation} of $S$ consists of the linear constraints on $(x,y,v,z)\in\bbR^{\eta + 1 + \blue{h} + q}$ which define a polyhedron $Q$, combined with the integrality constraints $z \in \{0,1\}^q$, such that
        \[
            S = \Proj_{x,y}\left(Q \cap (\bbR^{\eta + 1 + \blue{h}} \times \{0,1\}^q)\right).
        \]
        We refer to $Q$ as the \emph{linear programming (LP) relaxation} of the formulation.
        \item A MIP formulation is \emph{sharp} if $\Proj_{x,y}(Q) = \Conv(S)$.
        \item A MIP formulation is \emph{hereditarily sharp} if fixing any subset of binary variables $z$ to 0 or 1 preserves sharpness.
        \item A MIP formulation is \emph{ideal} (or \emph{perfect}) if $\ext(Q) \subseteq \bbR^{\eta + 1 + \blue{h}} \times \{0,1\}^q$.
        \item The \emph{separation} problem for a family of inequalities is to find a valid inequality violated by a given point or certify that no such inequality exists.
        \item An inequality is \emph{valid} for the formulation if each integer feasible point in $\ubar{Q} = \Set{(x,y,v,z) \in Q | z \in \{0,1\}^q}$ satisfies the inequality. Moreover, a valid inequality is \emph{facet-defining} if the dimension of the points in $\ubar{Q}$ satisfying the inequality at equality is exactly one less than the dimension of $\ubar{Q}$ itself.
    \end{itemize}
\end{definition}

Note that ideal formulations are sharp, but the converse is not necessarily the case~\cite[Proposition 2.4]{Vielma:2015}.  In this sense, ideal formulations offer the tightest possible relaxation, and the integrality property in Definition~\ref{def:ideal} tends to lead to superior computational performance. Furthermore, note that a hereditarily sharp formulation is a formulation which retains its sharpness at every node in a branch-and-bound tree, and as such is potentially superior to a formulation which only guarantees sharpness at the root node~\cite{Jeroslow:1984,Jeroslow:1988c}. Additionally, it is important to keep in mind that modern MIP solvers will typically require an explicit, finite list of inequalities defining $Q$.

\blue{Finally, for each  $i \in \llbracket s \rrbracket$ and $j \in \llbracket m_i \rrbracket$ let  
$Q_{i,j}\subseteq\bbR^{m_{i-1} + 1 + h_{i,j} + q_{i,j}}$ be a polyhedron such that $\gr(\NL^{i,j}{} \circ f^{i,j}; D_{i,j}) = \Proj_{x,y}\left(Q_{i,j} \cap (\bbR^{m_{i-1} + 1 + h_{i,j}} \times \{0,1\}^{ q_{i,j}})\right)$, where each $D_{i,j}$ is the domain of neuron $\NL^{i,j}$ and $f^{i,j}(x) = w^{i,j} \cdot x + b^{i,j}$ is the learned affine function in network \eqref{eqn:simple-feedforward-network}. Then, a formulation for the complete network \eqref{eqn:simple-feedforward-network} is given by 
\[ \left(x^{i-1},x^i_j,v^{i,j},z^{i,j}\right) \in \left(Q_{i,j} \cap (\bbR^{m_{i-1} + 1 + h_{i,j}} \times \{0,1\}^{ q_{i,j}})\right)\quad \forall i \in \llbracket s \rrbracket,\; j \in \llbracket m_i \rrbracket.\]
Any properties of the individual-neuron formulations $Q_{i,j}$ (e.g. being ideal), are not necessarily preserved for the combined formulation for the complete network. However, for similarly sized-formulations, combining stronger formulations (e.g. ideal instead of sharp) usually leads to complete formulations that are more computationally effective \cite{Vielma:2015}. Hence, our focus for all theoretical properties of the formulations will be restricted to individual-neuron formulations. }



\subsection{Our contributions}

We highlight the contributions of this work as follows.
\begin{enumerate}
    \item \emph{Generic recipes for building strong MIP formulations of the maximum of $d$ affine functions for any bounded polyhedral input domain.}
    \begin{itemize}
        \item \textbf{[Propositions~\ref{prop:max_ideal_set_primal} and \ref{prop:max_ideal_set_dual}]} We derive both primal and dual characterizations for ideal formulations via the \emph{Cayley embedding}.
        \item \textbf{[Propositions~\ref{prop:max_sharp_set_primal} and \ref{prop:max_sharp_set_dual}]} We relax the Cayley embedding in a particular way, and use this to derive simpler primal and dual characterizations for (hereditarily) sharp formulations.
        \item We discuss how to separate both dual characterizations via subgradient descent.
    \end{itemize}
    \item \emph{Simplifications for common special cases.}
    \begin{itemize}
        \item \textbf{[Corollary~\ref{cor:max_2_ideal_general}]} We show the equivalence of the ideal and sharp characterizations when $d=2$ \blue{(i.e. the maximum of two affine functions)}.
        \item \textbf{[Proposition~\ref{prop:transportationDuality}]} We show that, if the input domain is a product of simplices, the separation problem of the sharp formulation can be reframed as a series of transportation problems.
        \item \textbf{[Corollaries~\ref{cor:simplicesD2Result} and \ref{cor:simplicesP2}, and Proposition~\ref{prop:simplicesD2FacetDefining}]} When the input domain is a product of simplices, and either (1) $d=2$, or (2) each simplex is two-dimensional, we provide an explicit, finite description for the sharp formulation. Furthermore, none of these inequalities are redundant.
    \end{itemize}
    \item \emph{Application of these results to construct MIP formulations for neural network nonlinearities.}
    \begin{itemize}
        \item \textbf{[Propositions~\ref{prop:ideal-relu} and \ref{prop:relu-separation}]} We derive an explicit ideal formulation for the ReLU nonlinearity over a box input domain, the most common case. Separation over this ideal formulation can be performed in time linear in the input dimension.
        \item \textbf{[Corollary~\ref{cor:simplicesD2ReLU}]} We derive an explicit ideal formulation for the ReLU nonlinearity where some (or all) of the input domain is \emph{one-hot encoded} categorical or discrete data. Again, the separation can be performed efficiently, and none of the inequalities are redundant.
        \item \textbf{[Propositions~\ref{prop:max_sharp} and~\ref{prop:max_sharp_sep}]} We present an explicit hereditarily sharp formulation for the maximum of $d$ affine functions over a box input domain, and provide an efficient separation routine. Moreover, a subset of the defining constraints serve as a tightened big-$M$ formulation.
        \item \textbf{[Proposition~\ref{prop:ideal-leaky-relu}]} We produce similar results for more exotic neurons: the leaky ReLU and the clipped ReLU (see the online supplement).
    \end{itemize}
    \item \emph{Computational experiments on verification problems arising from image classification networks trained on the MNIST digit data set.}
    \begin{itemize}
        \item We observe that our new formulations, along with just a few rounds of separation over our families of cutting planes, can improve the solve time of Gurobi on verification problems by orders of magnitude.
    \end{itemize}
\end{enumerate}
Our contributions are depicted in Figure~\ref{fig:results}.  It serves as a roadmap of the paper.

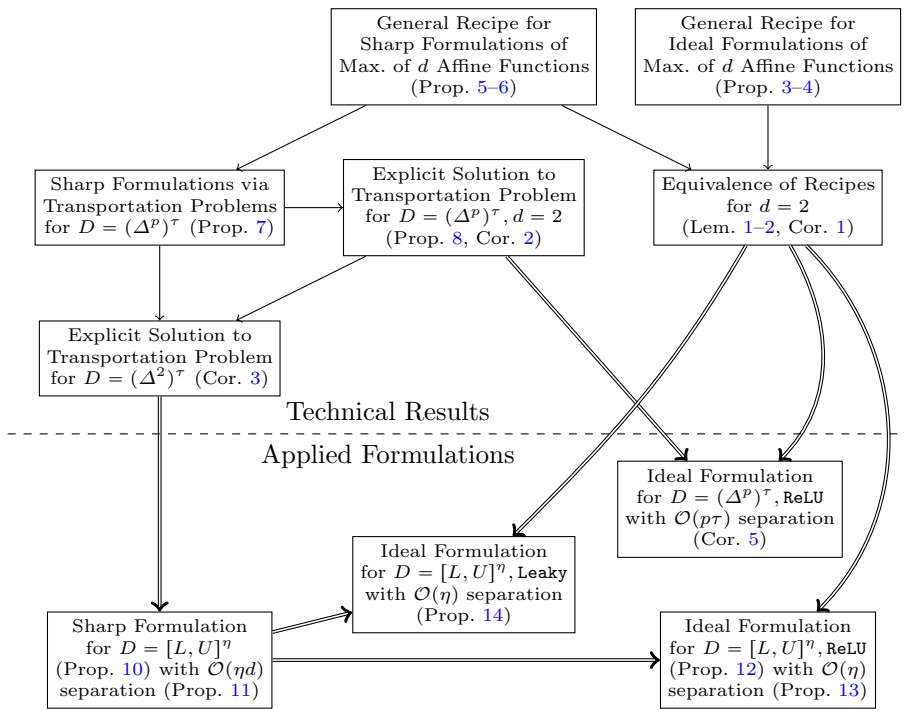
\begin{figure}
\centering
\begin{tikzpicture}[font=\scriptsize]
\node[draw, align=center] (A) at (8,2) {General Recipe for \\ Ideal Formulations of \\ Max.~of $d$ Affine Functions\\ (Prop.~\ref{prop:max_ideal_set_primal}--\ref{prop:max_ideal_set_dual})};
\node[draw, align=center] (B) at (4,2) {General Recipe for \\ Sharp Formulations of \\ Max.~of $d$ Affine Functions\\ (Prop.~\ref{prop:max_sharp_set_primal}--\ref{prop:max_sharp_set_dual})};
\node[draw, align=center] (C) at (8,0) {Equivalence of Recipes \\ for $d=2$ \\ (Lem.~\ref{lem:max_2_ideal_lb}--\ref{lem:max_2_ideal_ub}, Cor.~\ref{cor:max_2_ideal_general})};
\draw[->](A)--(C);
\draw[->](B)--(C);
\node[draw, align=center] (D) at (0,0) {Sharp Formulations via \\ Transportation Problems \\ for $D=(\Delta^p)^{\tau}$ (Prop.~\ref{prop:transportationDuality})};
\draw[->](B)--(D);
\node[draw, align=center] (E) at (4,0) {Explicit Solution to \\ Transportation Problem \\ for $D=(\Delta^p)^{\tau},d=2$ \\ (Prop.~\ref{prop:simplicesD2Explicit}, Cor.~\ref{cor:simplicesD2Result})};
\draw[->](D)--(E);
\node[draw, align=center] (H) at (0,-2) {Explicit Solution to \\ Transportation Problem \\ for $D=(\Delta^2)^{\tau}$ (Cor.~\ref{cor:simplicesP2})};
\draw[->](D)--(H);
\draw[->](E)--(H);
\node[draw, align=center] (F) at (7.5,-4) {Ideal Formulation \\ for $D=(\Delta^p)^{\tau},\ReLu$ \\ with $\mathcal{O}(p\tau)$ separation \\ (Cor.~\ref{cor:simplicesD2ReLU})};
\draw[->, double] (C) to [out=300,in=45] (F);
\draw[->, double](E)--(F);
\node[draw, align=center] (G) at (0,-6) {Sharp Formulation \\ for $D=[L,U]^{\eta}$ \\ (Prop.~\ref{prop:max_sharp}) with $\mathcal{O}(\eta d)$ \\ separation (Prop.~\ref{prop:max_sharp_sep})};
\draw[->, double](H)--(G);
\node[draw, align=center] (I) at (4,-5) {Ideal Formulation \\ for $D=[L,U]^{\eta},\Leaky$ \\ with $\mathcal{O}(\eta)$ separation \\ (Prop.~\ref{prop:ideal-leaky-relu})};
\draw[->, double](C) to [out=240,in=45] (I);
\node[draw, align=center] (J) at (8,-6) {Ideal Formulation \\ for $D=[L,U]^{\eta},\ReLu$ \\ (Prop.~\ref{prop:ideal-relu}) with $\mathcal{O}(\eta)$ \\ separation (Prop.~\ref{prop:relu-separation})};
\draw[->, double](C) to [out=315,in=45] (J);
\draw[->, double](G)--(I);
\draw[->, double](G)--(J);
\node at (3,-2.7) {\normalsize Technical Results};
\node at (3,-3.3) {\normalsize Applied Formulations};
\draw[dashed](-2,-3)--(10,-3);
\end{tikzpicture}
\caption{A roadmap of our results.  The single arrows depict dependencies between our technical results, while the double arrows depict the way in which we use these results to establish our applied formulations in the paper.
Notationally, \blue{$d$ is the number of pieces of the piecewise linear function,} $[L,U]^\eta$ is an $\eta$-dimensional box, and $(\Delta^p)^\tau$ refers to a product of $\tau$ simplices, each with $p$ components.} 
\label{fig:results}
\end{figure}



\subsection{Relevant prior work}\label{sec:lit-review}

In recent years a number of authors have used MIP formulations to model trained neural networks~\cite{Bunel:2017,Cheng:2017,Dutta:2017,Fischetti:2018,Huchette:2018,Kumar:2019,Lomuscio:2017,Say:2017,Serra:2018a,Serra:2018,Tjeng:2017,Wu:2017,Xiao:2018}, mostly applying big-$M$ formulation techniques to ReLU-based networks.
When applied to a single neuron of the form \eqref{eqn:single-neuron}, these big-$M$ formulations will not be ideal or offer an exact convex relaxation; see Example~\ref{ex:relu-big-M} for an illustration. Additionally, a stream of literature in the deep learning community has studied convex relaxations~\cite{Bastani:2016,Dvijotham:2018a,Ehlers:2017,Raghunathan:2018,Salman:2019}, primarily for verification tasks.
Moreover, some authors have investigated how to use convex relaxations within the training procedure in the hopes of producing neural networks with a priori robustness guarantees~\cite{Dvijotham:2018,Wong:2017,Wong:2018}.

\blue{The usage of mathematical programming in deep learning, specifically for training predictive models, has also been investigated in \cite{bienstock2018principled}.}
Beyond \blue{mathematical programming} and convex relaxations, a number of authors have investigated other algorithmic techniques for modeling trained neural networks in optimization problems, drawing primarily from the satisfiability, constraint programming, and global optimization communities~\cite{Bartolini:2011,Bartolini:2012,Katz:2017,Lombardi:2016,Schweidtmann:2018}.
\blue{Throughout this stream of the literature, there is discussion of the performance for specific subclasses of neural network models, including}
 binarized~\cite{Khalil:2018} or input convex~\cite{Amos:2017} neural networks.
\blue{Improving our formulations for specific subclasses of neural networks would constitute interesting future work.}

\blue{Outside of applications in machine learning}, the formulations presented in this paper also have connections with existing structures studied in the MIP and constraint programming communities, like indicator variables, on/off constraints, and convex envelopes~\cite{Atamturk:2018,Belotti:2015,Bonami:2015,Hijazi:2011,Hijazi:2014,Tawarmalani:2002}.
\blue{Finally, this paper has connections with distributionally robust optimization, in that the primal-dual pair of sharp formulations presented can be viewed as equivalent to the discrete sup-sup duality result \cite{chen2018distributionally} for the marginal distribution model \cite{haneveld1986robustness,natarajan2009persistency,Weiss}.}

\subsection{Starting assumptions and notation} \label{sec:assumptions}
We use the following notational conventions throughout the paper.
\begin{itemize}
    \item The nonnegative orthant: $\bbR_{\geq 0} \defeq \Set{x \in \bbR | x \geq 0}$.
    \item The \blue{$n$}-dimensional simplex: $\Delta^{\blue{n}} \defeq \Set{ x \in \bbR^{\blue{n}}_{\geq 0} | \sum_{i=1}^{\blue{n}} x_i = 1}$.
    \item The set of integers from $1$ to $n$: $\llbracket n \rrbracket = \{1, \ldots, n\}$.
    \item ``Big-$M$'' coefficients: $M^+(f;D) \defeq \max_{x \in D} f(x)$ and $M^-(f;D) \defeq \min_{x \in D} f(x)$.
    \item The dilation of a set: if $z \in \bbR_{\geq 0}$ and $D \subseteq \bbR^\eta$, then $z \cdot D \defeq \Set{ z x | x \in D}$.\footnote{Note that if $D = \Set{x \in \mathbb{R}^\eta | Ax \leq b}$ is polyhedral, then $z \cdot D = \Set{x \in \mathbb{R}^\eta | Ax \leq bz}$.}
\end{itemize}

Furthermore, we will make the following simplifying assumptions.

\begin{assumption}
    The input domain $D$ is a bounded polyhedron.
\end{assumption}

While a bounded input domain assumption will make the formulations and analysis considerably more difficult than the unbounded setting (see \cite{Atamturk:2018} for a similar phenomenon), it ensures that standard MIP representability conditions are satisfied (e.g. \cite[Section 11]{Vielma:2015}). Furthermore, variable bounds are natural for many applications (for example in verification problems), and are absolutely essential for ensuring reasonable dual bounds.

\begin{assumption} \label{ass:amphibious}
    Each neuron is \emph{irreducible}: for any $k \in \llbracket d \rrbracket$, there exists some $x \in D$ where $f^k(x) > f^\ell(x)$ for each $\ell \neq k$.
\end{assumption}

Observe that if a neuron is not irreducible, this means that it is unnecessarily complex, and one or more of the affine functions can be completely removed. Moreover, the assumption can be verified in polynomial time via $d$ LPs by checking if $\max_{x,\Delta} \Set{\Delta | x \in D,\ \Delta \leq f^k(x) - f^\ell(x)\ \forall \ell \neq k} > 0$ for each $k \in \llbracket d \rrbracket$. In the special case where $d = 2$ (e.g. ReLU) and $D$ is a box, this can be done in linear time.  Finally, if the assumption does not hold, it will not affect the validity of the formulations or cuts derived in this work, though certain results pertaining to non-redundancy or facet-defining properties may no longer hold.

\section{Motivating example: The ReLU nonlinearity over a box domain} \label{sec:relu}

\blue{
Since the work of Glorot et al.~\cite{glorot2011deep}, the ReLU neuron has become the workhorse of deep learning models. Despite the simplistic ReLU structure, neural networks formed by ReLU neurons are easy to train, reason about, and can model any continuous piecewise linear function \cite{arora2016understanding}.
Moreover, they can approximate any continuous, non-linear function to an arbitrary precision under a bounded width \cite{hanin2017universal}.

Accordingly, the general problem of maximizing (or minimizing) the output of a trained ReLU network is NP-hard \cite{Katz:2017}.
Nonetheless, in this section we present the strongest possible MIP formulations for a single ReLU neuron without continuous auxiliary variables. As discussed at the end of Section~\ref{sec:preliminaries} and corroborated in the computational study in Section~\ref{sec:computational}, this leads to faster solve times for the entire ReLU network in practice.
}

\subsection{A big-$M$ formulation} \label{ssec:relu-standard-formulations}

To start, we will consider the ReLU in the simplest possible setting: where the input is univariate. Take the two-dimensional set $\gr(\ReLu{}; [l,u])$, where $[l,u]$ is some interval in $\bbR$ containing zero. It is straightforward to construct an ideal formulation for this univariate ReLU.

\begin{proposition}
    An ideal formulation for $\gr(\emph{\ReLu{}}; [l,u])$ is:
\begin{subequations} \label{eqn:univariate-max-big-M}
\begin{align}
    y &\geq x \label{eqn:univariate-max-big-M-1} \\
    y &\leq x - l(1-z) \\
    y &\leq uz \\
    (x,y,z) &\in \bbR \times \bbR_{\geq 0} \times [0,1] \label{eqn:univariate-max-big-M-4} \\
    z &\in \{0,1\}.
\end{align}
\end{subequations}
\end{proposition}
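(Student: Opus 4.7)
The plan is to split the argument into two familiar pieces: (i) \emph{validity}, meaning that the integer-feasible points of \eqref{eqn:univariate-max-big-M}, projected onto $(x,y)$, coincide with $\gr(\ReLu{}; [l,u])$; and (ii) \emph{idealness}, meaning that every extreme point of the LP relaxation $Q$ satisfies $z \in \{0,1\}$.

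For validity, I would substitute each value $z \in \{0,1\}$ into the constraints \eqref{eqn:univariate-max-big-M-1}--\eqref{eqn:univariate-max-big-M-4}. The $z=0$ slice forces $y=0$ with $l \leq x \leq 0$, whereas the $z=1$ slice forces $y=x$ with $0 \leq x \leq u$. The union of these two segments is precisely $\gr(\ReLu{}; [l,u])$, so validity is immediate.

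For idealness, the cleanest route is to identify four candidate vertices $p_1 \defeq (l,0,0)$, $p_2 \defeq (0,0,0)$, $p_3 \defeq (0,0,1)$, $p_4 \defeq (u,u,1)$, all of which lie in $Q$ and have integer $z$, and then prove $Q = \Conv\{p_1,p_2,p_3,p_4\}$. The inclusion $\Conv\{p_i\} \subseteq Q$ is immediate. For the reverse, given arbitrary $(x,y,z) \in Q$ I would set
\[
\lambda_4 = y/u, \quad \lambda_1 = (x-y)/l, \quad \lambda_3 = z - y/u, \quad \lambda_2 = 1 - \lambda_1 - \lambda_3 - \lambda_4,
\]
and verify directly that $\sum_i \lambda_i = 1$ and $\sum_i \lambda_i p_i = (x,y,z)$. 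The four nonnegativity conditions $\lambda_j \geq 0$ then correspond one-to-one with the four defining inequalities of the LP relaxation (namely $y \geq 0$, $y \geq x$, $y \leq uz$, and $y \leq x - l(1-z)$), so $(x,y,z) \in \Conv\{p_i\}$. Since the extreme points of the convex hull of finitely many points are a subset of those points, every extreme point of $Q$ has integer $z$, establishing idealness.

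The only mild subtlety I foresee is the degenerate case $l = 0$ or $u = 0$, where one of the two segments collapses to a point and the $\lambda$-formulas are ill defined. In that case two of the $p_i$ coincide and the formulation simply becomes lower-dimensional, so the same argument carries through on a slice (or via a limiting argument). I do not anticipate any substantive obstacle.
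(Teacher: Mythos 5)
Your proof is correct, but it takes a more explicit route than the paper, which simply notes that the claim ``follows from inspection, or as a special case of Proposition~\ref{prop:ideal-relu}'' (the general ideal formulation for the $\eta$-variate ReLU over a box, itself obtained from the Cayley-embedding machinery; for $\eta=1$ the exponential family \eqref{eqn:ideal-single-relu-2} collapses to exactly the two upper bounds $y \leq x - l(1-z)$ and $y \leq uz$). What you do instead is give a self-contained V-representation argument: you verify validity by slicing at $z=0$ and $z=1$, and you prove idealness by exhibiting the four candidate vertices $(l,0,0)$, $(0,0,0)$, $(0,0,1)$, $(u,u,1)$ together with closed-form convex multipliers $\lambda_1=(x-y)/l$, $\lambda_4=y/u$, $\lambda_3=z-y/u$, $\lambda_2=1-\lambda_1-\lambda_3-\lambda_4$, whose nonnegativity is equivalent, one-to-one, to the four nontrivial inequalities of the LP relaxation (note your argument uses $l\leq 0\leq u$ throughout, and strictness $l<0<u$ for the division, which you correctly flag as the only degenerate case). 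Your approach buys an elementary, machinery-free certificate that the relaxation is exactly $\Conv\{p_1,\dots,p_4\}$, which is arguably more informative than ``inspection''; the paper's approach buys brevity and uniformity, since the univariate case is subsumed by the general box-domain result and needs no separate argument. Either way the conclusion and the underlying geometry (two segments whose convex hull in $(x,y,z)$-space has only integral-$z$ vertices) are the same.
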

\begin{proof}
    Follows from inspection, or as a special case of Proposition~\ref{prop:ideal-relu} to be presented in Section~\ref{ssec:relu-box}.
\qed \end{proof}

A more realistic setting would have an \blue{$\eta$-variate} ReLU nonlinearity whose input is some \blue{$\eta$-variate} affine function $f : [L,U] \to \bbR$ \blue{where $L,U\in \bbR^\eta$ and $[L,U]\defeq\Set{ x \in \bbR^\eta | L_i \leq  x_i \leq U_i\  \forall i\in \llbracket \eta \rrbracket} $} \blue{(i.e. the $\eta$-variate ReLU nonlinearity given by $\ReLu{} \circ f$)}. The \blue{ \emph{box-input}  $[L,U]$} corresponds to known (finite) bounds on each component, which can typically be efficiently computed via interval arithmetic or other standard methods.

Observe that we can model the \blue{graph} of the  \emph{\blue{$\eta$-variate} ReLU \blue{neuron}} as a simple composition of \blue{the graph of} a univariate ReLU \blue{activation function} and an \blue{$\eta$-variate} affine function:
\begin{equation}
  \blue{\gr\left(\ReLu{} \circ f;[L,U]\right)} =  \Set{ (x,y)\blue{\in\bbR^{\eta+1}} | \begin{array}{c}
        (f(x),y) \in \gr\left(\ReLu{}; [\blue{m^-,m^+}]\right) \\
        L \leq x \leq U
    \end{array}
    },
\end{equation}
\blue{where $m^-\defeq M^-(f; [L,U])$ and $m^+\defeq M^+(f; [L,U])$}.
Using formulation~\eqref{eqn:univariate-max-big-M} as a submodel, we can write a formulation for the ReLU over a box domain as:
\begin{subequations} \label{eqn:relu-big-M}
\begin{align}
    y &\geq f(x) \\
    y &\leq f(x) - M^-(f;[L,U]) \cdot (1-z) \label{eqn:relu-big-M-2} \\
    y &\leq M^+(f;[L,U]) \cdot z \label{eqn:relu-big-M-3} \\
    (x,y,z) &\in [L,U] \times \bbR_{\geq 0} \times [0,1] \\
    z &\in \{0,1\}.
\end{align}
\end{subequations}
This is the approach taken recently in the bevy of papers referenced in Section~\ref{sec:lit-review}. Unfortunately, after the composition with the affine function $f$ over a box input domain, this formulation is no longer sharp.

\begin{example} \label{ex:relu-big-M}
    If $f(x) = x_1 + x_2 - 1.5$, formulation \eqref{eqn:relu-big-M} for $\gr(\ReLu{} \circ f; [0,1]^2)$ is
    \begin{subequations}
    \begin{align}
        y &\geq x_1 + x_2 - 1.5 \label{eqn:relu-example-1} \\ 
        y &\leq x_1 + x_2 - 1.5 + 1.5(1-z) \\
        y &\leq 0.5z \\
        (x,y,z) &\in [0,1]^2\times \bbR_{\geq 0} \times [0,1] \label{eqn:relu-example-5} \\
        z &\in \{0,1\}.
    \end{align}
    \end{subequations}
    The point $(\hat{x},\hat{y},\hat{z}) = ((1,0),0.25,0.5)$ is feasible for the LP relaxation (\ref{eqn:relu-example-1}-\ref{eqn:relu-example-5}). However, observe that the inequality $y \leq 0.5x_2$ is valid for $\gr(\ReLu{} \circ f; [0,1]^2)$, but is violated by $(\hat{x},\hat{y})$. Therefore, the formulation does not offer an exact convex relaxation (and, hence, is not ideal). See Figure~\ref{fig:relu} for an illustration: on the left, of the big-$M$ formulation projected to $(x,y)$-space, and on the right, the tightest possible convex relaxation.
\end{example}

\tikzstyle{yzx} = [
  x={(1.2*.9625cm, 1.2*.9625cm)},
  y={(1.2*2.5cm, 0cm)},
  z={(0cm, 1.2*2.5cm)},
]

\begin{figure}
    \centering
    \begin{tikzpicture}[yzx]
        \draw [->, dashed, line width=1] (0,0,0) -- (1.2,0,0);
        \draw [->, dashed, line width=1] (0,0,0) -- (0,1.2,0);
        \draw [->, dashed, line width=1] (0,0,0) -- (0,0,0.7);
        \node[above right] at (1.2,-.075,0) {$x_2$};
        \node[right] at (0,1.2,0) {$x_1$};
        \node[above] at (0,0,.7) {$y$};
        \coordinate (LL) at (0,0,0);
        \coordinate (UL) at (1,0,0);
        \coordinate (LU) at (0,1,0);
        \coordinate (UU) at (1,1,0.5);
        \coordinate (UM) at (1,0.5,0);
        \coordinate (MU) at (0.5,1,0);
        \coordinate (E1) at (1,0,1/4);
        \coordinate (E2) at (0,1,1/4);

        \draw [fill=gray!80] (LL) -- (UL) -- (UM) -- (MU) -- (LU) -- cycle;
        \draw [fill=gray!80] (UU) -- (UM) -- (MU) -- cycle;
        \draw (LL) -- (E2) -- (UU) -- (E1) -- cycle;
        \draw (LL) -- (UL) -- (E1) -- cycle;
        \draw (LL) -- (LU) -- (E2) -- cycle;
        
        \begin{scope}[canvas is yz plane at x=0]
            \draw [fill] (1,1/4) circle [radius=.025];
        \end{scope}
    \end{tikzpicture} \hspace{2em}
    \begin{tikzpicture}[yzx]
        \draw [->, dashed, line width=1] (0,0,0) -- (1.2,0,0);
        \draw [->, dashed, line width=1] (0,0,0) -- (0,1.2,0);
        \draw [->, dashed, line width=1] (0,0,0) -- (0,0,0.7);
        \node[above right] at (1.2,-0.075,0) {$x_2$};
        \node[right] at (0,1.2,0) {$x_1$};
        \node[above] at (0,0,.7) {$y$};
        \coordinate (LL) at (0,0,0);
        \coordinate (UL) at (1,0,0);
        \coordinate (LU) at (0,1,0);
        \coordinate (UU) at (1,1,0.5);
        \coordinate (UM) at (1,0.5,0);
        \coordinate (MU) at (0.5,1,0);
        
        \draw [fill=gray!80] (LL) -- (UL) -- (UM) -- (MU) -- (LU) -- cycle;
        \draw [fill=gray!80] (UU) -- (UM) -- (MU) -- cycle;
        \draw (LL) -- (UL) -- (UU) -- cycle;
        \draw (LL) -- (LU) -- (UU) -- cycle;
        
        \begin{scope}[canvas is yz plane at x=0]
            \draw [fill] (1,1/4) circle [radius=.025];
        \end{scope}
    \end{tikzpicture}
    \caption{For $f(x) = x_1 + x_2 - 1.5$: \textbf{(Left)} $\Conv(\gr(\ReLu{} \circ f; [0,1]^2))$, and \textbf{(Right)} the projection of the big-$M$ formulation \eqref{eqn:relu-big-M} to $(x,y)$-space, where we mark the point $(x,y) = ((1,0),0.25)$ that is not in the convex hull, but is valid for the projection of the big-$M$ LP relaxation (\ref{eqn:relu-example-1}-\ref{eqn:relu-example-5}).}
    \label{fig:relu}
\end{figure}
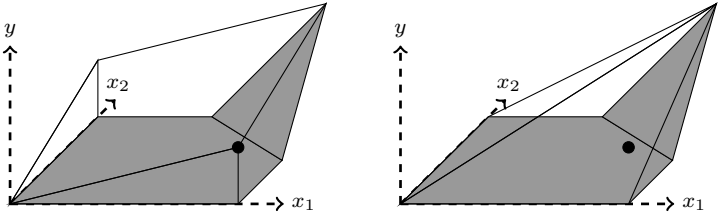

Moreover, the integrality gap of \eqref{eqn:relu-big-M} can be arbitrarily bad, even in fixed dimension $\eta$.

\begin{example}
Fix $\gamma \in \bbR_{\geq 0}$ and even $\eta \in \bbN$.
Take the affine function $f(x) = \sum_{i=1}^\eta x_i$, the input domain $[L,U] = [-\gamma,\gamma]^\eta$, and $\hat{x} = \gamma \cdot (1,-1,\cdots,1,-1)$ as a scaled vector of alternating positive and negative ones. We can check that $(\hat{x},\hat{y},\hat{z}) = (\hat{x},\frac{1}{2}\gamma\eta,\frac{1}{2})$ is feasible for the LP relaxation of the big-$M$ formulation \eqref{eqn:relu-big-M}. Additionally, $f(\hat{x}) = 0$, and for any $\tilde{y}$ such that $(\hat{x},\tilde{y}) \in \Conv(\gr(\ReLu{} \circ f; [L,U]))$, then $\tilde{y} = 0$ necessarily. Therefore, there exists a fixed point $\hat{x}$ in the input domain where the tightest possible convex relaxation (for example, from a sharp formulation) is exact, but the big-$M$ formulation deviates from this value by at least $\frac{1}{2}\gamma\eta$.
\end{example}

Intuitively, this example suggests that the big-$M$ formulation can be particularly weak around the boundary of the input domain, as it cares only about the value $f(x)$ of the affine function, and not the particular input value $x$.

\subsection{An ideal extended formulation} \label{ssec:ideal-extended}

It is possible to produce an ideal \emph{extended} formulation for the ReLU neuron by introducing a modest number of auxiliary continuous variables:
\begin{subequations} \label{eqn:relu-ideal-extended}
\begin{align}
    (x,y) &= (x^0,y^0) + (x^1,y^1) \label{eqn:relu-ideal-extended-1} \\
    y^0 &= 0 \geq w \cdot x^0 + b(1-z) \label{eqn:relu-ideal-extended-2} \\
    y^1 &= w \cdot x^1 + bz \geq 0 \label{eqn:relu-ideal-extended-3} \\
    L(1-z) &\leq x^0 \leq U(1-z)\label{eqn:relu-ideal-extended-4} \\
    Lz &\leq x^1 \leq Uz \label{eqn:relu-ideal-extended-5} \\
    z &\in \{0,1\},
\end{align}
\end{subequations}
This is the standard ``multiple choice'' formulation for piecewise linear functions~\cite{Vielma:2009a}, which can also be derived from techniques due to Balas~\cite{Balas:1985,Balas:1998}.

Although the multiple choice formulation offers the tightest possible convex relaxation for a single neuron, it requires a copy $x^0$ of the input variables (the copy $x^1$ can be eliminated using the equations \eqref{eqn:relu-ideal-extended-1}). This means that when this formulation is applied to every neuron in the network to formulate $\NN{}$, the total number of continuous variables required is $m_0 + \sum_{i=1}^{r} (m_{i-1}+1)m_{i}$, where $m_i$ is the number of neurons in layer $i$. In contrast, the big-$M$ formulation requires only $m_0 + \sum_{i=1}^r m_i$ continuous variables to formulate the entire network. As we will see in Section~\ref{sec:computational}, the quadratic growth in size of the extended formulation can quickly become burdensome. Additionally, a folklore observation in the MIP community is that multiple choice formulations tend to not perform as well as expected in simplex-based branch-and-bound algorithms, likely due to degeneracy introduced by the block structure of the formulation~\cite{Vielma:2018}.

\subsection{An ideal MIP formulation without auxiliary continuous variables} \label{ssec:ideal-relu}

In this work, our most broadly useful contribution is the derivation of an ideal MIP formulation for the ReLU nonlinearity over a box domain that is non-extended; that is, it does not require additional auxiliary variables as in formulation \eqref{eqn:relu-ideal-extended}. We informally summarize this main result as follows.


\medskip
\begin{theo*} \label{thm:informal}
    There exists an explicit ideal nonextended formulation for the  \blue{$\eta$-variate} ReLU nonlinearity over a box domain, i.e. it requires only a single auxiliary binary variable. It has an exponential (in $\eta$) number of inequality constraints, each of which are facet-defining. However, it is possible to separate over this family of inequalities in time scaling linearly in $\eta$.
\end{theo*}
\medskip

We defer the formal statement and proof to Section~\ref{ssec:relu-box}, for after we have derived the requisite machinery. This is also the main result for the extended abstract version of this work~\cite{Anderson:2019}, where it is derived through alternative means.


\section{Our general machinery: Formulations for the maximum of $d$ affine functions} \label{sec:max}

We will state our main structural results in the following generic setting. Take the maximum operator $\Max{}(v_1,\ldots,v_d) = \max_{i=1}^d v_i$ over $d$ scalar inputs. We study the composition of this nonlinearity with $d$ affine functions $f^i : D \to \bbR$ with $f^i(x) = w^i \cdot x + b^i$, all sharing some bounded polyhedral domain $D$:
\[
    \Smax \defeq \gr(\Max{} \circ (f^1,\ldots,f^d); D) \equiv \Set{ (x,y) \in D \times \bbR | y = \max_{i=1}^d f^i(x) },
\]
This setting subsumes the ReLU over box input domain presented in Section~\ref{sec:relu} as a special case with $d=2$, $f^2(x) = 0$, and $D = [L,U]$. It also covers a number of other settings arising in modern deep learning, either by making $D$ more complex (e.g. one-hot encodings for categorical features), or by increasing $d$ (e.g. max pooling neurons often used in convolutional networks for image classification tasks~\cite{Boureau:2010}, or in maxout networks~\cite{Goodfellow:2013}).

In this section, we present structural results that characterize the \emph{Cayley embedding}~\cite{Huber:2000,Vielma:2016,Vielma:2018,Weibel:2007} of $\Smax$. Take the set
\[
    \Scayley \defeq \bigcup_{k=1}^d \Set{ (x,f^k(x),{\bf e}^k) | x \in D_{|k}},
\]
where ${\bf e}^k$ is the unit vector where the $k$-th element is 1, and for each $k \in \llbracket d \rrbracket$,
\begin{align*}
D_{|k} &\defeq \Set{ x \in D | k \in \arg\max_{\ell=1}^d f^\ell(x)} \equiv \Set{x \in D | w^k \cdot x + b^k \geq w^\ell \cdot x + b^\ell \ \forall \ell \neq k }
\end{align*}
is the portion of the input domain $D$ where $f^k$ attains the maximum. The \emph{Cayley embedding} of $\Smax$ is the convex hull of this set: $\Rcayley \defeq \Conv(\Scayley)$.

The following straightforward observation holds directly from definition.

\begin{observation}
The set $\Rcayley$ is a bounded polyhedron, and an ideal formulation for $\Smax$ is given by the system $\Set{(x,y,z) \in \Rcayley | z \in \{0,1\}^d}$.
\end{observation}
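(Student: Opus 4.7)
The plan is to treat the two claims separately. First I would show that $\Rcayley$ is a bounded polyhedron by exhibiting it as the convex hull of a finite union of bounded polyhedra. For each $k \in \llbracket d \rrbracket$, $D_{|k}$ is the intersection of the bounded polyhedron $D$ with the finitely many half-spaces $(w^k - w^\ell) \cdot x + (b^k - b^\ell) \geq 0$ for $\ell \neq k$, so it is itself a bounded polyhedron. The set $\{(x, f^k(x), {\bf e}^k) : x \in D_{|k}\}$ is the image of $D_{|k}$ under an affine map, hence also a bounded polyhedron. Then $\Rcayley$ is the convex hull of the (finite) union of these bounded polyhedra, which is a standard consequence of disjunctive programming / Balas's union-of-polyhedra result to produce a bounded polyhedron (one can also see this by noting that $\Scayley$ is the union of finitely many polytopes and thus has finitely many extreme points, and $\Rcayley$ is their convex hull).

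Next I would establish that $\{(x,y,z) \in \Rcayley : z \in \{0,1\}^d\}$ is an ideal formulation of $\Smax$. For the ideality half of Definition~\ref{def:ideal}, I would invoke the elementary fact that every extreme point of $\Conv(A)$ (for any set $A$ that is a finite union of polytopes) must lie in $A$ itself; since every point of $\Scayley$ has $z$-component equal to some standard basis vector ${\bf e}^k$, it follows that $\ext(\Rcayley) \subseteq \bbR^{\eta + 1} \times \{0,1\}^d$, as required.

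For validity, I would argue both inclusions of
\[
    \Smax \;=\; \Proj_{x,y}\bigl(\Rcayley \cap (\bbR^{\eta + 1} \times \{0,1\}^d)\bigr).
\]
The $(\subseteq)$ direction is immediate: given $(x,y) \in \Smax$, pick any $k^* \in \arg\max_\ell f^\ell(x)$; then $x \in D_{|k^*}$ and $y = f^{k^*}(x)$, so $(x, y, {\bf e}^{k^*}) \in \Scayley \subseteq \Rcayley$ has binary $z$. For the $(\supseteq)$ direction, suppose $(x, y, z) \in \Rcayley$ with $z \in \{0,1\}^d$. Since $\Rcayley = \Conv(\Scayley)$ and $\Scayley$ is a union of $d$ polytopes, Carathéodory-type decomposition lets me write $(x, y, z) = \sum_{k=1}^d \lambda_k (x^k, f^k(x^k), {\bf e}^k)$ with $\lambda_k \geq 0$, $\sum_k \lambda_k = 1$, and $x^k \in D_{|k}$. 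Projecting onto $z$ gives $z = \sum_k \lambda_k {\bf e}^k$, and since $z \in \{0,1\}^d$ and the ${\bf e}^k$ are distinct standard basis vectors, exactly one $\lambda_{k^*}$ equals $1$ and the rest vanish. Hence $x = x^{k^*} \in D_{|k^*}$ and $y = f^{k^*}(x^{k^*}) = \max_\ell f^\ell(x)$, so $(x,y) \in \Smax$.

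There is no serious obstacle here; the only step requiring care is the assertion that the convex hull of a finite union of bounded polyhedra is itself a (bounded) polyhedron, which relies crucially on the standing assumption that $D$ is bounded so that the recession cones trivially coincide. Everything else is direct manipulation of the definitions.
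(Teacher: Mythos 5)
Your proof is correct, and it simply spells out the standard argument that the paper leaves implicit: the paper offers no proof of this Observation, asserting it ``holds directly from definition,'' and your three steps (polytopality of $\Rcayley$ as the convex hull of finitely many polytopes, $\ext(\Conv(\Scayley)) \subseteq \Scayley$ for idealness, and the one-point-per-piece convex decomposition forcing $z = {\bf e}^{k^*}$ for validity) are exactly the intended justification. No gaps.
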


Therefore, if we can produce an explicit inequality description for $\Rcayley$, we immediately have an ideal MIP formulation for $\Smax$. Indeed, we have already seen an extended representation in \eqref{eqn:relu-ideal-extended} when $d=2$, which we now state in the more general setting (projecting out the superfluous copies of $y$).

\begin{proposition}\label{prop:extended-formulation-cayley}
An ideal MIP formulation for $\Smax$ is:
    \begin{subequations}\label{eqn:extended-formulation}
    \begin{alignat}{2}
    (x,y) &= \sum_{k=1}^d (\tx^k, w^k \cdot \tx^k + b^kz_k) \label{eqn:extended-formulation-1} \\
    \tx^k &\in z_k \cdot D_{|k} &\quad &\forall k \in \llbracket d \rrbracket \label{eqn:extended-formulation-4} \\
    z &\in \Delta^d \label{eqn:extended-formulation-5} \\
    z &\in \{0,1\}^d.
    \end{alignat}
    \end{subequations}
    Denote its LP relaxation by $\Rextended = \Set{(x,y,z,\tx^1,\ldots,\tx^k) | (\ref{eqn:extended-formulation-1}-\ref{eqn:extended-formulation-5})}$. Then $\Proj_{x,y,z}(\Rextended) = \Rcayley$.
\end{proposition}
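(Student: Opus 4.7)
The plan is to recognize \eqref{eqn:extended-formulation} as an instance of Balas's disjunctive formulation~\cite{Balas:1985,Balas:1998} for the convex hull of a union of bounded polyhedra. Concretely, define
\[
P_k \defeq \Set{ (x, f^k(x), \mathbf{e}^k) | x \in D_{|k}}\qquad (k \in \llbracket d \rrbracket),
\]
so that $\Scayley = \bigcup_k P_k$ and $\Rcayley = \Conv\bigl(\bigcup_k P_k\bigr)$. The extended description \eqref{eqn:extended-formulation-1}--\eqref{eqn:extended-formulation-5} is exactly the Balas lifting for this union after eliminating the redundant copies of $y$, with $\tx^k$ playing the role of the scaled disaggregated input and $z_k$ the indicator multiplier.

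First, I would justify the validity (and simultaneously the behavior at the binary extremes) of the formulation. Under the dilation convention of Section~\ref{sec:assumptions}, $z_k\cdot D_{|k}$ is the scaled polyhedron when $z_k > 0$, while $z_k = 0$ gives the recession cone of $D_{|k}$, which is $\{0\}$ since $D$ (hence each $D_{|k}$) is bounded. So any feasible integer point has $z = \mathbf{e}^{k^\ast}$ for some $k^\ast$, $\tx^k = 0$ for $k \neq k^\ast$, and therefore $x = \tx^{k^\ast} \in D_{|k^\ast}$ and $y = w^{k^\ast}\cdot x + b^{k^\ast} = \max_\ell f^\ell(x)$, as required; conversely every $(x,y)\in\Smax$ lifts by choosing a maximizing index.

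Next, I would prove the projection identity $\Proj_{x,y,z}(\Rextended) = \Rcayley$ by two inclusions. For $\supseteq$, each point of $\Scayley$ lifts to $\Rextended$ by the construction above, so the (convex) projection of $\Rextended$ contains $\Conv(\Scayley) = \Rcayley$. For $\subseteq$, given $(x,y,z,\tx^1,\ldots,\tx^d)\in\Rextended$, set $K = \{k : z_k > 0\}$; boundedness gives $\tx^k = 0$ for $k\notin K$. For $k\in K$ let $\hat x^k \defeq \tx^k/z_k\in D_{|k}$. Then
\[
(x,y,z) \;=\; \sum_{k\in K} z_k\bigl(\hat x^k,\; f^k(\hat x^k),\; \mathbf{e}^k\bigr),
\]
exhibiting $(x,y,z)$ as a convex combination of elements of $\Scayley$, hence in $\Rcayley$. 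Idealness of \eqref{eqn:extended-formulation} itself (as opposed to just of its projection) then follows from the standard extreme-point analysis of Balas's construction: since each $P_k$ is bounded and the lifting tags each $P_k$ by the distinct vertex $\mathbf{e}^k$ of $\Delta^d$, every extreme point of $\Rextended$ must place all weight on a single $k$, i.e. $z\in\{0,1\}^d$.

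The main technical care point I anticipate is the behavior of $z_k\cdot D_{|k}$ at $z_k = 0$: this is exactly where the bounded-domain assumption of Section~\ref{sec:assumptions} is used, since without boundedness the projection would pick up stray recession directions and both the validity argument and the convex-combination decomposition in the $\subseteq$ direction would break. Everything else is a direct application of the classical disjunctive-programming machinery.
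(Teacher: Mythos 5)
Your proposal is correct and follows essentially the same route as the paper, which simply cites the disjunctive-programming results of Balas \cite{Balas:1985,Balas:1998}; you have merely unpacked that citation, correctly handling the $z_k=0$ dilation via boundedness of $D$ and giving the standard convex-combination and extreme-point arguments. No gaps.
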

\blue{
\begin{proof}
Follows directly from results in \cite{Balas:1985,Balas:1998}.
\qed\end{proof}}
Although this formulation is ideal and polynomially-sized, this extended formulation can exhibit poor practical performance, as noted in Section~\ref{ssec:ideal-extended} and corroborated in the computational experiments in Section~\ref{sec:computational}. Observe that, from definition, constraint \eqref{eqn:extended-formulation-4} for a given $k \in \llbracket d \rrbracket$ is equivalent to the set of constraints $\tx^k \in z_k \cdot D$ and $w^k \cdot \tx^k + b^kz_k \geq w^\ell \cdot \tx^k + b^\ell z_k$ for each $\ell \neq k$.

\subsection{A recipe for constructing ideal formulations} \label{ssec:ideal-recipe}
Our goal in this section is to derive generic tools that allow us to build ideal formulations for $\Smax$ via the Cayley embedding.

\subsubsection{A primal characterization}

Our first structural result provides a characterization for the Cayley embedding. Although it is not an explicit polyhedral characterization, we will subsequently see how it can be massaged into a more practically amenable form.

Take the system
\begin{subequations}\label{eqn:max_ideal_set_primal}
\begin{align}
    y &\leq \bar{g}(x,z)\label{eqn:max_ideal_set_primal-1}\\
    y &\geq \ubar{g}(x,z)\label{eqn:max_ideal_set_primal-2}\\
    (x,y,z) &\in D \times \mathbb{R} \times \Delta^d,\label{eqn:max_ideal_set_primal-3}
\end{align}
\end{subequations}
where
\begin{align*}
    \bar{g}(x,z) \defeq \max_{\tx^1, \ldots, \tx^d}\Set{\sum_{k=1}^d w^k \cdot \tx^k + b^k z_k | \begin{array}{cc} x = \sum_k \tx^k & \\ \tx^k \in z_k \cdot D_{|k} &\ \forall k \in \llbracket d \rrbracket \end{array}}\hspace{0.45em}  \\
    \ubar{g}(x,z) \defeq \min_{\tx^1, \ldots, \tx^d}\Set{\sum_{k=1}^d w^k \cdot \tx^k + b^k z_k | \begin{array}{cc} x = \sum_k \tx^k & \\ \tx^k \in z_k \cdot D_{|k} &\ \forall k \in \llbracket d \rrbracket \end{array}},
\end{align*}
and define the set $\Rideal \defeq \Set{ (x,y,z) | \eqref{eqn:max_ideal_set_primal}}$. \blue{Note that $\Rideal$ can be thought of as \eqref{eqn:extended-formulation} with the $\tx^k$ variables implicitly projected out.}

\begin{proposition}\label{prop:max_ideal_set_primal}
    The set $\Rideal$ is polyhedral, and $\Rideal = \Rcayley$.
\end{proposition}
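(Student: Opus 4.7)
The plan is to identify $\Rideal$ with the orthogonal projection $\Proj_{x,y,z}(\Rextended)$, at which point Proposition~\ref{prop:extended-formulation-cayley} delivers both the equality $\Rideal = \Rcayley$ and polyhedrality in one stroke (projections of polyhedra being polyhedra). The key observation is that \eqref{eqn:max_ideal_set_primal-1}--\eqref{eqn:max_ideal_set_primal-2} encode precisely the Fourier--Motzkin elimination of the auxiliary variables $\tx^1,\ldots,\tx^d$ from \eqref{eqn:extended-formulation-1}: the remaining constraints \eqref{eqn:extended-formulation-4}--\eqref{eqn:extended-formulation-5} reappear both inside the optimization problems defining $\bar{g}$ and $\ubar{g}$ and in the outer constraint \eqref{eqn:max_ideal_set_primal-3}.

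To make this identification, I would fix $(x,z) \in D \times \Delta^d$ and consider the polytope
\[
P(x,z) \defeq \Set{ (\tx^1,\ldots,\tx^d) | x=\sum_{k=1}^d \tx^k,\ \tx^k \in z_k \cdot D_{|k}\ \forall k \in \llbracket d \rrbracket}.
\]
A point $(x,y,z)$ lies in $\Proj_{x,y,z}(\Rextended)$ if and only if $P(x,z) \neq \emptyset$ and $y$ lies in the image of $P(x,z)$ under the linear functional $(\tx^1,\ldots,\tx^d) \mapsto \sum_k (w^k \cdot \tx^k + b^k z_k)$. Since $P(x,z)$ is convex, this image (when nonempty) is the closed interval $[\ubar{g}(x,z),\bar{g}(x,z)]$, which matches \eqref{eqn:max_ideal_set_primal-1}--\eqref{eqn:max_ideal_set_primal-2} exactly; with the standard conventions $\max\emptyset=-\infty$ and $\min\emptyset=+\infty$, infeasibility of $P(x,z)$ translates on both sides to the nonexistence of a valid $y$. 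The only remaining wrinkle is that $\Rideal$ explicitly requires $x \in D$ while $\Rextended$ does not; but for $z_k>0$ we have $\tx^k/z_k \in D_{|k} \subseteq D$, and boundedness of $D$ forces $\tx^k=0$ whenever $z_k=0$ (since then $z_k\cdot D_{|k}$ collapses to the recession cone of $D_{|k}$), so $x=\sum_{k:\,z_k>0} z_k\cdot(\tx^k/z_k)$ is a convex combination of points in $D$ and lies in $D$ automatically.

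Polyhedrality of $\Rideal$ is then immediate, as it equals the projection of the polyhedron $\Rextended$. The main technical point I anticipate is the careful treatment of the dilation $z_k\cdot D_{|k}$ when $z_k=0$: it must yield a single point to keep the projection argument watertight, which follows from the footnote convention $z \cdot D = \Set{x | Ax\leq bz}$ combined with boundedness of $D$. Once this bookkeeping is in place, Proposition~\ref{prop:max_ideal_set_primal} reduces to a one-line invocation of Proposition~\ref{prop:extended-formulation-cayley}.
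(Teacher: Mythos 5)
Your proposal is correct, and it follows the paper's overall strategy — identify $\Rideal$ with $\Proj_{x,y,z}(\Rextended)$ and then invoke Proposition~\ref{prop:extended-formulation-cayley} — but it proves the nontrivial inclusion by a different mechanism. The paper first establishes that $\Rideal$ is convex (via concavity of $\bar{g}$ and convexity of $\ubar{g}$ as LP value functions), then argues that every extreme point of $\Rideal$ must be tight at \eqref{eqn:max_ideal_set_primal-1} or \eqref{eqn:max_ideal_set_primal-2}, lifts such a point to $\Rextended$ using an optimizer of the corresponding LP, and concludes $\Rideal \subseteq \Proj_{x,y,z}(\Rextended)$ from $\ext(\Rideal) \subseteq \Proj_{x,y,z}(\Rextended)$. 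You instead fix $(x,z)$ and characterize the fiber directly: the feasible set $P(x,z)$ of the $\tx^k$'s is a polytope, so its image under the linear functional $(\tx^1,\ldots,\tx^d) \mapsto \sum_{k}(w^k\cdot\tx^k + b^k z_k)$ is exactly the closed interval $[\ubar{g}(x,z),\bar{g}(x,z)]$, which gives both inclusions at once and, with the conventions $\bar{g}\in\bbR\cup\{-\infty\}$, $\ubar{g}\in\bbR\cup\{+\infty\}$, handles the infeasible fibers automatically. This buys you a shorter argument that needs neither the convexity of $\Rideal$ nor the extreme-point machinery (the paper's step ``$\ext(\Rideal)\subseteq\Proj_{x,y,z}(\Rextended)$ implies the inclusion by convexity'' quietly uses boundedness of $\Rideal$, which your fiber argument sidesteps entirely); what it requires in exchange is exactly the bookkeeping you identify, namely that $z_k\cdot D_{|k}=\{{\bf 0}^\eta\}$ when $z_k=0$ (by boundedness of $D$, as the paper itself uses in the proof of Proposition~\ref{prop:max_sharp_set_primal}) and that $x\in D$ is implied for points of $\Proj_{x,y,z}(\Rextended)$ since $x=\sum_{k:z_k>0}z_k(\tx^k/z_k)$ is a convex combination of points of $D$ — matching the paper's remark after the proposition that the constraint $x\in D$ is redundant. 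Your closing description of this as Fourier--Motzkin elimination is loose, but the actual argument you give does not depend on it, and the polyhedrality conclusion (projection of the polyhedron $\Rextended$, equivalently polyhedrality of $\Rcayley$) is the same as the paper's.
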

\begin{proof}
By Proposition~\ref{prop:extended-formulation-cayley}, it suffices to show that $\Rideal = \Proj_{x,y,z}(\Rextended)$. We start by observing that, as $\bar{g}$ (respectively $\ubar{g})$ is concave (resp. convex) in its imputs as it is the value function of a linear program (cf. \cite[Theorem 5.1]{Bertsimas:1997}). Therefore, the set of points satisfying \eqref{eqn:max_ideal_set_primal} is convex.

Let $(\hat{x}, \hat{y}, \hat{z})$ be an extreme point of $\Rideal$, which exists as $\Rideal$ is convex. Then it must satisfy either \eqref{eqn:max_ideal_set_primal-1} or \eqref{eqn:max_ideal_set_primal-2} at equality, as otherwise it is a convex combination of $(\hat{x}, \hat{y} - \epsilon, \hat{z})$ and $(\hat{x}, \hat{y} + \epsilon, \hat{z})$ for some $\epsilon > 0$. Take $\tx^1, \ldots, \tx^d$ that optimizes $\bar{g}(x,z)$ or $\ubar{g}(x,z)$, depending on which constraint is satisfied at equality. Then $(\hat{x}, \hat{y}, \hat{z}, \tx^1, \ldots, \tx^d) \in \Rextended$. In other words, $\text{ext}(\Rideal) \subseteq \Proj_{x,y,z}(\Rextended)$, and thus $\Rideal \subseteq \Proj_{x,y,z}(\Rextended)$ by convexity.

Conversely, let $(\hat{x},\hat{y},\hat{z},\tx^1,\ldots,\tx^d) \in \Rextended$. Then $\hat{y} = \sum_{k=1}^d (w^k \cdot \tx^k + b^k\hat{z}_k) \leq \bar{g}(\hat{x}, \hat{z})$, as $(\{\tx^k\}_{k=1}^d)$ is feasible for the optimization problem in $\bar{g}(x, z)$. 
Likewise, $\hat{y} \geq \ubar{g}(\hat{x}, \hat{z})$, and \eqref{eqn:max_ideal_set_primal-3} is trivially satisfied. Therefore, $(\hat{x}, \hat{y}, \hat{z}) \in \Rideal$.

Polyhedrality of $\Rideal$ then follows as $\Rcayley$ is itself a polyhedron.
\qed\end{proof}

Note that the input domain constraint $x \in D$ is implied by the constraints (\ref{eqn:max_ideal_set_primal-1})--(\ref{eqn:max_ideal_set_primal-2}), and therefore do not need to be explicitly included in this description. However, we include it here in our description for clarity. Moreover, observe that $\bar{g}$ is a function from $D \times \Delta^d$ to $\mathbb{R} \cup \{-\infty\}$, since the optimization problem may be infeasible, but is always bounded from above since $D$ is bounded. Likewise, $\ubar{g}$ is a function from $D \times \Delta^d$ to $\mathbb{R} \cup \{+\infty\}$.


\subsubsection{A dual characterization}\label{sec:ideal_dual}

From Proposition~\ref{prop:max_ideal_set_primal}, we can derive a more useful characterization by applying Lagrangian duality to the LPs describing the envelopes $\bar{g}(x,z)$ and $\ubar{g}(x,z)$.



\begin{proposition} \label{prop:max_ideal_set_dual}
The Cayley embedding $\Rcayley$ is equal to all $(x,y,z)$ satisfying
\begin{subequations}\label{eqn:max_ideal_set_dual}
\begin{align}
    y \leq \bar{\alpha} \cdot x + \sum_{k=1}^d \left(\max_{x^k \in D_{|k}}\{(w^k - \bar{\alpha}) \cdot x^k\} + b^k\right)z_k&\quad\forall \bar{\alpha} \in \mathbb{R}^\eta\label{eqn:max_ideal_set_dual-1}\\
    y \geq \ubar{\alpha} \cdot x + \sum_{k=1}^d \left(\min_{x^k \in D_{|k}}\{(w^k - \ubar{\alpha}) \cdot x^k\} + b^k\right)z_k&\quad\forall \ubar{\alpha} \in \mathbb{R}^\eta\label{eqn:max_ideal_set_dual-2}\\
    (x,y,z) \in D \times \mathbb{R} \times \Delta^d.
\end{align}
\end{subequations}
\end{proposition}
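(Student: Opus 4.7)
The starting point is Proposition~\ref{prop:max_ideal_set_primal}, which already identifies $\Rcayley$ with $\Rideal$, the set defined by $y \leq \bar{g}(x,z)$, $y \geq \ubar{g}(x,z)$, and $(x,y,z) \in D \times \bbR \times \Delta^d$. The goal is therefore to rewrite each of the two envelope inequalities as a (possibly infinite) family of linear inequalities in $(x,y,z)$ by dualizing the linear programs defining $\bar{g}$ and $\ubar{g}$. The final domain constraints in \eqref{eqn:max_ideal_set_dual} then carry over unchanged from \eqref{eqn:max_ideal_set_primal-3}.

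The core technical step is a partial Lagrangian duality argument applied to $\bar{g}(x,z)$. I would relax only the coupling constraint $\sum_k \tx^k = x$ by introducing a free multiplier $\bar{\alpha} \in \bbR^\eta$, keeping the constraints $\tx^k \in z_k \cdot D_{|k}$ inside the inner maximization. The Lagrangian then separates over $k$, yielding
\[
    \bar{g}(x,z) \;=\; \min_{\bar{\alpha} \in \bbR^\eta} \left[\, \bar{\alpha} \cdot x + \sum_{k=1}^d \left( \max_{\tx^k \in z_k \cdot D_{|k}} (w^k - \bar{\alpha}) \cdot \tx^k \right) + \sum_{k=1}^d b^k z_k \,\right].
\]
Next I would invoke the scaling identity $\max_{\tx \in z_k \cdot D_{|k}} (w^k - \bar{\alpha}) \cdot \tx = z_k \max_{x^k \in D_{|k}} (w^k - \bar{\alpha}) \cdot x^k$, valid for $z_k > 0$ via the substitution $\tx = z_k x^k$ and trivially for $z_k = 0$ (where both sides vanish, since $z_k \cdot D_{|k} = \{0\}$ whenever $D_{|k}$ is nonempty, which is guaranteed by Assumption~\ref{ass:amphibious}). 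Substituting back recovers exactly the right-hand side of \eqref{eqn:max_ideal_set_dual-1}, so $y \leq \bar{g}(x,z)$ holds if and only if \eqref{eqn:max_ideal_set_dual-1} holds for every $\bar{\alpha} \in \bbR^\eta$. An identical argument with $\max$ and $\min$ swapped handles $\ubar{g}$ and yields \eqref{eqn:max_ideal_set_dual-2}.

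The main obstacle to dispatch is the justification of strong duality and the treatment of infeasibility. Since $D$ is a bounded polyhedron, so is each $D_{|k}$ and thus each $z_k \cdot D_{|k}$, so the inner LP is either infeasible or has a finite optimum, making LP strong duality available whenever the primal is feasible. When $(x,z) \in D \times \Delta^d$ admits a feasible decomposition into pieces $\tx^k \in z_k \cdot D_{|k}$ summing to $x$, the equality above is immediate. When no such decomposition exists, $\bar{g}(x,z) = -\infty$ in the primal description, and I would argue via LP duality (for instance a Farkas certificate) that the dual is unbounded below, i.e.\ there exists a sequence of $\bar{\alpha}$ driving the right-hand side of \eqref{eqn:max_ideal_set_dual-1} to $-\infty$, so no $y$ satisfies the full family. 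This matches the behavior on the primal side and closes the equivalence. Polyhedrality of the resulting set is inherited from that of $\Rcayley$ already established in Proposition~\ref{prop:max_ideal_set_primal}.
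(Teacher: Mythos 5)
Your proposal is correct and follows essentially the same route as the paper: starting from Proposition~\ref{prop:max_ideal_set_primal}, you dualize the coupling constraint $x = \sum_k \tx^k$ in the LPs defining $\bar{g}$ and $\ubar{g}$ and invoke zero duality gap for LPs to obtain the families \eqref{eqn:max_ideal_set_dual-1} and \eqref{eqn:max_ideal_set_dual-2}. Your explicit treatment of the $z_k=0$ scaling and of the infeasible case (where $\bar{g}(x,z)=-\infty$ and the dual is driven to $-\infty$ by a separating $\bar{\alpha}$) simply makes rigorous what the paper's change of variables $x^k \leftarrow \tx^k/z_k$ and blanket ``for all $(x,z)$'' leave implicit.
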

\begin{proof}
Consider the upper bound inequalities for $y$ in Proposition~\ref{prop:max_ideal_set_primal}, that is, \eqref{eqn:max_ideal_set_primal-1}. Now apply the change of variables $x^k \leftarrow \frac{\tx^k}{z_k}$ for each $k \in \llbracket d \rrbracket$ and, for all $(x,z)$, take the Lagrangian dual of the optimization problem in $\bar{g}(x,z)$ with respect to the constraint $x = \sum_k x^k z_k$. Note that the duality gap is zero since the problem is an LP. We then obtain that
\begin{align*}
    \bar{g}(x,z) &= \min_{\bar{\alpha}} \max_{x^k \in D_{|k}} \sum_{k=1}^d \left(w^k \cdot x^k + b^k\right) z_k + \bar{\alpha} \cdot \left(x - \sum_{k=1}^d x^k z_k\right) \notag\\
    &= \min_{\bar{\alpha}}\ \bar{\alpha} \cdot x + \sum_{k=1}^d \left(\max_{x^k \in D_{|k}}\{(w^k - \bar{\alpha}) \cdot x^k\} + b^k\right)z_k.
\end{align*}

In other words, we can equivalently express~\eqref{eqn:max_ideal_set_primal-1} via the family of inequalities~\eqref{eqn:max_ideal_set_dual-1}. The same can be done with~\eqref{eqn:max_ideal_set_primal-2}, yielding the set of inequalities~\eqref{eqn:max_ideal_set_dual-2}. Therefore, $\Set{(x,y,z) | \eqref{eqn:max_ideal_set_dual}} = \Proj_{x,y,z}(\Rextended) = \Rcayley$.
\qed\end{proof}

This gives an exact outer description for the Cayley embedding in terms of an infinite number of linear inequalities. Despite this, the formulation enjoys a simple, interpretable form: we can view the inequalities as choosing coefficients on $x$ and individually tightening the coefficients on $z$ according to explicitly described LPs. \blue{Similar to the relationship between \eqref{eqn:extended-formulation} and \eqref{eqn:max_ideal_set_primal}, \eqref{eqn:max_ideal_set_dual} can be seen as a simplification of the standard cut generation LP for unions of polyhedra \cite{Balas:1985,Balas:1998}.} In later sections, we will see that this decoupling is helpful to simplify (a variant of) this formulation for special cases.

Separating a point $(\hat{x}, \hat{y}, \hat{z})$ over~\eqref{eqn:max_ideal_set_dual-1} can be done by evaluating $\bar{g}(\hat{x},\hat{z})$ in the form~\eqref{eqn:max_ideal_set_dual-1} (and in the analogous form of $\ubar{g}(\hat{x},\hat{z})$ for~\eqref{eqn:max_ideal_set_dual-2}). As typically done when using Lagrangian relaxation, this optimization problem can be solved via a subgradient or bundle method, where each subgradient can be computed by solving the inner LP in~\eqref{eqn:max_ideal_set_dual-1} for all $x^k \in D_{|k}$. Observe that any feasible solution $\bar{\alpha}$ for the optimization problem in~\eqref{eqn:max_ideal_set_dual-1} yields a valid inequality. However, this optimization problem is unbounded when $(\hat{x}, \hat{z}) \notin \Proj_{x,z}(\Rcayley)$ (i.e. when the primal form of $\bar{g}(\hat{x}, \hat{z})$ is infeasible). In other words, as illustrated in Figure~\ref{fig:upper_bound_functions}, $\bar{g}$ is an \emph{extended} real valued function such that $\bar{g}({x}, {z})\in \mathbb{R}\cup \{-\infty\}$, so care must be taken to avoid numerical instabilities when separating a point $(\hat{x}, \hat{z})$ where $\bar{g}(\hat{x}, \hat{z})=-\infty$.\footnote{As is standard in a Benders' decomposition approach, we can address this by adding a \emph{feasibility cut} describing the domain of $\bar{g}$ (the region where it is finite valued) instead of an \emph{optimality cut} of the form \eqref{eqn:max_ideal_set_dual-1}.}



\subsection{A recipe for constructing hereditarily sharp formulations}

Although Proposition~\ref{prop:max_ideal_set_dual} gives a separation-based way to optimize over $\Smax$, there are two potential downsides to this approach. First, it does not give us a explicit, finite description for a MIP formulation that we can directly pass to a MIP solver. Second, the separation problem requires optimizing over $D_{|k}$, which may be substantially more complicated than optimizing over $D$ (for example, if $D$ is a box).

Therefore, in this section we set our sights slightly lower and present a similar technique to derive \emph{sharp} MIP formulations for $\Smax$. Furthermore, we will see that our formulations trivially satisfy the hereditary sharpness property. In the coming sections, we will see how we can deploy these results in a practical manner, and study settings in which the simpler sharp formulation will also, in fact, be ideal.

\subsubsection{A primal characterization}

Consider the system
\begin{subequations}\label{eqn:max_sharp_set_primal}
\begin{align}
    y &\leq \bar{h}(x, z)\label{eqn:max_sharp_set_primal-1}\\
    y &\geq w^k \cdot x + b^k\quad\forall k \in \llbracket d \rrbracket \label{eqn:max_sharp_set_primal-2}\\
    (x,y,z) &\in D \times \mathbb{R} \times \Delta^d,
\end{align}
\end{subequations}
where
\begin{align*}
    \bar{h}(x, z) \defeq \max_{\tx^1, \ldots, \tx^d}\Set{\sum_{k=1}^d (w^k \cdot \tx^k + b^k z_k) | \begin{array}{cc} x = \sum_k \tx^k & \\ \tx^k \in z_k \cdot D &\ \forall k \in \llbracket d \rrbracket \end{array}}.
\end{align*}
Take the set $\Rsharp \defeq \Set{ (x,y,z) | \eqref{eqn:max_sharp_set_primal} }$.

It is worth dwelling on the differences between the systems~\eqref{eqn:max_ideal_set_primal} and \eqref{eqn:max_sharp_set_primal}. First, we have completely replaced the constraint \eqref{eqn:max_ideal_set_primal-2} with $d$ explicit linear inequalities \eqref{eqn:max_sharp_set_primal-2}. Second, when replacing $\bar{g}$ with $\bar{h}$ we have replaced the inner maximization over $D_{|k}$ with an inner maximization over $D$ (modulo constant scaling factors). As we will see in Section~\ref{ssec:max-cuts}, this is particularly advantageous when $D$ is trivial to optimize over (for example, a simplex or a box), allowing us to write these constraints in closed form, whereas optimizing over $D_{|k}$ may be substantially more difficult (i.e. requiring an LP solve).

Furthermore, we will show that while \eqref{eqn:max_sharp_set_primal} is not ideal, it is hereditarily sharp, and so in general may offer a strictly stronger relaxation than a standard sharp formulation. In particular, the formulation may be stronger than a sharp formulation constructed by composing a big-$M$ formulation, along with an exact convex relaxation in the $(x,y)$-space produced, for example, by studying the upper concave envelope of the function $\Max \circ (f^1,\ldots,f^d)$.

\begin{proposition} \label{prop:max_sharp_set_primal}
    The system describing $\Set{(x,y,z) \in \Rsharp | z \in \{0,1\}^d}$ is a hereditarily sharp MIP formulation of $\Smax$.
\end{proposition}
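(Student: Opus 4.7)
The plan is to verify validity and then prove (hereditary) sharpness. For validity, note that $z \in \{0,1\}^d \cap \Delta^d$ forces $z = \mathbf{e}^k$ for some $k$, so the constraints $\tx^l \in z_l \cdot D$ collapse to $\tx^l = 0$ for $l \neq k$ and $\tx^k = x \in D$; hence $\bar{h}(x, \mathbf{e}^k) = f^k(x)$, and combined with the lower bounds this pins $y = f^k(x)$ and $x \in D_{|k}$, placing $(x,y) \in \Smax$. The converse lifting is immediate.

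For sharpness, I would prove $\Proj_{x,y}(\Rsharp) = \Conv(\Smax)$ by double inclusion. The $\supseteq$ direction writes any $(x,y) \in \Conv(\Smax)$ as $\sum_k \lambda_k (x^k, f^k(x^k))$ with $x^k \in D_{|k}$ and lifts via $z_k = \lambda_k$, $\tx^k = \lambda_k x^k$; the lower bounds $y \geq f^{k'}(x)$ follow from $f^k(x^k) \geq f^{k'}(x^k)$ on $D_{|k}$. For $\subseteq$, given $(x,y,z) \in \Rsharp$ with auxiliary $\tx^l$ attaining $\bar{h}(x,z)$, I set $x^l \defeq \tx^l/z_l$ whenever $z_l > 0$; then both $(x, \bar{f}(x))$ and each $(x^l, \bar{f}(x^l))$ lie in $\Smax$ for $\bar{f} \defeq \max_k f^k$, and the sandwich $\bar{f}(x) \leq y \leq \bar{h}(x,z) = \sum_l z_l f^l(x^l) \leq \sum_l z_l \bar{f}(x^l)$ lets me pick $\mu \in [0,1]$ so that $(x,y) = \mu(x, \bar{f}(x)) + (1 - \mu) \sum_l z_l (x^l, \bar{f}(x^l))$ is a convex combination of $\Smax$ points.

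For the hereditary property, fixings that set some $z_l = 1$ are immediate: the simplex forces $z = \mathbf{e}^l$ and the restricted LP coincides with the convex set $\{(x,f^l(x)) : x \in D_{|l}\}$. The substantive case is fixing $z_l = 0$ for $l$ in some $T \subseteq \llbracket d \rrbracket$; writing $J = \llbracket d \rrbracket \setminus T$, I want $\Proj_{x,y}(\Rsharp^J) = \Conv(S^J)$ with $S^J \defeq \bigcup_{l \in J} \{(x, f^l(x)) : x \in D_{|l}\}$. My approach is to reapply the sharpness decomposition with the sum restricted to $l \in J$. The main obstacle will be that the constituents $(x, \bar{f}(x))$ and $(x^l, \bar{f}(x^l))$ may lie in $\Smax \setminus S^J$ when $\bar{f}$ is attained only by some $k^* \in T$. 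I would clear this by observing that $\Rsharp^J$ is exactly the sharp formulation for $\max_{l \in J} f^l$ on $D$ intersected with the extra lower bounds $\{y \geq f^l(x) : l \in T\}$, so its projection equals $\Conv(\tilde{S}^J) \cap \{y \geq f^l(x) : l \in T\}$ with $\tilde{S}^J \defeq \{(x, \max_{l \in J} f^l(x)) : x \in D\}$; identifying this intersection with $\Conv(S^J)$ then reduces to showing that these extra lower bounds restrict $x$ to $\Conv(\bigcup_{l \in J} D_{|l})$, after which any extreme-point representation in $\Conv(\tilde{S}^J)$ satisfying the bounds can be refined into a combination supported on $S^J$.
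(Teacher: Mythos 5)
Your validity argument and both inclusions for sharpness are sound, and your proof of $\Proj_{x,y}(\Rsharp) \subseteq \Conv(\Smax)$ takes a genuinely different route from the paper: the paper restricts attention to extreme points of $\Proj_{x,y}(\Rsharp)$, splits into the case where a lower bound is tight and the case $\hat{y}=\max_z \bar{h}(\hat{x},z)$, and in the latter runs an exchange argument on an optimal solution of $\bar{h}$ to show it in fact satisfies $\tx^k \in z_k\cdot D_{|k}$, i.e.\ lies in $\Rextended$. You avoid extreme points and the exchange step entirely by sandwiching $y$ between $\max_k f^k(x)$ and $\sum_{l}z_l\max_k f^k(x^l)$ and writing $(x,y)$ directly as a convex combination of the graph points $(x,\max_k f^k(x))$ and $(x^l,\max_k f^k(x^l))$ with weights $\mu$ and $(1-\mu)z_l$; this is correct and arguably cleaner. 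Two smaller omissions relative to the paper: you never argue that $\Rsharp$ is a polyhedron (the paper does this via LP duality, showing $\bar{h}$ is concave piecewise linear), which is needed for this to be a MIP formulation in the sense of Definition~\ref{def:ideal}; and in the $\supseteq$ direction the reduction to one graph point per piece should be justified by grouping points lying in the same $D_{|k}$ (convexity of $D_{|k}$ plus affinity of $f^k$) --- easy, but worth a line.

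The genuine gap is in the hereditary part for fixings $z_l=0$, $l\in T$. Your plan is to show $\Proj_{x,y}(\Rsharp^J)=\Conv(\tilde{S}^J)\cap\Set{(x,y) | y\ge f^l(x)\ \forall l\in T}$ (fine) and then identify this intersection with $\Conv(S^J)$. That identification is false in general: the leftover lower bounds can only cut from below, while $\Conv(\tilde{S}^J)$ and $\Conv(S^J)$ also differ in their \emph{upper} envelopes. Concretely, take $\eta=1$, $D=[0,2]$, $f^1(x)=-x$, $f^2(x)=x-2$, $f^3(x)=-3x+\tfrac12$ (Assumption~\ref{ass:amphibious} holds), and fix $z_3=0$, so $J=\{1,2\}$, $T=\{3\}$, $D_{|1}=[\tfrac14,1]$, $D_{|2}=[1,2]$, and $S^J=\Set{(x,-x) | x\in[\tfrac14,1]}\cup\Set{(x,x-2) | x\in[1,2]}$. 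The point $(x,y,z)=(1,0,(\tfrac12,\tfrac12,0))$ satisfies every constraint of the restricted system ($\bar{h}(1,z)=0$ via $\tx^2=1$, $\tx^1=\tx^3=0$, and all lower bounds at $x=1$ are at most $-1$), and $(1,0)$ lies in $\Conv(\tilde{S}^J)\cap\{y\ge f^3(x)\}$; yet the largest $y$ with $(1,y)\in\Conv(S^J)$ is $-\tfrac17$, attained on the chord from $(\tfrac14,-\tfrac14)$ to $(2,0)$. So the "refinement onto $S^J$" you invoke cannot be carried out, and the sub-step you flag (restricting $x$ to $\Conv(\bigcup_{l\in J}D_{|l})$) is not the obstruction --- that restriction already holds at $x=1$ here. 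Note also that the paper's own argument for this case is different and more modest: it observes that fixing $z_l=0$ forces $\tx^l=\mathbf{0}$, so the node system is exactly the $\Rsharp$-type system for the surviving functions $\{f^\ell\}_{\ell\in J}$ over $D$ (with the dropped functions' lower bounds remaining as valid inequalities), and it concludes from the sharpness already proved for that reduced system; it does not attempt the equality with the node integer hull $\Conv(S^J)$ that your plan targets, and indeed the example above shows that stronger equality can fail for $d\ge 3$, so no completion of your route as written is possible.
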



\begin{proof}
For the result, we must show four properties: polyhedrality of $\Rsharp$, validity of the formulation whose LP relaxation is $\Rsharp$, sharpness, and then hereditary sharpness. We proceed in that order.

To show polyhedrality, consider a fixed value $(\hat{x},\hat{y},\hat{z})$ feasible for \eqref{eqn:max_sharp_set_primal}, and presume that we express the domain via the linear inequality constraints $D = \Set{x | Ax \leq c}$. First, observe that due to \eqref{eqn:max_sharp_set_primal-1} and \eqref{eqn:max_sharp_set_primal-2}, $\bar{h}(\hat{x},\hat{z})$ is bounded from below. Now, using LP duality, we may rewrite
\begin{align*}
    \bar{h}(\hat{x},\hat{z}) &= \max_{\tx^1, \ldots, \tx^d}\Set{\sum_{k=1}^d w^k \cdot \tx^k | \begin{array}{cc} \hat{x} = \sum_k \tx^k & \\ A \tx^k \leq \hat{z}_k c &\ \forall k \in \llbracket d \rrbracket \end{array}} + \sum_{k=1}^d b^k \hat{z}_k \\
    &= \min_{(\alpha, \beta^1, \ldots, \beta^k) \in R} \Set{ \alpha \cdot \hat{x} + \sum_{k=1}^d c \cdot \beta^k \hat{z}_k} + \sum_{k=1}^d b^k\hat{z}_k,
\end{align*}
where $R$ is a polyhedron that is \emph{independent of $\hat{x}$ and $\hat{z}$}. 
Therefore, as (a) the above optimization problem is linear with $\hat{x}$ and $\hat{z}$ fixed, and (b) $\bar{h}(\hat{x},\hat{z})$ is bounded from below, we may replace $R$ with $\ext(R)$ in the above optimization problem.
In other words, $\bar{h}(\hat{x},\hat{z})$ is equal to the minimum of a finite number of alternatives which are affine in $\hat{x}$ and $\hat{z}$. Therefore, $\bar{h}$ is a concave continuous piecewise linear function, and so $\Rsharp$ is polyhedral.

To show validity, we must have that $\Proj_{x,y}\left(\Rsharp \cap (\bbR^{\eta} \times \bbR \times \{0,1\}^d)\right) = \Smax$.
Observe that if $(\hat{x},\hat{y},\hat{z}) \in \Rsharp \cap (\bbR^{\eta} \times \bbR \times \{0,1\}^d)$, then $\hat{z} = {\bf e}^\ell$ for some $\ell \in \llbracket d \rrbracket$, and
\begin{align*}
    \bar{h}(\hat{x},\hat{z}) &= \max_{\tx^1, \ldots, \tx^d}\Set{\sum_{k=1}^d w^k \cdot \tx^k + b^\ell | \begin{array}{cc} \hat{x} = \sum_k \tx^k & \\ \tx^\ell \in D & \\ \tx^k = {\bf 0}^\eta &\forall k \neq \ell \end{array}} = w^\ell \hat{x} + b^\ell,
\end{align*}
where the first equality follows as $\tx^k = {\bf 0}^\eta$ for each $k \neq \ell$ (recall that if $D$ is bounded, then $0 \cdot D = \Set{x \in \bbR^\eta | Ax \leq 0} = \{{\bf 0}^\eta\}$).
Along with \eqref{eqn:max_sharp_set_primal-2}, this implies that $\hat{y} = w^\ell \cdot \hat{x} + b^\ell$, and that $\hat{y} \geq w^k \cdot \hat{x} + b^k$ for each $k \neq \ell$, giving the result.

To show sharpness, we must prove that $\Proj_{x,y}(\Rsharp) = \Conv(\Smax)$. First, recall from Proposition~\ref{prop:extended-formulation-cayley} that $\Conv(\Smax) = \Proj_{x,y}(\Rextended)$; thus, we state our proof in terms of $\Rextended$.
We first show that $\Proj_{x,y}(\Rextended) \subseteq \Proj_{x,y}(\Rsharp)$. Take $(\hat{x}, \hat{y}, \hat{z}, \{\hat{x}^k\}_{k=1}^d) \in \Rextended$. Then $\hat{y} = \sum_{k=1}^d (w^k \cdot \hat{x}^k + b^k\hat{z}_k) \leq \bar{h}(\hat{x}, \hat{z})$, as $(\{\hat{x}^k\}_{k=1}^d)$ is feasible for the optimization problem in $\bar{h}(x, z)$. It also holds that $\hat{y} \geq w^k \cdot \hat{x} + b^k$ for all $k \in \llbracket d \rrbracket$ and $\hat{x} \in D$ directly from the definition of $\Smax$, giving the result.

Next, we show that $\Proj_{x,y}(\Rsharp) \subseteq \Proj_{x,y}(\Rextended)$. This proof is similar to the proof of Proposition~\ref{prop:max_ideal_set_primal}, except that we choose $z$ that simplifies the constraints. It suffices to show that $\text{ext}(\Proj_{x,y}(\Rsharp)) \subseteq \Proj_{x,y}(\Rextended)$. Let $(\hat{x}, \hat{y}) \in \text{ext}(\Proj_{x,y}(\Rsharp))$. Define $\bar{h}(x) \defeq \max_{z} \Set{\bar{h}(x, z) | z \in \Delta^d}$. Then either $(\hat{x}, \hat{y})$ satisfies $\hat{y} = \bar{h}(\hat{x})$, or it satisfies~\eqref{eqn:max_sharp_set_primal-2} at equality for some $k \in \llbracket d \rrbracket$, since otherwise $(\hat{x}, \hat{y})$ is a convex combination of the points $(\hat{x}, \hat{y} - \epsilon)$ and $(\hat{x}, \hat{y} + \epsilon)$ feasible for $\Proj_{x,y}(\Rsharp)$ for some $\epsilon > 0$. We show that in either case, $(\hat{x}, \hat{y}) \in \Proj_{x,y}(\Rextended)$.

\underline{Case 1:} Suppose that for some $j \in \llbracket d \rrbracket$, $(\hat{x}, \hat{y})$ satisfies the corresponding inequality in \eqref{eqn:max_sharp_set_primal-2} at equality; that is, $\hat{y} = w^j \cdot \hat{x} + b^j$. Then the point $(\hat{x}, \hat{y}, \mathbf{e}_j, \{\hat{x}^k\}_{k=1}^d) \in \Rextended$, where $\hat{x}^j = x$ and $\hat{x}^\ell = 0$ if $\ell \neq j$. Hence, $(\hat{x}, \hat{y}) \in \text{proj}_{x,y}(\Rextended)$.

\underline{Case 2:} Suppose $(\hat{x}, \hat{y})$ satisfies $\hat{y} = \bar{h}(\hat{x})$. Let $\hat{z}$ be an optimal solution for the optimization problem defining $\bar{h}(\hat{x})$, and $\{\hat{x}^k\}_{k=1}^d$ be an optimal solution for $\bar{h}(\hat{x},\hat{z})$. By design, $(\hat{x}, \hat{y}, \hat{z}, \{\hat{x}^k\}_{k=1}^d)$ satisfies all constraints in $\Rextended$, except potentially constraint~\eqref{eqn:extended-formulation-4}.

We show that constraint~\eqref{eqn:extended-formulation-4} is satisfied as well. Suppose not for contradiction; that is, $w^k \cdot \hat{x}^k + b^k \hat{z}_k < w^\ell \cdot \hat{x}^k + b^\ell \hat{z}_k$ for some pair $k, \ell \in \llbracket d \rrbracket, \ell \neq k$. Consider the solution $(\{\bar{x}^k\}_{k=1}^d, \bar{z})$ identical to $(\{\hat{x}^k\}_{k=1}^d, \hat{z})$ except that $\bar{z}_k = 0$, $\bar{x}^k = {\bf 0}^\eta$, $\bar{z}_\ell = \hat{z}_\ell + \hat{z}_k$, and $\bar{x}^\ell = \hat{x}^\ell + \hat{x}^k$. By inspection, this solution is feasible for $\bar{h}(\hat{x})$. The objective value changes by $- (w^k \cdot \hat{x}^k + b^k \hat{z}_k) + (w^\ell \cdot \hat{x}^k + b^\ell \hat{z}_k) > 0$, contradicting the optimality of $(\{\hat{x}^k\}_{k=1}^d, \hat{z})$. Therefore, $(\hat{x}, \hat{y}, \hat{z}, \{\hat{x}^k\}_{k=1}^d) \in \Rextended$, and thus $(\hat{x}, \hat{y}) \in \Proj_{x,y}(\Rextended)$.

Finally, we observe that hereditary sharpness follows from the definition of $\bar{h}$. In particular, fixing any $z_k = 0$ implies that $\tx^k = {\bf 0}^\eta$ in the maximization problem defining $\bar{h}$. In other words, the variables $\tx^k$ and $z_k$ drop completely from the maximization problem defining $\bar{h}$, meaning that it is equal to the corresponding version of $\bar{h}$ with the function $k$ completely dropped as input. Additionally, if any $z_k = 1$, then $z_\ell = 0$ for each $\ell \neq k$ since $z \in \Delta^d$, and hence $\tx^\ell = {\bf 0}^\eta$. In this case, $\bar{h}(x,z) = w^k \cdot x + b^k$, which gives the result.
\qed\end{proof}



\subsubsection{A dual characterization}

We can apply a duality-based approach to produce an (albeit infinite) linear inequality description for the set $\Rsharp$, analogous to Section~\ref{sec:ideal_dual}. 

\begin{proposition}\label{prop:max_sharp_set_dual}
The set $\Rsharp$ is equal to all $(x,y,z)$ such that
\begin{subequations}\label{eqn:max_sharp_set_dual}
\begin{align}
    y \leq \alpha \cdot x + \sum_{k=1}^d \left(\max_{x^k \in D}\{(w^k - \alpha) \cdot x^k\} + b^k\right)z_k&\quad\forall \alpha \in \mathbb{R}^\eta\label{eqn:max_sharp_set_dual-1}\\
    y \geq w^k \cdot x + b^k&\quad\forall k \in \llbracket d \rrbracket\label{eqn:max_sharp_set_dual-2}\\
    (x,y,z) \in D \times \mathbb{R} \times \Delta^d. \label{eqn:max_sharp_set_dual-3}
\end{align}
\end{subequations}
\end{proposition}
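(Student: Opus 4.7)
The proof plan is to mirror the argument used for Proposition~\ref{prop:max_ideal_set_dual}, applying Lagrangian duality to the LP that defines $\bar{h}(x,z)$, now with the inner feasibility sets $z_k \cdot D_{|k}$ replaced by $z_k \cdot D$. Since the lower-bound inequalities \eqref{eqn:max_sharp_set_primal-2} are already explicit linear constraints that match \eqref{eqn:max_sharp_set_dual-2} verbatim, and the domain and simplex constraints are identical, all the work reduces to showing that the single nonlinear inequality \eqref{eqn:max_sharp_set_primal-1} is equivalent to the infinite linear family \eqref{eqn:max_sharp_set_dual-1}.

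Concretely, I would fix $(x,z) \in D \times \Delta^d$ and perform the change of variables $x^k \leftarrow \tx^k / z_k$ inside the LP defining $\bar{h}(x,z)$, turning $\tx^k \in z_k \cdot D$ into $x^k \in D$ (interpreting $x^k$ as unconstrained when $z_k = 0$, in which case $\tx^k = \mathbf{0}^\eta$ is forced). Then I would take the Lagrangian dual with respect to the single linking constraint $x = \sum_k z_k x^k$, using multipliers $\alpha \in \mathbb{R}^\eta$. Since the primal is a linear program, strong duality gives
\begin{equation*}
\bar{h}(x,z) \;=\; \min_{\alpha \in \mathbb{R}^\eta}\;\alpha \cdot x + \sum_{k=1}^d \left(\max_{x^k \in D}\{(w^k-\alpha)\cdot x^k\} + b^k\right) z_k,
\end{equation*}
whenever the primal is feasible. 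Recording $y \leq \bar{h}(x,z)$ as the requirement that $y$ not exceed this minimum then expands into one linear inequality \eqref{eqn:max_sharp_set_dual-1} per choice of $\alpha$, which is exactly the claimed description.

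The main obstacle is handling the degenerate cases carefully. When some $z_k = 0$, the variable substitution is only valid with the convention $0 \cdot D = \{\mathbf{0}^\eta\}$ noted at the start of Section~\ref{sec:assumptions}; but after dualization the resulting inner problem $\max_{x^k \in D}\{(w^k - \alpha)\cdot x^k\}$ is multiplied by $z_k = 0$, so that coordinate simply drops out of the inequality, and one checks this is consistent with the primal side. When the primal $\bar{h}(x,z)$ is infeasible (so $\bar{h}(x,z) = -\infty$), the Lagrangian dual is unbounded below, meaning there exists a direction along $\alpha$ for which the right-hand side of \eqref{eqn:max_sharp_set_dual-1} tends to $-\infty$; these directions yield inequalities that cut off $(x,z)$ from the projection, matching the fact that no $y$ satisfies \eqref{eqn:max_sharp_set_primal-1} at such $(x,z)$. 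Otherwise, since $D$ is bounded the primal is bounded above whenever feasible, so strong LP duality applies and no gap appears.

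Putting these pieces together, each $(x,y,z)$ in $\Rsharp$ satisfies \eqref{eqn:max_sharp_set_dual} because the dual is a relaxation whose optimum coincides with $\bar{h}(x,z)$, and conversely any $(x,y,z)$ satisfying \eqref{eqn:max_sharp_set_dual} automatically satisfies $y \leq \bar{h}(x,z)$ by taking the minimum over $\alpha$ in \eqref{eqn:max_sharp_set_dual-1}, while \eqref{eqn:max_sharp_set_dual-2} coincides with \eqref{eqn:max_sharp_set_primal-2}. The only remaining point to emphasize is the Benders-style caveat already flagged in Section~\ref{sec:ideal_dual}: one may need feasibility-type inequalities (obtained from extreme rays of the Lagrangian dual) alongside the optimality inequalities \eqref{eqn:max_sharp_set_dual-1}, but both are captured in the single parameterized family over all $\alpha \in \mathbb{R}^\eta$.
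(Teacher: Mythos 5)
Your proposal is correct and follows essentially the same route as the paper, which proves this result by repeating the Lagrangian-duality argument of Proposition~\ref{prop:max_ideal_set_dual} with $D_{|k}$ replaced by $D$. One small simplification: the infeasibility/Benders caveat you raise is vacuous here, since for any $(x,z)\in D\times\Delta^d$ the choice $\tx^k = z_k x$ is feasible for the LP defining $\bar{h}(x,z)$, so $\bar{h}$ is finite on the whole domain and the optimality inequalities \eqref{eqn:max_sharp_set_dual-1} alone suffice.
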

\begin{proof}

Follows in an analogous manner as in Proposition~\ref{prop:max_ideal_set_dual}.
\qed\end{proof}

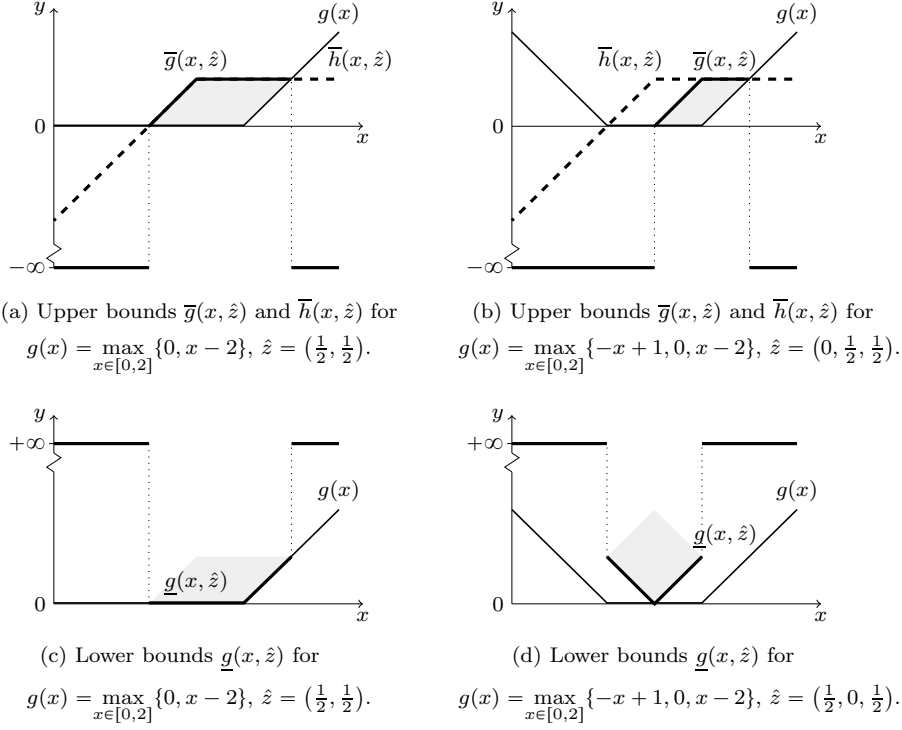
\begin{figure}
    \centering
    \begin{subfigure}[t]{0.44\textwidth}
    \begin{tikzpicture}[scale=1.25]
        \fill[opacity=0.25, fill=lightgray] (1,0) -- (1.5,0.5) -- (2.5,0.5) -- (2,0) -- cycle;

        \draw [->, line width=0.25] (0,0) -- (3.25,0);
        \draw [->, line width=0.25] (0,-1.25) -- (0,1.25);
        \node[left] at (0,1.25) {$y$};
        \node[below] at (3.25,0) {$x$};
        \node[left] at (0, 0) {$0$};
        \coordinate (P1) at (0,0.01);
        \coordinate (P2) at (2,0.01);
        \coordinate (P3) at (3,1);

        \draw[semithick] (P1) -- (P2) -- (P3);

        \draw[very thick] (1,0) -- (1.5,0.5) -- (2.5,0.5);
        \draw[very thick] (0,-1.5) -- (1,-1.5);
        \draw[very thick] (2.5,-1.5) -- (3,-1.5);
        \draw[dotted] (1,-1.5) -- (1,0);
        \draw[dotted] (2.5,-1.5) -- (2.5,0.5);

        \draw[very thick,dashed,dash phase=2pt] (0,-1) -- (1.5,0.5) -- (3,0.5);

        \draw[line width=0.25] (0,-1.25) -- (0.075,-1.3) -- (-0.075,-1.4) -- (0,-1.45) -- (0,-1.5);
        \draw[line width=0.25] (-0.05,-1.5) -- (0,-1.5);
        \node[left] at (0, -1.5) {$-\infty$};

        \node[above] at (1.5,0.5) {$\bar{g}(x, \hat{z})$};
        \node[above right] at (2.8,0.5) {$\bar{h}(x, \hat{z})$};
        
        \node[above] at (3, 1) {\blue{$g(x)$}};
    \end{tikzpicture}
    \caption{Upper bounds $\bar{g}(x,\hat{z})$ and $\bar{h}(x,\hat{z})$ for\\[0.5em] \centering$\blue{g(x) =\!}\displaystyle\max_{x \in [0,2]}\{0, x-2\}$, $\hat{z} = \left(\frac{1}{2},\frac{1}{2}\right)$.}\label{fig:upper_bound_functions_a}
    \end{subfigure} \hspace{2em}
    \begin{subfigure}[t]{0.49\textwidth}
    \begin{tikzpicture}[scale=1.25]
        \fill[opacity=0.25, fill=lightgray] (1.5,0) -- (2,0.5) -- (2.5,0.5) -- (2,0) -- cycle;

        \draw [->, line width=0.25] (0,0) -- (3.25,0);
        \draw [->, line width=0.25] (0,-1.25) -- (0,1.25);
        \node[left] at (0,1.25) {$y$};
        \node[below] at (3.25,0) {$x$};
        \node[left] at (0, 0) {$0$};
        \coordinate (P1) at (0,1);
        \coordinate (P2) at (1,0.01);
        \coordinate (P3) at (2,0.01);
        \coordinate (P4) at (3,1);

        \draw[semithick] (P1) -- (P2) -- (P3) -- (P4);

        \draw[very thick] (1.5,0) -- (2,0.5) -- (2.5,0.5);
        \draw[very thick] (0,-1.5) -- (1.5,-1.5);
        \draw[very thick] (2.5,-1.5) -- (3,-1.5);
        \draw[dotted] (1.5,-1.5) -- (1.5,0);
        \draw[dotted] (2.5,-1.5) -- (2.5,0.5);

        \draw[very thick, dashed,dash phase=2pt] (0,-1) -- (1.5,0.5) -- (3,0.5);

        \draw[line width=0.25] (0,-1.25) -- (0.075,-1.3) -- (-0.075,-1.4) -- (0,-1.45) -- (0,-1.5);
        \draw[line width=0.25] (-0.05,-1.5) -- (0,-1.5);
        \node[left] at (0, -1.5) {$-\infty$};

        \node[above] at (2.25,0.5) {$\bar{g}(x, \hat{z})$};
        \node[above] at (1.25,0.5) {$\bar{h}(x, \hat{z})$};
        
        \node[above] at (3, 1) {\blue{$g(x)$}};
    \end{tikzpicture}
    \caption{Upper bounds $\bar{g}(x,\hat{z})$ and $\bar{h}(x,\hat{z})$ for \\[0.5em] \centering$\blue{g(x) =\!}\displaystyle\max_{x \in [0,2]}\{-x+1, 0, x-2\}$, $\hat{z} = \left(0,\frac{1}{2},\frac{1}{2}\right)$.}\label{fig:upper_bound_functions_b}
    \end{subfigure}\vspace{1em}
    \begin{subfigure}[t]{0.44\textwidth}
    \begin{tikzpicture}[scale=1.25]
        \fill[opacity=0.25, fill=lightgray] (1,0) -- (1.5,0.5) -- (2.5,0.5) -- (2,0) -- cycle;

        \draw [->, line width=0.25] (0,0) -- (3.25,0);
        \draw [line width=0.25] (0,0) -- (0,1.4);
        \node[left] at (0,2) {$y$};
        \node[below] at (3.25,0) {$x$};
        \node[left] at (0, 0) {$0$};
        \coordinate (P1) at (0,0.01);
        \coordinate (P2) at (2,0.01);
        \coordinate (P3) at (3,1);

        \draw[semithick] (P1) -- (P2) -- (P3);

        \draw[very thick] (1,0.01) -- (2,0.01) -- (2.5,0.5);
        \draw[very thick] (0,1.7) -- (1,1.7);
        \draw[very thick] (2.5,1.7) -- (3,1.7);
        \draw[dotted] (1,0) -- (1,1.7);
        \draw[dotted] (2.5,0.5) -- (2.5,1.7);

        \draw[line width=0.25] (0,1.4) -- (-0.075,1.45) -- (0.075,1.55) -- (0,1.6);
        \draw[->,line width=0.25] (0,1.6) -- (0,2);
        \draw[line width=0.25] (-0.05,1.7) -- (0,1.7);
        \node[left] at (0, 1.7) {$+\infty$};

        \node[above] at (1.5,0) {$\ubar{g}(x, \hat{z})$};
        
        \node[above] at (3, 1) {\blue{$g(x)$}};
    \end{tikzpicture}
    \caption{Lower bounds $\ubar{g}(x,\hat{z})$ for\\[0.5em] \centering$\blue{g(x) =\!}\displaystyle\max_{x \in [0,2]}\{0, x-2\}$, $\hat{z} = \left(\frac{1}{2},\frac{1}{2}\right)$.}\label{fig:lower_bound_functions_a}
    \end{subfigure} \hspace{2em}
    \begin{subfigure}[t]{0.49\textwidth}
    \begin{tikzpicture}[scale=1.25]
        \fill[opacity=0.25, fill=lightgray] (1,0.5) -- (1.5,0) -- (2,0.5) -- (1.5,1) -- cycle;

        \draw [->, line width=0.25] (0,0) -- (3.25,0);
        \draw [line width=0.25] (0,0) -- (0,1.4);
        \node[left] at (0,2) {$y$};
        \node[below] at (3.25,0) {$x$};
        \node[left] at (0, 0) {$0$};
        \coordinate (P1) at (0,1);
        \coordinate (P2) at (1,0.01);
        \coordinate (P3) at (2,0.01);
        \coordinate (P4) at (3,1);

        \draw[semithick] (P1) -- (P2) -- (P3) -- (P4);

        \draw[very thick] (1,0.5) -- (1.5,0) -- (2,0.5);
        \draw[very thick] (0,1.7) -- (1,1.7);
        \draw[very thick] (2,1.7) -- (3,1.7);
        \draw[dotted] (1,0.5) -- (1,1.7);
        \draw[dotted] (2,0.5) -- (2,1.7);

        \draw[line width=0.25] (0,1.4) -- (-0.075,1.45) -- (0.075,1.55) -- (0,1.6);
        \draw[->,line width=0.25] (0,1.6) -- (0,2);
        \draw[line width=0.25] (-0.05,1.7) -- (0,1.7);
        \node[left] at (0, 1.7) {$+\infty$};

        \node[above] at (2.25,0.5) {$\ubar{g}(x, \hat{z})$};
        
        \node[above] at (3, 1) {\blue{$g(x)$}};
    \end{tikzpicture}
    \caption{Lower bounds $\ubar{g}(x,\hat{z})$ for\\[0.5em] \centering$\blue{g(x) =\!}\displaystyle\max_{x \in [0,2]}\{-x+1, 0, x-2\}$, $\hat{z} = \left(\frac{1}{2},0,\frac{1}{2}\right)$.}\label{fig:lower_bound_functions_b}
    \end{subfigure}
        \caption{Examples of the functions $\bar{g}(x,z)$, $\bar{h}(x,z)$, and $\ubar{g}(x,z)$ defined in~\eqref{eqn:max_ideal_set_primal} and~\eqref{eqn:max_sharp_set_primal} with some fixed value for $z$. \blue{Here, $g(x)$ is the ``maximum'' function of interest, defined below each subfigure. Figures (a) and (c) depict the case $d = 2$ (maximum of two affine functions), while (b) and (d) illustrate when $d = 3$, each pair emphasizing upper and lower bounds respectively.} Note that $\bar{g}(x,z)$ and $\bar{h}(x,z)$ coincide in (a) for $x \in [1,2.5]$, and in (b) for $x \in [2,2.5]$. The thick solid lines represent $\bar{g}(x,z)$ in (a--b) and $\ubar{g}(x,z)$ in (c--d), whereas the dashed lines correspond to $\bar{h}(x,z)$. The thin solid lines represent $\Smax$ and the shaded region is the slice of $\Rcayley$ with $z=\hat{z}$.}
    \label{fig:upper_bound_functions}
\end{figure}

Figure~\ref{fig:upper_bound_functions} depicts slices of the functions $\bar{g}(x,z)$, $\ubar{g}(x,z)$, and $\bar{h}(x,z)$, created by fixing some value of $z$ and varying $x$. Observe that $\bar{g}(x,z)$ can be viewed as the largest value for $y$ such that $(x,y)$ can be written as a convex combination of points in the graph using convex multipliers $z$. Likewise, $\ubar{g}(x,z)$ can be interpreted as the minimum value for $y$. In $\bar{h}(x,z)$, we relax $D_{|k}$ to $D$, and thus we can interpret it similarly to $\bar{g}(x,z)$, except that we may take convex combinations of points constructed by evaluating the affine functions at any point in the domain, not only those where the given function attains the maximum.  Figure~\ref{fig:upper_bound_functions_b} shows that, in general, $\bar{h}(x,z)$ can be strictly looser than $\bar{g}(x,z)$ for $(x,z) \in \Proj_{x,z}(\Rcayley)$. A similar situation occurs for the lower envelopes as illustrated by Figure~\ref{fig:lower_bound_functions_b}. However, we prove in the next section that this does not occur for $d = 2$, along with other desirable properties in special cases.
\section{Simplifications to our machinery under common special cases} \label{sec:simplifications}

In this section we study how our dual characterizations in Propositions~\ref{prop:max_ideal_set_dual} and \ref{prop:max_sharp_set_dual} simplify under common special cases with the number of input affine functions $d$ and the input domain $D$.

\subsection{Simplifications when $d=2$} \label{sec:d-2}

When we consider taking the maximum of only two affine functions (i.e. $d=2$), we can prove that $\Rsharp$ is, in fact, ideal.

We start by returning to Proposition~\ref{prop:max_ideal_set_primal} and show that it can be greatly simplified when $d = 2$. We first show $\ubar{g}(x,z)$ can be replaced by the maximum of the affine functions as illustrated in Figure~\ref{fig:lower_bound_functions_a}, although it is not possible for $d > 2$ as seen in Figure~\ref{fig:lower_bound_functions_b}.

\begin{lemma}\label{lem:max_2_ideal_lb}
    If $d=2$, then at any values of $(x,z)$ where $\bar{g}(x,z)\ge\ubar{g}(x,z)$ (i.e. there exists a $y$ such that $(x,y,z) \in \Rcayley$), we have
    \begin{align*}
        \ubar{g}(x,z) = \max\{w^1 \cdot x + b^1, w^2 \cdot x + b^2\}.
    \end{align*}
\end{lemma}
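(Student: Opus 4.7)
My plan is to prove two inequalities separately: $\ubar{g}(x,z) \geq \max\{f^1(x), f^2(x)\}$ always, and $\ubar{g}(x,z) \leq \max\{f^1(x), f^2(x)\}$ under the hypothesis $\bar{g}(x,z) \geq \ubar{g}(x,z)$ (which is what guarantees feasibility of the minimization problem defining $\ubar{g}$). Throughout I will use the substitution $\tx^k = z_k x^k$ with $x^k \in D_{|k}$ (handling edge cases where $z_k = 0$ separately), so that the feasibility constraint becomes the convex combination $x = z_1 x^1 + z_2 x^2$ and the objective becomes $z_1 f^1(x^1) + z_2 f^2(x^2)$.

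For the lower bound, I will compute the identity
\[
z_1 f^1(x^1) + z_2 f^2(x^2) - f^1(x) = z_2\bigl(f^2(x^2) - f^1(x^2)\bigr),
\]
which follows from the affinity of $f^1$, the convex combination $x = z_1 x^1 + z_2 x^2$, and $z_1 + z_2 = 1$. Since $x^2 \in D_{|2}$, the right-hand side is nonnegative, so every feasible objective value is at least $f^1(x)$. The symmetric identity with the roles of $1$ and $2$ swapped gives the same bound with $f^2(x)$, yielding $\ubar{g}(x,z) \geq \max\{f^1(x), f^2(x)\}$.

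For the matching upper bound, I will assume without loss of generality that $f^1(x) \geq f^2(x)$ and construct a feasible decomposition attaining $f^1(x)$. By the feasibility hypothesis, pick any $\hat x^1 \in D_{|1}$, $\hat x^2 \in D_{|2}$ with $x = z_1 \hat x^1 + z_2 \hat x^2$; if $z_1 = 0$ or $z_2 = 0$, the result is immediate, so assume both are positive. Parametrize the segment $y(s) = (1-s)\hat x^1 + s\hat x^2$; since $f^1 - f^2$ is affine and changes sign along the segment (nonnegative at $s=0$, nonpositive at $s=1$), there exists $t \in [0,1]$ with $x^\star \defeq y(t) \in B \defeq \Set{y \in D | f^1(y) = f^2(y)}$. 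Since $x = y(z_2)$ and $f^1(x) \geq f^2(x)$, a short affine computation along $y(\cdot)$ shows $z_2 \leq t$.

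Now I will set $\tilde x^2 \defeq x^\star$ and $\tilde x^1 \defeq (x - z_2 x^\star)/z_1$. A direct calculation shows $\tilde x^1 = (1-\alpha)\hat x^1 + \alpha \hat x^2$ with $\alpha = z_2(1-t)/z_1$, and the inequality $z_2 \leq t$ implies $\alpha \leq t$, so $\tilde x^1$ lies on the subsegment from $\hat x^1$ to $x^\star$ and hence in $D_{|1}$. Applying the lower-bound identity to this new decomposition and using $f^1(\tilde x^2) = f^2(\tilde x^2)$ (since $\tilde x^2 \in B$) yields $z_1 f^1(\tilde x^1) + z_2 f^2(\tilde x^2) = f^1(x)$, proving $\ubar{g}(x,z) \leq f^1(x)$. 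The main obstacle—and the step where the restriction $d = 2$ is really used—is verifying that the ``rebalanced'' decomposition stays inside $D_{|1}$; this is exactly where the one-dimensional boundary between $D_{|1}$ and $D_{|2}$ lets me use a single crossing point $x^\star$, a construction that would fail for $d \geq 3$ as illustrated in Figure~\ref{fig:lower_bound_functions_b}.
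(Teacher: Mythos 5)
Your proof is correct, and it is organized differently from the paper's. The paper establishes the lemma with a single argument: it takes an arbitrary feasible decomposition $(x^1,x^2)$, passes to the segment joining $x^1$ and $x^2$ (which contains $x$), locates its crossing point with the hyperplane $\Set{\hat x | f^1(\hat x)=f^2(\hat x)}$, and applies Jensen's inequality to the convex function $\max\{f^1,f^2\}$ to conclude that every feasible objective value is at least $\max\{f^1(x),f^2(x)\}$; the attaining decomposition is left implicit in the constructed pair. You instead split the equality into two inequalities. Your lower bound is more elementary than the paper's: the identity $z_1 f^1(x^1)+z_2 f^2(x^2)-f^\ell(x)=z_k\bigl(f^k(x^k)-f^\ell(x^k)\bigr)$ for $\{k,\ell\}=\{1,2\}$ dispenses with Jensen's inequality and with any auxiliary points. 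Your upper bound then reuses the same geometric ingredient as the paper---the single crossing point $x^\star$ on the segment, which is exactly where $d=2$ enters---with your check $z_2\le t$ (hence $\alpha\le t$, so $\tilde x^1\in D_{|1}$) playing the role of the paper's observations $\hat z^1_1\ge z_1$ and $\hat z^2_1\le z_1$; unlike the paper, you exhibit the attaining decomposition explicitly, so both directions of the claimed equality are on the record. One small point to tighten: if $f^1-f^2$ vanishes identically along the segment, an arbitrarily chosen zero $t$ need not satisfy $z_2\le t$; take $t$ to be the largest zero (or note that in this degenerate case the original decomposition already attains $f^1(x)$), a one-sentence fix. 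With that, and your separate treatment of the $z_k=0$ edge cases, the argument is complete.
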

\begin{proof}
For convenience, we work with the change of variables $x^k \leftarrow \frac{\tx^k}{z_k}$ for each $k \in \llbracket 2 \rrbracket$. Suppose without loss of generality that $x\in D_{|2}$, and consider any feasible solution $(x^1,x^2)$ to the optimization problem for $\ubar{g}(x,z)$, which is feasible by the assumption that $(x,z) \in \Proj_{x,z}(\Rcayley)$. We will show that $(w^1\cdot x^1+b^1)z_1+(w^2\cdot x^2+b^2)z_2\ge w^2\cdot x+b^2$. We assume that $z_2>0$, since otherwise $x=x^1\in D_{|1}\cap D_{|2}$ and the result is immediate.

Since $x=x^1z_1+x^2z_2$ and $z \in \Delta^2$, the line segment joining $x^1$ to $x^2$ contains $x$. Furthermore, since $x^1\in D_{|1}$ and $x^2\in D_{|2}$, this line segment also intersects the hyperplane $\Set{\hat{x}\in\mathbb{R}^\eta | w^1 \cdot \hat{x}+b^1=w^2 \cdot \hat{x}+b^2}$. Let $\hat{x}^1$ denote this point of intersection, and let $\hat{z}^1\in \Delta^2$ be such that $\hat{x}^1=x^1\hat{z}^1_1+x^2\hat{z}^1_2$. Since $x\in D_{|2}$, we know that $\hat{x}^1$ is closer to $x^1$ than $x$, i.e.\ $\hat{z}^1_1 \ge z_1$. Moreover, take the point $\hat{x}^2$ on this line segment such that $x=\hat{x}^1z_1+\hat{x}^2z_2$,
where $\hat{x}^2=x^1\hat{z}^2_1+x^2\hat{z}^2_2$ for some $\hat{z}^2 \in \Delta^2$. We have $\hat{z}^2_1 \le z_1$ since $\hat{x}^2$ is further away from $x^1$ than $x$.  Note that $\hat{x}^1\in D_{|1}\cap D_{|2}$ while $\hat{x}^2\in D_{|2}$, and thus $(\hat{x}^1, \hat{x}^2)$ is feasible.

It can be computed that $\hat{z}^2_1 = z_1\cdot\frac{\hat{z}^1_2}{z_2}$, which implies that $z_1=z_1(\hat{z}^1_1+\hat{z}^1_2)=z_1\hat{z}^1_1+z_2\hat{z}^2_1$ and $z_2=z_2(\hat{z}^2_1+\hat{z}^2_2)=z_1\hat{z}^1_2+z_2\hat{z}^2_2$. Using these two identities, we obtain
\begin{align*}
(w^1\cdot x^1+b^1)z_1+(w^2\cdot x^2+b^2)z_2 
= &\ (w^1\cdot x^1+b^1)(z_1\hat{z}^1_1+z_2\hat{z}^2_1)+(w^2\cdot x^2+b^2)(z_1\hat{z}^1_2+z_2\hat{z}^2_2) \\
= &\ ((w^1\cdot x^1+b^1)\hat{z}^1_1+(w^2\cdot x^2+b^2)\hat{z}^1_2)z_1 \\
& + ((w^1\cdot x^1+b^1)\hat{z}^2_1+(w^2\cdot x^2+b^2)\hat{z}^2_2)z_2 \\
= &\ (f(x^1)\hat{z}^1_1+f(x^2)\hat{z}^1_2)z_1+(f(x^1)\hat{z}^2_1+f(x^2)\hat{z}^2_2)z_2,
\end{align*}
where we let $f(\hat{x})$ denote the function $\max\{w^1\cdot\hat{x}+b^1,w^2\cdot\hat{x}+b^2\}$, recalling that $x^1\in D_{|1}$ and $x^2\in D_{|2}$.  Since $f(\hat{x})$ is convex, by Jensen's inequality the preceding expression is at least
$f(x^1\hat{z}^1_1+x^2\hat{z}^1_2)z_1+f(x^1\hat{z}^2_1+x^2\hat{z}^2_2)z_2$. The preceding expression equals $(w^2\cdot\hat{x}^1+b^2)z_1+(w^2\cdot\hat{x}^2+b^2)z_2$ by the definitions of $\hat{x}^1$ and $\hat{x}^2$, and the fact that they both lie in $D_{|2}$. Recalling the equation $x=\hat{x}^1z_1+\hat{x}^2z_2$ used to select $\hat{x}^2$ completes the proof.
\qed\end{proof}

Moreover, we show that when $d=2$ we can replace $\bar{g}$ with $\bar{h}$, as illustrated in Figure~\ref{fig:upper_bound_functions_a}. This property may not hold when $d > 2$ as shown in Figure~\ref{fig:upper_bound_functions_b}.

\begin{lemma}\label{lem:max_2_ideal_ub}
    If $d = 2$, then at any values of $(x,z)$ where $\bar{g}(x,z)\ge\ubar{g}(x,z)$ (i.e. there exists a $y$ such that $(x,y,z) \in \Rcayley$), we have
\begin{align}\label{eqn:max_2_ideal_ub}
\bar{g}(x,z) = \max_{\tx^1, \tx^2}\Set{w^1 \cdot \tx^1 + b^1 z_1 + w^2 \cdot \tx^2 + b^2 z_2 | \begin{array}{l} x = \tx^1 + \tx^2 \\ \tx^1 \in z_1 \cdot D \\ \tx^2 \in z_2 \cdot D \end{array}}.
\end{align}
\end{lemma}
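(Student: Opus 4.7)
The plan is to prove the nontrivial direction $\bar{h}(x,z) \leq \bar{g}(x,z)$, since the reverse inequality is immediate from $D_{|k} \subseteq D$ (which makes the $\bar{g}$-feasible region a subset of the $\bar{h}$-feasible region). I would start by dispatching the degenerate case where some $z_k=0$ (which forces $\tx^k = \bbzero$ by boundedness of $D$ and reduces both optimization problems to a trivial subcase), so that I may then assume $z_1,z_2>0$.

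Given a fixed optimum $(\tx^1, \tx^2)$ of the LP defining $\bar{h}(x,z)$, I would change variables to $x^k := \tx^k/z_k \in D$, so that $x = z_1 x^1 + z_2 x^2$, and split on the signs of the affine slack $\phi(\hat{x}) \defeq f^1(\hat{x}) - f^2(\hat{x})$ at $x^1$ and $x^2$ (these signs decide membership in $D_{|1}$ or $D_{|2}$). Two cases are essentially routine. If $x^1 \in D_{|1}$ and $x^2 \in D_{|2}$, then $(\tx^1,\tx^2)$ is directly feasible for $\bar{g}$ with the same objective. If instead $x^1, x^2$ lie on the same side (say both in $D_{|1}$, so $f^2 \leq f^1$ there), the affineness of $f^1$ bounds the $\bar{h}$-objective by $f^1(x^1)z_1 + f^1(x^2)z_2 = f^1(x)$, which is at most $f(x) \defeq \max\{f^1(x),f^2(x)\}$; in turn, $f(x) \leq \bar{g}(x,z)$ follows from Jensen's inequality applied to any feasible $(y^1,y^2)$ of $\bar{g}$, using $f^k(y^k) = f(y^k)$ since $y^k \in D_{|k}$ together with the convexity of $f$.

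The hard case is the crossed one, $x^1 \in D_{|2}$, $x^2 \in D_{|1}$, where none of the above bounds apply directly. Here I would use an LP perturbation along the segment joining $x^1$ and $x^2$, namely $\tx^1 \leftarrow \tx^1 + \epsilon z_1(x^2-x^1)$ and $\tx^2 \leftarrow \tx^2 - \epsilon z_1(x^2-x^1)$ for small $\epsilon > 0$. This perturbation preserves the equation $\tx^1+\tx^2=x$ and remains feasible (both new iterates stay in $z_k \cdot D$) for $\epsilon \in [0, \min(1, z_2/z_1)]$, and a short computation shows the objective changes by $\epsilon z_1 (\phi(x^2) - \phi(x^1))$. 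In the crossed regime, $\phi(x^2) \geq 0 \geq \phi(x^1)$, so this change is non-negative and strictly positive unless $\phi(x^1) = \phi(x^2) = 0$; optimality of the starting point therefore forces both $x^1, x^2$ onto the separating hyperplane $\{\phi=0\} \subseteq D_{|1} \cap D_{|2}$, making $(\tx^1,\tx^2)$ feasible for $\bar{g}$ after all.

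The main obstacle is the crossed case: verifying the feasibility window for $\epsilon$ and the sign of the objective change requires some bookkeeping with the $z$-scaling. This is also the step where the argument is genuinely specific to $d=2$, so there is no direct analogue for $d > 2$, consistent with the failure illustrated in Figure~\ref{fig:upper_bound_functions_b}.
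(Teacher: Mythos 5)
Your proof is correct, but it follows a genuinely different route from the paper's. The paper's argument is essentially two lines of algebra: by the feasibility assumption and Lemma~\ref{lem:max_2_ideal_lb}, the relaxed optimum satisfies $\bar{h}(x,z)\ge\bar{g}(x,z)\ge\ubar{g}(x,z)\ge w^2\cdot x+b^2$, and writing this bound out at an optimal $(\tx^1,\tx^2)$ of $\bar{h}$ (with $\tx^2=x-\tx^1$) is \emph{literally equivalent} to the dropped constraint $w^1\cdot\tx^1+b^1z_1\ge w^2\cdot\tx^1+b^2z_1$, i.e.\ $\tx^1\in z_1\cdot D_{|1}$; the symmetric bound with $f^1(x)$ gives $\tx^2\in z_2\cdot D_{|2}$, so every optimizer of $\bar{h}$ is automatically feasible for $\bar{g}$ and hence $\bar{h}\le\bar{g}$. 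You avoid Lemma~\ref{lem:max_2_ideal_lb} entirely (you only re-derive its easy Jensen half, $\bar{g}(x,z)\ge\max\{f^1(x),f^2(x)\}$) and instead argue geometrically: rescale to $x^k=\tx^k/z_k\in D$, split on which side of the hyperplane $\{f^1=f^2\}$ the two points lie, treat the same-side case by affineness plus Jensen, and treat the crossed case by a mass-exchange perturbation along the segment $[x^1,x^2]$ whose objective increment $\epsilon z_1\bigl((f^1-f^2)(x^2)-(f^1-f^2)(x^1)\bigr)$ contradicts optimality unless both points lie on the hyperplane; your feasibility window $\epsilon\in[0,\min\{1,z_2/z_1\}]$, the sign computation, and the dispatch of the $z_k=0$ cases all check out. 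What each buys: the paper's proof is shorter and pinpoints exactly why $d=2$ is special (the single lower bound $f^2(x)$ coincides algebraically with the single missing constraint on $\tx^1$), while yours is self-contained, makes the geometry explicit, is in the same exchange-argument spirit as Case 2 in the proof of Proposition~\ref{prop:max_sharp_set_primal}, and shows transparently why the exchange step has no analogue for $d>2$, consistent with Figure~\ref{fig:upper_bound_functions_b}.
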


\begin{proof}

We show that despite expanding the feasible set of the optimization problem in~\eqref{eqn:max_2_ideal_ub} by replacing $D_{|k}$ by $D$, its optimal value is no greater when $(x,z)$ is such that $\bar{g}(x,z)\ge\ubar{g}(x,z)$.  It suffices to show without loss of generality that $w^1 \cdot \tx^1 + b^1 z_1 \geq w^2 \cdot \tx^1 + b^2 z_1$ holds for any optimal $\tx^1, \tx^2$. By the assumption on $(x,z)$ and Lemma~\ref{lem:max_2_ideal_lb}, we have $\bar{g}(x,z) \geq w^2 \cdot x + b^2$, which implies the existence of some optimal $\tx^1, \tx^2$. That is, we have $w^2 \cdot (x - \tx^1) + b^2 z_2 + w^1 \cdot \tx^1 + b^1 z_1 \geq w^2 \cdot x + b^2$, which is equivalent to $w^1 \cdot \tx^1 + b^1 z_1 \geq w^2 \cdot \tx^1 + b^2 z_1$.
\qed\end{proof}

After observing that these simplifications are identical to those presented in Proposition~\ref{prop:max_sharp_set_dual}, we obtain the following corollary promised at the beginning of the section.

\begin{corollary} \label{cor:max_2_ideal_general}
    When $d=2$, $\Set{(x,y,z) \in \Rsharp | z \in \{0,1\}^d}$ is an ideal MIP formulation of $\Smax$.
\end{corollary}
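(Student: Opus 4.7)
The plan is to establish $\Rsharp = \Rideal$; combined with Proposition~\ref{prop:max_ideal_set_primal}, which gives $\Rideal = \Rcayley = \Conv(\Scayley)$, this shows $\Rsharp$ is the convex hull of integer-$z$ points of the formulation and hence that the formulation is ideal.

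For the easy inclusion $\Rideal \subseteq \Rsharp$, I would note two pointwise relaxations of the defining constraints. First, $\bar{h}(x,z) \geq \bar{g}(x,z)$ because the inner LP defining $\bar{h}$ uses the larger feasible region $z_k \cdot D \supseteq z_k \cdot D_{|k}$, weakening the upper bound on $y$. Second, $\ubar{g}(x,z) \geq \max_k (w^k \cdot x + b^k)$ by Jensen's inequality applied to the convex function $\max_\ell f^\ell$: for any feasible $\tx^k = z_k x^k$ with $x^k \in D_{|k}$, the identity $f^k(x^k) = \max_\ell f^\ell(x^k)$ yields $\sum_k z_k f^k(x^k) \geq \max_\ell f^\ell(x)$, weakening the lower bound.

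For the reverse inclusion $\Rsharp \subseteq \Rideal$, Lemmas~\ref{lem:max_2_ideal_lb} and \ref{lem:max_2_ideal_ub} immediately identify the defining constraints of the two sets, but only at points $(x,z) \in \Proj_{x,z}(\Rcayley)$. The main obstacle is therefore to verify that every $(\hat{x},\hat{y},\hat{z}) \in \Rsharp$ satisfies $(\hat{x},\hat{z}) \in \Proj_{x,z}(\Rcayley)$. I plan to do this by taking $(\tx^1,\tx^2)$ optimal for the LP defining $\bar{h}(\hat{x},\hat{z})$, setting $x^k \defeq \tx^k/\hat{z}_k \in D$ when $\hat{z}_k > 0$, and letting $g \defeq f^1 - f^2$. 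A short computation using the affinity of $f^1,f^2$ yields
\[
\bar{h}(\hat{x},\hat{z}) \;=\; f^2(\hat{x}) + \hat{z}_1\, g(x^1) \;=\; f^1(\hat{x}) - \hat{z}_2\, g(x^2).
\]
The $\Rsharp$ constraints $\max(f^1(\hat{x}), f^2(\hat{x})) \leq \hat{y} \leq \bar{h}(\hat{x},\hat{z})$ then force $\hat{z}_1 g(x^1) \geq 0$ and $\hat{z}_2 g(x^2) \leq 0$, so $x^1 \in D_{|1}$ and $x^2 \in D_{|2}$ whenever the corresponding $\hat{z}_k > 0$; the degenerate cases $\hat{z}_k = 0$ are settled by the same inequalities (e.g., $\hat{z}_1 = 0$ gives $\hat{x} = x^2 \in D$ with $\bar{h}(\hat{x}, \mathbf{e}^2) = f^2(\hat{x}) \geq f^1(\hat{x})$, so $\hat{x} \in D_{|2}$). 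The decomposition $\hat{x} = \hat{z}_1 x^1 + \hat{z}_2 x^2$ then certifies $(\hat{x},\hat{z}) \in \Proj_{x,z}(\Rcayley)$, and the two lemmas deliver $\hat{y} \leq \bar{g}(\hat{x},\hat{z})$ and $\hat{y} \geq \ubar{g}(\hat{x},\hat{z})$, completing the inclusion.

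The crux is the sign-based assignment in the reverse inclusion, which exploits the specific feature of $d=2$ that the ordering between $f^1$ and $f^2$ is controlled by a single affine function $g$, producing a clean partition of the optimizers into $D_{|1}$ and $D_{|2}$. This is exactly the structural feature that breaks for $d > 2$, consistent with the gap between $\bar{g}$ and $\bar{h}$ depicted in Figure~\ref{fig:upper_bound_functions_b}.
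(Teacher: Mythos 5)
Your proof is correct, and its skeleton is the same as the paper's: establish $\Rsharp=\Rideal$, then invoke Proposition~\ref{prop:max_ideal_set_primal} to get $\Rideal=\Rcayley$, whose integer-$z$ points give an ideal formulation. The difference is in how the identity $\Rsharp=\Rideal$ is justified. The paper's proof is a single sentence citing Lemmas~\ref{lem:max_2_ideal_lb}--\ref{lem:max_2_ideal_ub}, but both lemmas are stated only under the hypothesis $(x,z)\in\Proj_{x,z}(\Rcayley)$, so on their own they compare the two sets only over slices where the Cayley embedding is nonempty; the inclusion $\Rsharp\subseteq\Rideal$ additionally requires knowing that no point of $\Rsharp$ lives over a slice with $\bar{g}(x,z)<\ubar{g}(x,z)$. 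You supply exactly that missing verification: taking an optimizer $(\tx^1,\tx^2)$ of $\bar{h}(\hat{x},\hat{z})$, writing $\bar{h}(\hat{x},\hat{z})=f^2(\hat{x})+\hat{z}_1 g(x^1)=f^1(\hat{x})-\hat{z}_2 g(x^2)$ with $g=f^1-f^2$, and reading off from $\max_k f^k(\hat{x})\le\hat{y}\le\bar{h}(\hat{x},\hat{z})$ that $x^1\in D_{|1}$ and $x^2\in D_{|2}$ (with the $\hat{z}_k=0$ cases handled separately), which certifies $(\hat{x},\hat{z})\in\Proj_{x,z}(\Rcayley)$ before the lemmas are applied; your Jensen argument for $\Rideal\subseteq\Rsharp$ (valid for all $d$) likewise makes the easy direction explicit rather than implicit. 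So the two arguments buy different things: the paper's proof is maximally terse and leans on the lemmas' hypotheses being ``morally'' satisfied, while yours is self-contained, closes that hypothesis gap, and isolates precisely the $d=2$ feature (a single affine function $g$ governing the ordering of $f^1,f^2$) that fails for $d>2$, consistent with Figure~\ref{fig:upper_bound_functions_b}.
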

\proof{}
Lemmas~\ref{lem:max_2_ideal_lb} and \ref{lem:max_2_ideal_ub} imply that $\Rsharp = \Rideal$, while Proposition~\ref{prop:max_ideal_set_primal} implies that $\Rideal = \Rcayley$, completing the chain and giving the result.
\qed\endproof


In later sections, we will study conditions under which we can produce an explicit inequality description for $\Rsharp$.

    



\subsection{Simplifications when $D$ is the product of simplices} \label{sec:product-of-simplices}

In this section, we consider another important special case: when the input domain is the Cartesian product of simplices. Indeed, the box domain case introduced in Section~\ref{sec:relu} can be viewed as a product of two-dimensional simplices, and we will also see in Section~\ref{ssec:one-hot-encoding} that this structure naturally arises in machine learning settings with categorical or discrete features.

When $D$ is the product of simplices, we can derive a finite representation for the the set \eqref{eqn:max_sharp_set_dual} (i.e. a finite representation for the infinite family of linear inequalities \eqref{eqn:max_sharp_set_dual-1}) through an elegant connection with the transportation problem. To do so, we introduce the following notation.
\begin{definition} \label{defn:prodSimpl}
Suppose the input domain is $D = \prod_{i=1}^\tau \Delta^{p_i}$, with $p_1+\cdots+p_{\tau}=\eta$. For notational simplicity, we re-organize the indices of $x$ and refer to its entries via $x_{i,j}$, where $i\in\llbracket\tau\rrbracket$ is the simplex index, and $j\in\llbracket p_i\rrbracket$ refers to the coordinate within simplex $i$.
The domain for $x$ is then
\begin{align} \label{eqn:simplexDef}
D=\Set{ ((x_{i,j})_{j=1}^{p_i})_{i=1}^{\tau} | (x_{i,j})_{j=1}^{p_i} \in \Delta^{p_i} \quad \forall i \in \llbracket \tau \rrbracket },
\end{align}
where the rows of $x$ correspond to each simplex. Correspondingly, we re-index the weights of the affine functions so that for each $k\in\llbracket d\rrbracket$, we have
$f^k(x)=\sum_{i=1}^{\tau}\sum_{j=1}^{p_i}w^k_{i,j}x_{i,j}+b^k$.
\end{definition}

Using the notation from Definition~\ref{defn:prodSimpl}, constraints~(\ref{eqn:max_sharp_set_dual-1}) can be written as
\begin{align*}
y\leq\sum_{i=1}^{\tau}\sum_{j=1}^{p_i}\alpha_{i,j}x_{i,j} + \sum_{k=1}^d \left(\max_{x^k \in D}\sum_{i=1}^{\tau}\sum_{j=1}^{p_i}(w^k_{i,j} - \alpha_{i,j})x^k_{i,j} + b^k\right)z_k\quad\forall\alpha\in\mathbb{R}^{\eta}.
\end{align*}
Since $D$ is a product of simplices, the maximization over $x^k\in D$ appearing in the right-hand side above is separable over each simplex $i \in \llbracket \tau \rrbracket$. Moreover, for each simplex $i$, the maximum value of $\sum_{j=1}^{p_i}(w^k_{ij} - \alpha_{ij})x^k_{ij}$, subject to the constraint $x^k \in D$, is obtained when $x^k_{ij}=1$ for some $j\in\llbracket p_i\rrbracket$.
Therefore, the family of constraints~(\ref{eqn:max_sharp_set_dual-1}) is equivalent to
\begin{align}
y &\leq\min_{\alpha}\left(\sum_{i=1}^{\tau}\sum_{j=1}^{p_i}\alpha_{i,j}x_{i,j} + \sum_{k=1}^d \left(\sum_{i=1}^{\tau}\max_{j=1}^{p_i}(w^k_{i,j} - \alpha_{i,j}) + b^k\right)z_k\right) \nonumber \\
&=\sum_{i=1}^{\tau}\min_{\alpha_{i,1},\ldots,\alpha_{i,p_i}}\left(\sum_{j=1}^{p_i}\alpha_{i,j}x_{i,j} + \sum_{k=1}^dz_k\cdot\max_{j=1}^{p_i}(w^k_{i,j} - \alpha_{i,j})\right)+\sum_{k=1}^db^kz_k. \label{eqn:sharp_dual_prod_simpl}
\end{align}

We show that the minimization problem in (\ref{eqn:sharp_dual_prod_simpl}), for any $i$, is equivalent to a transportation problem defined as follows.
\begin{definition} \label{def:trans}
For any values $x \in \Delta^p$ and $z \in \Delta^d$, and arbitrary weights $w^k_j\in\mathbb{R}$ for all $j\in\llbracket p\rrbracket$ and $k\in\llbracket d \rrbracket$, define the \textit{max-weight transportation problem} to be
\begin{alignat*}{2}
\trans(x,z;w^1,\ldots,w^d) \defeq \max_{\beta \geq 0}\Set{\sum_{k=1}^d\sum_{j=1}^pw^k_j\beta^k_j \;|\; \begin{alignedat}{3}\sum_{k=1}^d\beta^k_j &=x_j \quad\forall j\in\llbracket p\rrbracket \\
\sum_{j=1}^p\beta^k_j &=z_k \quad\forall k\in\llbracket d\rrbracket\end{alignedat}}
\end{alignat*}
\end{definition}
In the transportation problem, since $\sum_jx_j=1=\sum_kz_k$, it follows that $\beta^k_j \in [0,1]$, and so this value can be interpreted as the percent of total flow ``shipped" between $j$ and $k$.
The relation to~(\ref{eqn:sharp_dual_prod_simpl}) is now established through LP duality.

\begin{proposition} \label{prop:transportationDuality}
For any fixed $x \in \Delta^{p}$ and $z \in \Delta^d$,
\begin{align} \label{eqn:suppress_i}
    \min_{\alpha} \left( \sum_{j=1}^{p}\alpha_{j}x_{j} + \sum_{k=1}^dz_k\cdot\max_{j=1}^p(w^k_{j} - \alpha_{j}) \right) = \trans(x,z;w^1,\ldots,w^d).
\end{align}
Therefore, when $D$ is a product of simplices, the constraints \eqref{eqn:max_sharp_set_dual-1} can be replaced in \eqref{eqn:max_sharp_set_dual} with the single inequality
\begin{align} \label{eqn:unsupress_i}
y\le\sum_{i=1}^{\tau}\trans((x_{i,j})_{j=1}^{p_i},z;(w^1_{i,j})_{j=1}^{p_i},\ldots,(w^d_{i,j})_{j=1}^{p_i})+\sum_{k=1}^db^kz_k.
\end{align}
\end{proposition}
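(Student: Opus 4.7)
My plan is to prove the identity \eqref{eqn:suppress_i} by taking the LP dual of the transportation problem and then to derive \eqref{eqn:unsupress_i} directly from \eqref{eqn:sharp_dual_prod_simpl}.

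For the main identity, I would first observe that $\trans(x,z;w^1,\ldots,w^d)$ is the optimal value of a linear program that is both feasible (the product solution $\beta^k_j = x_j z_k$ satisfies all constraints since $\sum_j x_j = \sum_k z_k = 1$) and bounded (each $\beta^k_j \in [0,1]$ by the coupling constraints). Hence LP strong duality applies. Assigning dual multipliers $\alpha_j$ to the constraints $\sum_k \beta^k_j = x_j$ and $\gamma_k$ to the constraints $\sum_j \beta^k_j = z_k$, the dual reads
\begin{align*}
\trans(x,z;w^1,\ldots,w^d) = \min_{\alpha,\gamma} \Set{\sum_{j=1}^p \alpha_j x_j + \sum_{k=1}^d \gamma_k z_k \;|\; \alpha_j + \gamma_k \geq w^k_j \;\; \forall j \in \llbracket p \rrbracket,\, k \in \llbracket d \rrbracket}.
\end{align*}

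Next, I would partially optimize out the $\gamma$ variables. For any fixed $\alpha$, the constraints on $\gamma_k$ decouple across $k$ and take the form $\gamma_k \geq w^k_j - \alpha_j$ for every $j$. Since $z_k \geq 0$ and the objective is being minimized, the optimal choice is $\gamma_k = \max_{j \in \llbracket p \rrbracket}(w^k_j - \alpha_j)$. Substituting this back into the dual objective yields exactly the left-hand side of \eqref{eqn:suppress_i}, establishing the identity.

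For the second assertion, I would simply apply \eqref{eqn:suppress_i} inside each term of the outer sum over $i \in \llbracket \tau \rrbracket$ in the already-derived expression \eqref{eqn:sharp_dual_prod_simpl}, recalling that \eqref{eqn:sharp_dual_prod_simpl} was shown to be an equivalent rewriting of the family of inequalities \eqref{eqn:max_sharp_set_dual-1} under the product-of-simplices assumption. The resulting single inequality is \eqref{eqn:unsupress_i}.

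There is no real obstacle here beyond verifying strong duality and carefully justifying the partial minimization over $\gamma$; the rest is bookkeeping. The only subtlety worth flagging in the write-up is that the reduction from the infinite family in \eqref{eqn:max_sharp_set_dual-1} to the single constraint \eqref{eqn:unsupress_i} depends on the separability of $\max_{x^k \in D}$ across simplex blocks, which was used to obtain \eqref{eqn:sharp_dual_prod_simpl} and which I would briefly invoke rather than re-derive.
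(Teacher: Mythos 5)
Your proposal is correct and follows essentially the same route as the paper: the paper introduces epigraph variables $\gamma_k$ for the inner maxima and dualizes that minimization LP to obtain the transportation problem, while you dualize the transportation LP and then eliminate $\gamma$ by partial minimization, which is the same duality pairing run in the opposite direction (and your justification of strong duality and of the choice $\gamma_k = \max_j (w^k_j - \alpha_j)$ is sound, including the degenerate case $z_k = 0$ where the objective value is unaffected). The derivation of \eqref{eqn:unsupress_i} by substituting \eqref{eqn:suppress_i} into \eqref{eqn:sharp_dual_prod_simpl} simplex by simplex also matches the paper.
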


\begin{proof}
By using a variable $\gamma_{k}$ to model the value of $\max_{j=1}^{p}(w^k_{j} - \alpha_{j})$ for each $k\in\llbracket d\rrbracket$, the minimization problem on the LHS of~\eqref{eqn:suppress_i} is equivalent to
\begin{alignat*}{2}
\min_{\alpha,\gamma}\Set{\sum_{j=1}^{p}\alpha_{j}x_{j}+\sum_{k=1}^d\gamma_{k}z_k \;|\;
\gamma_{k} \ge w^k_{j}-\alpha_{j} \quad\forall k\in\llbracket d\rrbracket,j\in\llbracket p\rrbracket}
\end{alignat*}
which is a minimization LP with free variables $\alpha_j$ and $\gamma_k$.
Applying LP duality, this completes the proof of equation~\eqref{eqn:suppress_i}. The inequality
\eqref{eqn:unsupress_i} then arises by substituting equation~\eqref{eqn:suppress_i} into~\eqref{eqn:sharp_dual_prod_simpl}, for every simplex $i=1,\ldots,\tau$.
\qed\end{proof}

\subsection{Simplifications when both $d=2$ and $D$ is the product of simplices} \label{sec:d-2-product-of-simplices}

Proposition~\ref{prop:transportationDuality} shows that, when the input domain is a product of simplices, the tightest upper bound on $y$ can be computed through a series of transportation problems. We now leverage the fact that if either side of the transportation problem from Definition~\ref{def:trans} (i.e. $p$ or $d$) has only two entities, then it reduces to a simpler fractional knapsack problem.
Later, this will allow us to represent \eqref{eqn:sharp_dual_prod_simpl}, in either of the cases $d=2$ or $p_1=\cdots=p_{\tau}=2$, using an explicit finite family of linear inequalities in $x$ and $z$ which has a greedy linear-time separation oracle.
\begin{proposition} \label{prop:simplicesD2Explicit}
Given data $w^1, w^2 \in \bbR^p$, take $\tw_j = w^1_j-w^2_j$ for all $j\in\llbracket p\rrbracket$, and suppose the indices have been sorted so that
$\tw_1\le\cdots\le\tw_p$.
Then
\begin{align} \label{eqn:trans-is-fractional-knapsack}
\trans(x,z;w^1,w^2)=\min_{J=1}^{p}\left(\tw_Jz_1+\sum_{j=J+1}^{p}(\tw_j-\tw_J)x_j\right)+\sum_{j=1}^{p}w^2_jx_j.
\end{align}
Moreover, a $J \in \llbracket p \rrbracket$ that attains the minimum in the right-hand side of \eqref{eqn:trans-is-fractional-knapsack} can be found in $\mathcal{O}(p)$ time.
\end{proposition}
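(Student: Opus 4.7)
The plan is to eliminate one side of the bipartite flow variables, reducing $\trans(x,z;w^1,w^2)$ to a fractional knapsack LP whose closed-form value is then obtained by LP duality. Concretely, the column constraints $\beta^1_j + \beta^2_j = x_j$ determine $\beta^2_j = x_j - \beta^1_j$, and the row constraint $\sum_j \beta^2_j = z_2$ follows automatically from $\sum_j x_j = 1 = z_1 + z_2$. Substituting and collecting the constant term, the transportation problem simplifies to
\begin{equation*}
\trans(x,z;w^1,w^2) \;=\; \sum_{j=1}^p w^2_j x_j \;+\; \max_{\beta^1}\Set{\sum_{j=1}^p \tw_j \beta^1_j | 0 \leq \beta^1_j \leq x_j,\ \sum_{j=1}^p \beta^1_j = z_1},
\end{equation*}
i.e., a bounded fractional knapsack with values $\tw_j$, capacities $x_j$, and total budget $z_1$.

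Next, I would take the LP dual of the inner maximization, with multiplier $\mu \in \bbR$ on the equality and $\lambda_j \geq 0$ on the upper bounds. Minimizing out $\lambda_j = \max\{0, \tw_j - \mu\}$ collapses the dual to the one-variable convex piecewise-linear program
\begin{equation*}
\min_{\mu \in \bbR} \; \mu\, z_1 \;+\; \sum_{j=1}^p \max\{0,\ \tw_j - \mu\}\, x_j.
\end{equation*}
The crux of the argument is that this minimum is attained at a breakpoint $\mu = \tw_J$ for some $J \in \llbracket p \rrbracket$: the left-limit slope as $\mu \to -\infty$ is $z_1 - \sum_j x_j = -z_2 \leq 0$ and the right-limit slope as $\mu \to +\infty$ is $z_1 \geq 0$, so by convexity and piecewise-linearity a minimizer occurs at one of the breakpoints $\tw_1, \ldots, \tw_p$. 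Plugging in $\mu = \tw_J$ and using the sorted order $\tw_1 \leq \cdots \leq \tw_p$, so that $\max\{0, \tw_j - \tw_J\} = \tw_j - \tw_J$ exactly when $j > J$ (with any tied index $\tw_j = \tw_J$ contributing zero), recovers precisely the expression on the right-hand side of \eqref{eqn:trans-is-fractional-knapsack}.

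For the $\mathcal{O}(p)$ claim, I would sweep $J$ from $p$ down to $1$, maintaining the two suffix aggregates $S_J \defeq \sum_{j=J+1}^p x_j$ and $T_J \defeq \sum_{j=J+1}^p \tw_j x_j$, each updated in constant time from the previous $J$; the candidate value at index $J$ then evaluates in $\mathcal{O}(1)$ as $\tw_J z_1 + T_J - \tw_J S_J$. Since the input is already assumed sorted, a single linear pass identifies the minimizer. The subtlest step of the whole argument is the breakpoint-optimality claim for the dual in $\mu$; everything else is routine LP duality and bookkeeping.
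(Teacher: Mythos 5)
Your proof is correct, and after the shared first step (eliminating $\beta^2_j = x_j - \beta^1_j$ to reduce $\trans(x,z;w^1,w^2)$ to a bounded fractional knapsack, which is exactly how the paper begins) you diverge from the paper's argument. The paper stays in the primal: it exhibits the explicit greedy optimal solution, taking $J$ to be the maximum index with $\sum_{j=J}^p x_j \geq z_1$, setting $\beta_j = x_j$ for $j > J$, $\beta_J = z_1 - \sum_{j>J} x_j$, $\beta_j = 0$ for $j < J$, and reads off its cost, which matches the right-hand side at that particular $J$ (with the other indices $J$ furnishing upper bounds on the maximum, which is how the ``$\min_J$'' form is recovered). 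You instead dualize the knapsack to the one-variable problem $\min_\mu \,\mu z_1 + \sum_j \max\{0,\tw_j-\mu\}x_j$ and argue by convexity and the slope limits $-z_2 \leq 0$ and $z_1 \geq 0$ that a minimizer sits at a breakpoint $\tw_J$, so the minimum over $\mu$ equals the minimum over $J \in \llbracket p \rrbracket$ of the stated expression (your handling of ties $\tw_j = \tw_J$ is the right observation to make this exact). Your route establishes the ``$\min_J$'' representation directly and cleanly, at the cost of invoking strong duality (which holds here since the knapsack is feasible, e.g. $\beta_j = z_1 x_j$, and bounded); the paper's primal greedy route has the side benefit of identifying \emph{which} $J$ attains the minimum, which is what drives the running-total $\mathcal{O}(p)$ separation used later. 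Your suffix-sum sweep over all $J$ achieves the same $\mathcal{O}(p)$ bound, so the complexity claim is also fine.
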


\begin{proof}
When $d=2$, the transportation problem becomes
\begin{alignat}{2} \label{eqn:trans-opt-proof}
\max_{\beta^1,\beta^2 \geq 0}\Set{\sum_{j=1}^p(w^1_j\beta^1_j+w^2_j\beta^2_j)\;|\;\beta^1_j+\beta^2_j =x_j \quad\forall j\in\llbracket p\rrbracket,\quad \sum_{j=1}^p\beta^1_j =z_1}
\end{alignat}
where the constraint $\sum_{j=1}^p\beta^2_j=z_2$ is implied because
$\sum_{j=1}^p\beta^2_j=\sum_{j=1}^p(x_j-\beta^1_j)=1-\sum_{j=1}^p\beta^1_j=1-z_1=z_2$.
Substituting $\beta^2_j=x_j-\beta^1_j$ for all $j \in \llbracket p \rrbracket$ and then omitting the superscript ``1'', \eqref{eqn:trans-opt-proof} becomes
\begin{alignat*}{2}
\max_{\beta}\Set{\sum_{j=1}^p(w^1_j-w^2_j)\beta_j+\sum_{j=1}^pw^2_jx_j\;|\;\sum_{j=1}^p\beta_j =z_1,\quad  0\le\beta_j \le x_j \quad \forall j\in\llbracket p\rrbracket}
\end{alignat*}
which is a fractional knapsack problem that can be solved greedily.

An optimal solution to the fractional knapsack LP above, assuming the sorting $w^1_1-w^2_1\le\cdots\le w^1_p-w^2_p$, can be computed greedily. Let $J\in\llbracket p\rrbracket$ be the \textit{maximum} index at which $\sum_{j=J}^px_j\ge z_1$.
We set $\beta_j = 0$ for all $j < J$, $\beta_J = z_1 - \sum_{j=J+1}^p x_j$, and $\beta_j = x_j$ for each $j > J$.
The optimal cost is
\begin{align*}
\sum_{j=J+1}^p(w^1_j-w^2_j)x_j+(w^1_J-w^2_J)\left(z_1-\sum_{j=J+1}^px_j\right)+\sum_{j=1}^pw^2_jx_j,
\end{align*}
which yields the desired expression after substituting $\tw_j=w^1_j-w^2_j$ for all $j\ge J$.
Moreover, the index $J$ above can be found in $\mathcal{O}(p)$ time by storing a running total for $\sum_{j=J}^p x_j$, completing the proof.
\qed\end{proof}

Observe that the $\mathcal{O}(p)$ runtime in Proposition~\ref{prop:simplicesD2Explicit} is non-trivial, as a na\"{i}ve implementation would run in time $\mathcal{O}(p^2)$, as the inner sum is linear in $p$.

Combining Propositions \ref{prop:transportationDuality} and \ref{prop:simplicesD2Explicit} immediately yields the following result.
\begin{corollary} \label{cor:simplicesD2Result}
Suppose that $d=2$ and that $D$ is a product of simplices.  Let $z = z_1 \equiv 1-z_2$.
For each simplex $i \in \llbracket \tau \rrbracket$, take $\tw_{i,j} = w^1_{i,j}-w^2_{i,j}$ for all $j=1,\ldots,p_i$ and relabel the indices so that $\tw_{i,1}\le\cdots\le\tw_{i,p_i}$.
Then, in the context of \eqref{eqn:max_sharp_set_dual}, the upper-bound constraints \eqref{eqn:max_sharp_set_dual-1} are equivalent to
\begin{align}
&y\le\sum_{i=1}^{\tau}\left(\tw_{i,J(i)}z+\sum_{j=J(i)+1}^{p_i}(\tw_{i,j}-\tw_{i,J(i)})x_{i,j}+\sum_{j=1}^{p_i}w^2_{i,j}x_{i,j}\right)+(b^1-b^2)z+b^2 \nonumber \\
&\hspace{9em}\forall\text{ mappings $J:\llbracket\tau\rrbracket\to\mathbb{Z}$ with $J(i)\in\llbracket p_i\rrbracket\ \forall i\in\llbracket\tau\rrbracket$.} \label{eqn:simplicesD2upperBound}
\end{align}
Moreover, given any point $(x,y,z)\in D\times\mathbb{R}\times[0,1]$, feasibility can be verified or a most violated constraint can be found in $\mathcal{O}(p_1+\cdots+p_{\tau})$ time.
\end{corollary}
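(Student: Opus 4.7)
The plan is to chain together Propositions~\ref{prop:transportationDuality} and \ref{prop:simplicesD2Explicit}, then translate the resulting ``minimum of transportation values'' characterization into an explicit, linear family indexed by mappings $J$.

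First, I would apply Proposition~\ref{prop:transportationDuality} directly, which replaces the infinite family of inequalities \eqref{eqn:max_sharp_set_dual-1} with the single nonlinear inequality \eqref{eqn:unsupress_i}. Since $d=2$, each transportation term in that inequality can then be evaluated in closed form by Proposition~\ref{prop:simplicesD2Explicit}, so for each simplex $i\in\llbracket\tau\rrbracket$ we obtain
\[
\trans\bigl((x_{i,j})_j, z; (w^1_{i,j})_j,(w^2_{i,j})_j\bigr) = \min_{J(i)\in\llbracket p_i\rrbracket}\left(\tw_{i,J(i)}z_1 + \sum_{j=J(i)+1}^{p_i}(\tw_{i,j}-\tw_{i,J(i)})x_{i,j}\right) + \sum_{j=1}^{p_i}w^2_{i,j}x_{i,j},
\]
after relabeling the entries of each simplex in sorted order.

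Second, I would use the elementary observation that for any real numbers $a_1,\ldots,a_\tau$ and finite sets $\mathcal{J}_i$ of affine functions of $(x,z)$, the scalar inequality $y \le \sum_{i=1}^\tau \min_{h \in \mathcal{J}_i} h(x,z) + a_i$ is equivalent to the finite family $y \le \sum_{i=1}^\tau h_i(x,z) + a_i$ taken over all selections $(h_1,\ldots,h_\tau)\in \prod_i \mathcal{J}_i$. Applying this with $\mathcal{J}_i$ indexed by the $p_i$ choices for $J(i)$ converts the single ``min'' inequality into the claimed family indexed by mappings $J:\llbracket\tau\rrbracket\to\mathbb{Z}$ with $J(i)\in\llbracket p_i\rrbracket$. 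Substituting $z_1=z$ and $z_2=1-z$ absorbs the bias term $b^1z_1+b^2z_2$ into $(b^1-b^2)z+b^2$, producing exactly \eqref{eqn:simplicesD2upperBound}.

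For the separation claim, given $(x,y,z)\in D\times\mathbb{R}\times[0,1]$, the most violated inequality in \eqref{eqn:simplicesD2upperBound} is the one whose right-hand side is minimized over all $J$. Because this right-hand side decomposes as a sum of terms that each depend only on $J(i)$, the minimization separates across simplices: for each $i$ we solve a one-dimensional problem over $J(i)\in\llbracket p_i\rrbracket$ via the greedy fractional-knapsack routine guaranteed by Proposition~\ref{prop:simplicesD2Explicit} in $\mathcal{O}(p_i)$ time. Summing over $i$ gives the overall $\mathcal{O}(p_1+\cdots+p_\tau)$ bound; comparing $y$ to the resulting minimum either certifies feasibility or returns a most violated cut.

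Because both Propositions~\ref{prop:transportationDuality} and \ref{prop:simplicesD2Explicit} already do the substantive work, the proof is essentially bookkeeping. The only subtle point I would be careful about is the substitution $z_1=z,\ z_2=1-z$ and the per-simplex sorted relabeling, making sure they commute with the separability used in step two; neither introduces a genuine obstacle.
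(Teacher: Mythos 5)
Your proposal is correct and follows essentially the same route as the paper, which simply notes that combining Propositions~\ref{prop:transportationDuality} and \ref{prop:simplicesD2Explicit} immediately yields the corollary: the per-simplex transportation values are replaced by their closed-form minima, the sum of minima is expanded into the family indexed by mappings $J$, and separation decomposes across simplices at $\mathcal{O}(p_i)$ cost each. Your bookkeeping of the substitution $z_1=z$, $z_2=1-z$ and the bias term matches the intended argument.
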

Corollary~\ref{cor:simplicesD2Result} gives an explicit finite family of linear inequalities equivalent to \eqref{eqn:max_sharp_set_dual}. Moreover, we have already shown in Corollary~\ref{cor:max_2_ideal_general} that $\Rsharp$ yields an ideal formulation when $d=2$. Hence, we have an ideal nonextended formulation whose exponentially-many inequalities can be separated in $\mathcal{O}(p_1+\cdots+p_{\tau})$ time, where the initial sorting requires $\mathcal{O}(p_1\log p_1+\cdots+p_{\tau}\log p_{\tau})$ time. Note that this sorting can be avoided: we may instead solve the fractional knapsack problem in the separation via weighted median in $\mathcal{O}(p_1+\cdots+p_{\tau})$ time~\cite[Chapter 17.1]{Korte:2000}.

We can also show that none of the constraints in~\eqref{eqn:simplicesD2upperBound} are redundant.
\begin{proposition} \label{prop:simplicesD2FacetDefining}
Consider the polyhedron $P$ defined as the intersection of all halfspaces corresponding to the inequalities \eqref{eqn:simplicesD2upperBound}. Consider some arbitrary mapping $J : \llbracket \tau \rrbracket \to \bbZ$ with $J(i) \in \llbracket p_i \rrbracket$ for each $i \in \llbracket \tau \rrbracket$. Then the inequality in \eqref{eqn:simplicesD2upperBound} for $J$ is irredundant with respect to $P$. That is, removing the halfspace corresponding to mapping $J$ (note that this halfspace could also correspond to other mappings) from the description of $P$ will strictly enlarge the feasible set.

\end{proposition}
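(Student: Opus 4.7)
The plan is to exhibit, for each fixed mapping $J$, a point $(x^*, y^*, z^*)$ that lies in every halfspace of \eqref{eqn:simplicesD2upperBound} except the one indexed by $J$. Since the $J$-dependent part of the right-hand side of \eqref{eqn:simplicesD2upperBound} separates across simplices as $\sum_{i=1}^{\tau} G_i(J(i); x, z)$, where
\begin{equation*}
    G_i(J^*; x, z) \defeq \tw_{i,J^*}\, z + \sum_{j=J^*+1}^{p_i}(\tw_{i,j} - \tw_{i,J^*})\, x_{i,j},
\end{equation*}
the construction will be carried out simplex by simplex and then aggregated. The central algebraic tool is the telescoping identity
\begin{equation*}
    G_i(J^*; x, z) - G_i(J^* + 1; x, z) = (\tw_{i, J^* + 1} - \tw_{i, J^*})\left(\sum_{j = J^* + 1}^{p_i} x_{i,j} - z\right),
\end{equation*}
which I would derive by direct expansion.

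Iterating this identity yields two useful observations. First, two mappings $J, J'$ produce the same halfspace in \eqref{eqn:simplicesD2upperBound} if and only if $G_i(J(i); \cdot) \equiv G_i(J'(i); \cdot)$ as functions of $(x,z)$ for every $i$, which in turn happens precisely when $\tw_{i,j}$ is constant across all indices $j$ strictly between $J(i)$ and $J'(i)$ (the ``only if'' direction follows from linear independence of the summands appearing in the telescope). Second, the sign of $G_i(J^*; x, z) - G_i(J(i); x, z)$ is governed entirely by the position of $z$ relative to the tail sums $\sum_{j=k}^{p_i} x_{i,j}$.

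Next I would construct $(x^*, z^*)$ explicitly: set $z^* = 1/2$, and for each simplex $i$ put $x^*_{i, J(i)} = 1 - (p_i - 1)\epsilon$ and $x^*_{i,j} = \epsilon$ for $j \neq J(i)$, choosing $\epsilon > 0$ small enough (e.g.\ $\epsilon < 1/(2 p_i)$ for every $i$) that
\begin{equation*}
    \sum_{j=J(i)+1}^{p_i} x^*_{i,j} < z^* < \sum_{j=J(i)}^{p_i} x^*_{i,j}
\end{equation*}
holds at every $i$. Telescoping then shows $G_i(J(i); x^*, z^*) \le G_i(J^*; x^*, z^*)$ for all $J^* \in \llbracket p_i \rrbracket$, with strict inequality precisely when $G_i(J^*; \cdot) \not\equiv G_i(J(i); \cdot)$. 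Summing across simplices: for any $J'$ defining a halfspace distinct from that of $J$, at least one simplex $i^*$ satisfies $G_{i^*}(J'(i^*); \cdot) \not\equiv G_{i^*}(J(i^*); \cdot)$ and therefore contributes a strictly positive gap at $(x^*, z^*)$, while the remaining simplices contribute equal or larger amounts. Consequently $\ell_J(x^*, z^*) < \ell_{J'}(x^*, z^*)$ for every such $J'$, and choosing $y^*$ in the nonempty open interval between $\ell_J(x^*, z^*)$ and the next smallest value among the $\ell_{J'}(x^*, z^*)$ gives a point in $P^{-J} \setminus P$.

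The main obstacle I anticipate is bookkeeping the redundancy created by ties among the $\tw_{i,j}$'s, since many distinct mappings can yield the same halfspace and the argument must assert strict inequalities only between genuinely distinct halfspaces. The telescoping identity handles this uniformly: the vanishing of $G_i$-differences as functions of $(x,z)$ is characterized \emph{exactly} by the same $\tw$-ties that collapse halfspaces, so no separate case analysis is needed.
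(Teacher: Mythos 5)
Your proposal is correct and takes essentially the same approach as the paper: both exhibit a point with $z=1/2$ and $x$ concentrated on the coordinates $J(i)$ at which the $J$-inequality is strictly the tightest among all genuinely distinct halfspaces (with ties in $\tw$ shown to collapse halfspaces), then pick $y$ in the resulting gap. The paper simply evaluates at the exact vertex $x_{i,j}=\bbone[j=J(i)]$ and compares cases $J'(i)\ge J(i)$ versus $J'(i)<J(i)$ directly, so your $\epsilon$-perturbation and telescoping identity are a harmless repackaging of the same argument.
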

\begin{proof}
Fix a mapping $J$.  Consider the feasible points with $z=1/2$, and $x_{i,j}= \bbone[j = J(i)]$ for each $i$ and $j$.
At such points, the constraint corresponding to any mapping $J' \neq J$ in (\ref{eqn:simplicesD2upperBound}) is
\begin{align*}
y &\le\sum_{i=1}^{\tau}\left(\frac{\tw_{i,J'(i)}}{2}+\sum_{j=J'(i)+1}^{p_i}(\tw_{i,j}-\tw_{i,J'(i)})\mathbb{1}[j=J(i)]+\sum_{j=1}^{p_i}w^2_{i,j}\mathbb{1}[j=J(i)]\right)+\frac{b^1+b^2}{2} \\
&=\sum_{i=1}^{\tau}\left(\frac{\tw_{i,J'(i)}}{2}+(\tw_{i,J(i)}-\tw_{i,J'(i)})\mathbb{1}[J'(i)<J(i)]\right)+\sum_{i=1}^{\tau}w^2_{i,J(i)}+\frac{b^1+b^2}{2}.
\end{align*}
For any simplex $i$, recall that the indices are sorted so that $\tw_{i1}\le\cdots\le\tw_{ip_i}$.
Thus, if $J'(i)\ge J(i)$, then the expression inside the outer parentheses equals $\frac{\tw_{i,J'(i)}}{2} \geq \frac{\tw_{i,J(i)}}{2}$.
On the other hand, if $J'(i)<J(i)$, then the expression inside the outer parentheses can be re-written as $\frac{\tw_{i,J(i)}}{2}+\frac{\tw_{i,J(i)}-\tw_{i,J'(i)}}{2} \geq \frac{\tw_{i,J(i)}}{2}$.
Therefore, setting $J'(i)=J(i)$ for every simplex $i$ achieves the tightest upper bound in (\ref{eqn:simplicesD2upperBound}), which simplifies to
\begin{align} \label{eqn:facetTightestUpperBound}
y\le\sum_{i=1}^{\tau}\frac{\tw_{i,J(i)}}{2}+\sum_{i=1}^{\tau}w^2_{i,J(i)}+\frac{b^1+b^2}{2}.
\end{align}

Now, suppose that the same upper bound on $y$ is achieved by a mapping $J'$ such that $J'(i)\neq J(i)$ on a simplex $i$.
By the argument above, regardless of whether $J'(i)>J(i)$ or $J'(i)<J(i)$, the expression inside the outer parentheses can equal $\frac{\tw_{i,J(i)}}{2}$ only if $\tw_{i,J'(i)}=\tw_{i,J(i)}$.
In this case, inspecting the term inside the summation in \eqref{eqn:simplicesD2upperBound} for mappings $J$ and $J'$, we observe that regardless of the values of $x$ or $z$,
\begin{align*}
\tw_{i,J'(i)}z+\sum_{j=J'(i)+1}^{p_i}(\tw_{i,j}-\tw_{i,J'(i)})x_{i,j}=\tw_{i,J(i)}z+\sum_{j=J(i)+1}^{p_i}(\tw_{i,j}-\tw_{i,J(i)})x_{i,j}.
\end{align*}
Therefore, for a mapping $J'$ to achieve the tightest upper bound~(\ref{eqn:facetTightestUpperBound}), it must be the case that $\tw_{i,J'(i)}=\tw_{i,J(i)}$ on every simplex $i$, and so $J'$ and $J$ necessarily correspond to the same half-space in $D\times\mathbb{R}\times[0,1]$, completing the proof.
\qed\end{proof}


To close this section, we consider the setting where every simplex is 2-dimensional, i.e.\ $p_1=\cdots=p_{\tau}=2$, but the number of affine functions $d$ can be arbitrary.
Note that by contrast, the previous results (Proposition~\ref{prop:simplicesD2Explicit}, Corollary~\ref{cor:simplicesD2Result}, Proposition~\ref{prop:simplicesD2FacetDefining}) held in the setting where $d=2$ but $p_1,\ldots,p_{\tau}$ were arbitrary.
We exploit the symmetry of the transportation problem to immediately obtain the following analogous results, all wrapped up into Corollary~\ref{cor:simplicesP2}.

\begin{corollary} \label{cor:simplicesP2}
Given data $w^1, \ldots, w^d \in \bbR^2$, take $\tw^k = w^k_1 - w^k_2$ for all $k\in\llbracket d\rrbracket$, and suppose the indices have been sorted so that
$\tw^1\le\cdots\le\tw^d$.
Then
\begin{align} \label{eqn:p-2-trans-knapsack}
\trans(x,z;w^1,\ldots,w^d)=\min_{K=1}^d\left(\tw^Kx_1+\sum_{k=1}^{K}w^k_2z_k+\sum_{k=K+1}^{d}(w^k_1-\tw^K)z_k\right).
\end{align}
Moreover, a $K \in \llbracket d \rrbracket$ that attains the minimum in the right-hand side of \eqref{eqn:p-2-trans-knapsack} can be found in $\mathcal{O}(d)$ time.

Therefore, suppose that $D$ is a product of $\tau$ simplices of dimensions $p_1=\cdots=p_{\tau}=2$.
For each simplex $i\in\llbracket\tau\rrbracket$, take $\tw^k_i=w^k_{i,1}-w^k_{i,2}$ for all $k=1,\ldots,d$ and relabel the indices so that $\tw^1_i\le\cdots\le\tw^d_i$.
Then, in the context of \eqref{eqn:max_sharp_set_dual}, the upper-bound constraints \eqref{eqn:max_sharp_set_dual-1} are equivalent to
\begin{align}
y &\le\sum_{i=1}^{\tau}\left(\tw^{K(i)}_ix_{i,1}+\sum_{k=1}^{K(i)}w^k_{i,2}z_k+\sum_{k=K(i)+1}^{d}(w^k_{i,1}-\tw^{K(i)}_i)z_k\right)+\sum_{k=1}^db^kz_k \nonumber \\
&\hspace{18em}\forall\text{ mappings $K:\llbracket\tau\rrbracket\to\llbracket d\rrbracket$.} \label{eqn:p-2-trans-knapsack-full}
\end{align}
Furthermore, none of the constraints in~\eqref{eqn:p-2-trans-knapsack-full} are redundant.
\end{corollary}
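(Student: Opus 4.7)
The plan is to leverage the inherent transposition symmetry of the max-weight transportation problem in Definition~\ref{def:trans}. Under the relabeling of the bipartite flow variables $\beta^k_j \mapsto \beta^j_k$, together with the exchange $(x,z) \mapsto (z,x)$ and $w^k_j \mapsto w^j_k$, the problem is mapped to an instance of the same transportation problem with the sizes $p$ and $d$ swapped. Applying Proposition~\ref{prop:simplicesD2Explicit} to this transposed instance, which now has $d' = p = 2$ rows, yields the formula~\eqref{eqn:p-2-trans-knapsack} immediately after substituting $\tw^k = w^k_1 - w^k_2$ and reorganizing the constant terms to expose $\sum_{k=1}^{K} w^k_2 z_k$ as a separate block. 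The $\mathcal{O}(d)$ separation claim inherits directly from the $\mathcal{O}(p)$ bound in Proposition~\ref{prop:simplicesD2Explicit} under the same relabeling.

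For the explicit family of inequalities~\eqref{eqn:p-2-trans-knapsack-full}, I would substitute the closed form from~\eqref{eqn:p-2-trans-knapsack} into~\eqref{eqn:unsupress_i}, applied to each of the $\tau$ two-dimensional simplices. Because the inner minimum in~\eqref{eqn:p-2-trans-knapsack} is taken per simplex and the overall expression is a sum of such terms, the identity $\sum_{i=1}^{\tau} \min_{K(i)} (\cdots) = \min_{K : \llbracket \tau \rrbracket \to \llbracket d \rrbracket} \sum_{i=1}^{\tau} (\cdots)$ converts the single inequality~\eqref{eqn:unsupress_i} into the claimed finite family indexed by all mappings $K$.

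The final and most delicate step is the non-redundancy claim, which I would establish in direct parallel with the proof of Proposition~\ref{prop:simplicesD2FacetDefining}. For each mapping $K$, I would exhibit a specific feasible point $(x,y,z) \in D \times \mathbb{R} \times \Delta^d$ at which the inequality indexed by $K$ gives a strictly tighter upper bound on $y$ than the inequality indexed by any non-equivalent $K'$. The natural choice is to set $z_k = 1/d$ for all $k \in \llbracket d \rrbracket$ and, at each simplex $i$, to pick $x_{i,1}$ strictly inside the interval $\bigl((K(i)-1)/d,\, K(i)/d\bigr)$ with $x_{i,2} = 1 - x_{i,1}$. At such a point, the greedy fractional-knapsack solution to the transportation problem at each simplex $i$ attains its unique optimum precisely at the cut-off index $K(i)$ (using the sorting hypothesis $\tw^1_i \leq \cdots \leq \tw^d_i$), and any other mapping $K'(i) \neq K(i)$ produces a strictly sub-optimal transportation value on that simplex. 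Summing the per-simplex gaps and then perturbing $y$ upward by a small amount violates only the constraint indexed by $K$, certifying irredundancy.

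The main obstacle is precisely that $z$ is shared across all simplices while the mapping $K$ is allowed to vary with $i$, so one cannot straightforwardly mimic the Proposition~\ref{prop:simplicesD2FacetDefining} construction by putting $z$ at a vertex of $\Delta^d$. Taking $z$ uniform resolves this, because the cut-off at simplex $i$ is governed by how $x_{i,1}$ (which \emph{does} depend on $i$) compares against the $d$ equal cumulative $z$-levels $\{k/d\}_{k=0}^{d}$, so any per-simplex selection $K(i)$ can be realized simultaneously by a single $z$. Verifying that the resulting point is genuinely in the feasible region (i.e., also satisfies every lower-bound constraint $y \geq w^k \cdot x + b^k$ after choosing $y$ equal to the tight value of the $K$ constraint) is a routine check that follows from the validity part of Proposition~\ref{prop:max_sharp_set_primal}.
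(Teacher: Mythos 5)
Your proposal follows essentially the same route as the paper, which obtains this corollary by transposing the roles of the two sides of the transportation problem and invoking Proposition~\ref{prop:simplicesD2Explicit}, Corollary~\ref{cor:simplicesD2Result}, and Proposition~\ref{prop:simplicesD2FacetDefining}; your identification of the one genuinely non-symmetric wrinkle (the mapping $K$ varies per simplex while $z$ is shared) and its resolution via a uniform $z$ with per-simplex placement of $x_{i,1}$ is exactly the right adaptation of the Proposition~\ref{prop:simplicesD2FacetDefining} argument, since the increment identity $\phi_i(K'+1)-\phi_i(K') = (\tw^{K'+1}_i-\tw^{K'}_i)\bigl(x_{i,1}-\sum_{k>K'}z_k\bigr)$ shows any desired per-simplex minimizer can be forced by choosing $x_{i,1}$ in the appropriate open cell of $(0,1)$. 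Two small corrections: because the greedy solution fills the capacities $z_k$ starting from the \emph{largest} reduced weight $\tw^d_i$ downward, the breakpoint for $x_{i,1}\in\bigl((K-1)/d,\,K/d\bigr)$ is $d+1-K$, so your witness should instead take $x_{i,1}\in\bigl((d-K(i))/d,\,(d-K(i)+1)/d\bigr)$ (harmless for the conclusion, since the claim quantifies over all mappings, but your sentence that the optimum sits ``precisely at the cut-off index $K(i)$'' is false as written); and the final feasibility check against the lower bounds $y\ge w^k\cdot x+b^k$ is both unnecessary and not guaranteed---irredundancy here, as in Proposition~\ref{prop:simplicesD2FacetDefining}, is with respect to the family of upper-bound halfspaces, and indeed at a uniform $z$ the tight upper-bound value can lie strictly below $\max_k f^k(x)$, so the appeal to the validity part of Proposition~\ref{prop:max_sharp_set_primal} (which concerns integer $z$) would not rescue that step. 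One should also note, as in Proposition~\ref{prop:simplicesD2FacetDefining}, that ties $\tw^{K'}_i=\tw^{K(i)}_i$ yield identical halfspaces, so strictness is only needed against non-equivalent mappings.
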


Corollary~\ref{cor:simplicesP2} will be particularly useful in Section~\ref{ssec:max-cuts}, where it will allow us to derive sharp formulations for the maximum of $d$ affine functions over a box input domain, in analogy to \eqref{eqn:simplicesD2upperBound}.


\section{Applications of our machinery} \label{sec:applications}

We are now prepared to return to the concrete goal of this paper: building strong MIP formulations for nonlinearities used in modern neural networks.

\subsection{A hereditarily sharp formulation for \Max{} on box domains} \label{ssec:max-cuts}

We can now present a hereditarily sharp formulation, with exponentially-many constraints that can be efficiently separated, for the maximum of $d > 2$ affine functions over a shared box input domain. 
\begin{proposition}\label{prop:max_sharp}
    For each $k, \ell \in \llbracket d \rrbracket$, take
    \begin{align*}
        N^{\ell,k} &= \sum_{i=1}^\eta \max\{(w^k_i-w^\ell_i)L_i, (w^k_i-w^\ell_i)U_i)\}.
    \end{align*}
    A valid MIP formulation for $\gr(\emph{\Max{}} \circ (f^1,\ldots,f^d); [L,U])$ is
    \begin{subequations} \label{eqn:max_big_m}
    \begin{align}
    &y \leq w^\ell \cdot x + \sum_{k=1}^d \left(N^{\ell,k} + b^k\right)z_k \quad \forall \ell \in \llbracket d \rrbracket \label{eqn:max_big_m-1} \\
   &y \geq w^k \cdot x + b^k \quad\forall k \in \llbracket d \rrbracket \label{eqn:max_d_box-2} \\
    &(x,y,z) \in [L,U] \times \mathbb{R} \times \Delta^d \label{eqn:max_d_box-3}\\
    &z \in \{0,1\}^d.
    \end{align}
    \end{subequations}
    Moreover, a hereditarily sharp formulation is given by \eqref{eqn:max_big_m}, along with the constraints
\begin{align}
&y \leq \sum_{i=1}^{\eta}\left(w^{I(i)}_ix_i+\sum_{k=1}^d\max\{(w^{k}_i-w^{I(i)}_i)L_i,(w^{k}_i-w^{I(i)}_i)U_i\}z_k\right)+\sum_{k=1}^db^kz_k\nonumber\\ &\hspace{19em}\forall \text{ mappings } I : \llbracket \eta \rrbracket \to \llbracket d \rrbracket \label{eqn:max_d_box-1}
\end{align}
Furthermore, none of the constraints~\eqref{eqn:max_d_box-2} and~\eqref{eqn:max_d_box-1} are redundant.
\end{proposition}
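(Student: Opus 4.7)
The plan is to read off the constraints \eqref{eqn:max_d_box-1} directly from the dual characterization of $\Rsharp$ in Proposition~\ref{prop:max_sharp_set_dual}, obtain \eqref{eqn:max_big_m-1} as the special case in which the mapping $I$ is constant, and then transfer non-redundancy from Corollary~\ref{cor:simplicesP2} by writing $[L,U]$ as a product of $\eta$ two-dimensional simplices.

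For the derivation, specialize \eqref{eqn:max_sharp_set_dual-1} to $D=[L,U]$. The inner maximum separates coordinatewise as $\max_{x^k\in D}(w^k-\alpha)\cdot x^k=\sum_{i=1}^\eta\max\{(w^k_i-\alpha_i)L_i,(w^k_i-\alpha_i)U_i\}$, so the entire right-hand side of \eqref{eqn:max_sharp_set_dual-1} decouples across $i$. For each coordinate $i$ I would minimize in $\alpha_i$ a convex piecewise-linear function whose slopes are $x_i-L_i$ as $\alpha_i\to+\infty$ and $x_i-U_i$ as $\alpha_i\to-\infty$, with kinks located precisely at $\alpha_i\in\{w^k_i:k\in\llbracket d\rrbracket\}$; since $x_i\in[L_i,U_i]$ the function is bounded below and attains its minimum at one of these kinks, so an optimizer has the form $\alpha_i=w^{I(i)}_i$ for some $I(i)\in\llbracket d \rrbracket$. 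Substituting this back (and observing that the $k=I(i)$ summand contributes zero) yields exactly \eqref{eqn:max_d_box-1}. By Proposition~\ref{prop:max_sharp_set_primal}, the resulting system is then hereditarily sharp. Taking $I\equiv\ell$ in \eqref{eqn:max_d_box-1} and collecting terms specializes it to \eqref{eqn:max_big_m-1}, so \eqref{eqn:max_big_m} is implied by the sharp system and hence valid; alternatively, validity can be checked by hand by fixing $z={\bf e}^\ell$ and using $N^{\ell,\ell}=0$ to see that \eqref{eqn:max_big_m-1} at $\ell$ combined with \eqref{eqn:max_d_box-2} at $\ell$ forces $y=w^\ell\cdot x+b^\ell$.

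Non-redundancy of \eqref{eqn:max_d_box-2} follows immediately from Assumption~\ref{ass:amphibious}: for each $k$, choose $\hat x\in D$ with $f^k(\hat x)>f^\ell(\hat x)$ for all $\ell\neq k$; then $(\hat x,f^k(\hat x),{\bf e}^k)$ is feasible and only the $k$-th lower bound is tight. For non-redundancy of \eqref{eqn:max_d_box-1}, the cleanest path is to reparameterize $[L,U]$ as the product of $\eta$ two-dimensional simplices via $x_i=L_i\lambda_{i,1}+U_i\lambda_{i,2}$, which turns the weights into $w^k_{i,1}=w^k_iL_i$ and $w^k_{i,2}=w^k_iU_i$, and turns $\tw^k_i=w^k_{i,1}-w^k_{i,2}=-w^k_i(U_i-L_i)$. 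Under this affine bijection between $(x,y,z)$-space and $(\lambda,y,z)$-space, the family \eqref{eqn:max_d_box-1} indexed by mappings $I:\llbracket\eta\rrbracket\to\llbracket d\rrbracket$ matches the family \eqref{eqn:p-2-trans-knapsack-full} indexed by $K:\llbracket\tau\rrbracket\to\llbracket d\rrbracket$ (after the sign flip $w^k_i\ge w^{I(i)}_i\Leftrightarrow \tw^k_i\le \tw^{K(i)}_i$), so the non-redundancy assertion of Corollary~\ref{cor:simplicesP2} transfers directly. The main obstacle I expect is precisely this last bookkeeping step: aligning the arbitrary mapping $I$ used in \eqref{eqn:max_d_box-1} with the sorted-index convention in \eqref{eqn:p-2-trans-knapsack-full}, and handling the harmless ties that occur when several $w^k_i$ coincide for a given $i$, so that ``distinct mapping $\Leftrightarrow$ distinct half-space'' genuinely holds.
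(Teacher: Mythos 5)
Your proposal is correct, but the core derivation takes a genuinely different route from the paper. The paper obtains everything — the family \eqref{eqn:max_d_box-1}, the lower bounds, sharpness, and irredundancy — by an explicit change of variables $\xi_i \leftarrow (U_i-L_i)x_{i,1}+L_i$ that maps the box onto a product of two-dimensional simplices and then transfers Corollary~\ref{cor:simplicesP2} wholesale, at the cost of the permutation bookkeeping ($\sigma_i$, sorted indices) that you flag as the delicate step. You instead specialize the dual characterization of $\Rsharp$ (Proposition~\ref{prop:max_sharp_set_dual}) directly to the box: the inner maximum decouples coordinatewise, and the coordinatewise minimization over $\alpha_i$ of a convex piecewise-linear function with kinks at $\{w^k_i\}_k$ and asymptotic slopes $x_i-L_i\ge 0$ and $x_i-U_i\le 0$ shows the infinite family \eqref{eqn:max_sharp_set_dual-1} is equivalent (given the domain constraints) to the finite family \eqref{eqn:max_d_box-1}; hereditary sharpness then comes straight from Proposition~\ref{prop:max_sharp_set_primal}, and \eqref{eqn:max_big_m-1} falls out as the constant-mapping case, with the hand check at $z=\mathbf{e}^\ell$ closing validity of the big-$M$ system on its own. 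This self-contained argument avoids the sorted-index translation entirely for the derivation and makes transparent why the dual multipliers may be restricted to $\alpha_i\in\{w^1_i,\ldots,w^d_i\}$; what it does not avoid is exactly what the paper also leans on, namely Corollary~\ref{cor:simplicesP2} (via Proposition~\ref{prop:simplicesD2FacetDefining}) for irredundancy of \eqref{eqn:max_d_box-1}, where your tie-handling concern is already resolved in the paper's statement that equal-weight mappings define the same half-space. Your irredundancy argument for \eqref{eqn:max_d_box-2} via Assumption~\ref{ass:amphibious} coincides with the paper's.
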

\begin{proof}
The result directly follows from Corollary~\ref{cor:simplicesP2} if we make a change of variables to transform the box domain $[L,U]$ to the domain $(\Delta^2)^{\eta}$, where the $x$-coordinates are given by $x_{i,1},x_{i,2}\ge0$ over $i\in\llbracket\eta\rrbracket$, with $x_{i,1}+x_{i,2}=1$ for each simplex $i$.
For each $i$, let $w^k_{i,1}=w^k_iU_i,w^k_{i,2}=w^k_iL_i$, and let $\sigma_i:\llbracket d\rrbracket\to\llbracket d\rrbracket$ be a permutation such that
$w^{\sigma_i(1)}_{i,1}-w^{\sigma_i(1)}_{i,2}\le\cdots\le w^{\sigma_i(d)}_{i,1}-w^{\sigma_i(d)}_{i,2}$.
Fix a mapping $I:\llbracket\eta\rrbracket\to\llbracket d\rrbracket$ for~\eqref{eqn:p-2-trans-knapsack-full}. We make the change of variables $\xi_i \leftarrow (U_i-L_i)x_{i,1}+L_i$ and rewrite~\eqref{eqn:p-2-trans-knapsack-full} from Corollary~\ref{cor:simplicesP2} to take the form~\eqref{eqn:max_d_box-1}, as follows:
\begin{align}
y&\leq \sum_{i=1}^{\eta}\Bigg(w^{\sigma_i(I(i))}_i(U_i-L_i)x_{i,1}+\sum_{k=1}^{I(i)}w^{\sigma_i(k)}_iL_iz_k+\sum_{k=I(i)+1}^{d}(w^{\sigma_i(k)}_iU_i-w^{\sigma_i(I(i))}_i(U_i-L_i))z_k\Bigg)+\sum_{k=1}^db^kz_k,\nonumber\\
&=\sum_{i=1}^{\eta}\left(w^{\sigma_i(I(i))}_i\xi_i+\sum_{k=1}^{I(i)}(w^{\sigma_i(k)}_i-w^{\sigma_i(I(i))}_i)L_iz_k+\sum_{k=I(i)+1}^{d}(w^{\sigma_i(k)}_i-w^{\sigma_i(I(i))}_i)U_iz_k\right)+\sum_{k=1}^db^kz_k \nonumber \\
&=\sum_{i=1}^{\eta}\left(w^{\sigma_i(I(i))}_i\xi_i+\sum_{k=1}^d\max\{(w^{\sigma_i(k)}_i-w^{\sigma_i(I(i))}_i)L_i,(w^{\sigma_i(k)}_i-w^{\sigma_i(I(i))}_i)U_i\}z_k\right)+\sum_{k=1}^db^kz_k \nonumber \\
&=\sum_{i=1}^{\eta}\left(w^{\sigma_i(I(i))}_i\xi_i+\sum_{k=1}^d\max\{(w^{k}_i-w^{\sigma_i(I(i))}_i)L_i,(w^{k}_i-w^{\sigma_i(I(i))}_i)U_i\}z_k\right)+\sum_{k=1}^db^kz_k, \label{eqn:removeSigma}
\end{align}
where the first equality holds because $\xi_i-(U_i-L_i)x_{i,1}=L_i$ for each $i$ and $\sum_{k=1}^d z_k=1$, the second equality holds because $(w^{\sigma_i(k)}_i-w^{\sigma_i(I(i))}_i)L_i\ge(w^{\sigma_i(k)}_i-w^{\sigma_i(I(i))}_i)U_i$ if $k\le I(i)$ (and a reverse argument can be made if $k>I(i)$), and the third equality holds as each $\sigma_i:\llbracket d\rrbracket\to\llbracket d\rrbracket$ is a bijection. Now, since~(\ref{eqn:removeSigma}) is taken over all mappings $I:\llbracket\eta\rrbracket\to\llbracket d\rrbracket$, it is equivalent to replace $\sigma_i(I(i))$ with $I(i)$ in (\ref{eqn:removeSigma}). This yields~\eqref{eqn:max_d_box-1} over $\xi$ instead of $x$.

The lower bound in the context of Corollary~\ref{cor:simplicesP2} can be expressed as $y \geq \sum_{i=1}^{\eta}(w^k_{i,1}x_{i,1}+w^k_{i,2}x_{i,2}) + b^k,\ \forall k \in \llbracket d \rrbracket$. To transform it to~\eqref{eqn:max_d_box-2} over $\xi$, observe that $w^k_{i,1}x_{i,1}+w^k_{i,2}x_{i,2}$ can be re-written as $w^k_i(U_ix_{i,1}+L_i(1-x_{i,1}))=w^k_i\xi_i$.

Finally, note that this transformation of variables is a bijection since each $x_{i,1}$ was allowed to range over $[0,1]$, and thus each $\xi_i$ is allowed to range over $[L_i,U_i]$. Hence the new formulation over $(\xi,y,z)\in[L,U]\times\mathbb{R}\times\Delta^d$ is also sharp, yielding the desired result.

The irredundancy of~\eqref{eqn:max_d_box-1} also follows from Corollary~\ref{cor:simplicesP2}. For the irredundancy of~\eqref{eqn:max_d_box-2}, fix $k$ and take a point $(\hat{x}, \hat{y}, \hat{z})$ where $f^k(\hat{x}) > f^\ell(\hat{x})$ for all $\ell \neq k$, which exists by Assumption~\ref{ass:amphibious}. Thus, since the inequalities~\eqref{eqn:max_d_box-2} are the only ones bounding $y$ from below, the point $(\hat{x}, \hat{y} - \epsilon, \hat{z})$ for some $\epsilon > 0$ is satisfied by all constraints except~\eqref{eqn:max_d_box-2} corresponding to $k$, and therefore it is not redundant.
\qed\end{proof}

Observe that the constraints \eqref{eqn:max_big_m-1} are a special case of \eqref{eqn:max_d_box-1} for those constant mappings $I(i) = \ell$ for each $i \in \llbracket \eta \rrbracket$. We note in passing that this big-$M$ formulation~\eqref{eqn:max_big_m} may be substantially stronger than existing formulations appearing in the literature. For example, the tightest version of the formulation of Tjeng et al.~\cite{Tjeng:2017} is equivalent to \eqref{eqn:max_big_m} with the coefficients $N^{\ell,k}$ replaced with
\[
    N^{\ell,k} = b^\ell - b^k + \max\limits_{t \neq \ell} \left( \sum_{i=1}^\eta \left(\max\{w^t_iL_i,w^t_iU_i\} - \min\{w^\ell_iL_i,w^\ell_iU_i\}\right) \right).
\]
Note in particular that as the inner maximization and minimization are completely decoupled, and that the outer maximization in the definition of $N^{\ell,k,+}$ is completely independent of $k$.

In Appendix~\ref{app:max_big_m}, we generalize~\eqref{eqn:max_big_m} to provide a valid formulation for arbitrary polytope domains. We next emphasize that the hereditarily sharp formulation from Proposition~\ref{prop:max_sharp} is particularly strong when $d = 2$.

\begin{corollary}\label{cor:max_2_ideal}
The formulation given by \eqref{eqn:max_big_m} and \eqref{eqn:max_d_box-1} is ideal when $d = 2$. Moreover, the constraints in the families~\eqref{eqn:max_d_box-1} and~\eqref{eqn:max_d_box-2} are facet-defining.
\end{corollary}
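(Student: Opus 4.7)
The plan is to combine three ingredients: Corollary~\ref{cor:max_2_ideal_general} (idealness of the sharp formulation when $d=2$), the irredundancy already established in Proposition~\ref{prop:max_sharp}, and the standard fact that an irredundant inequality in a linear description of a full-dimensional polyhedron is facet-defining.

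\textbf{Idealness.} The proof of Proposition~\ref{prop:max_sharp} realizes the box formulation \eqref{eqn:max_big_m}--\eqref{eqn:max_d_box-1} as the image of the hereditarily sharp simplex system $\{(x,y,z) \in \Rsharp : z \in \{0,1\}^d\}$ on $D = (\Delta^2)^\eta$ under the coordinatewise bijective affine change of variables $\xi_i = (U_i - L_i)x_{i,1} + L_i$. For $d=2$, Corollary~\ref{cor:max_2_ideal_general} guarantees that this simplex system is ideal. Since the change of variables is affine and bijective, leaves the integer variables $z$ untouched, and therefore maps extreme points to extreme points, idealness transfers to the box formulation.

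\textbf{Full-dimensionality of the LP relaxation.} Let $Q$ denote the LP relaxation. Since the formulation is ideal and $Q$ is bounded, $Q = \Conv(\ubar{Q})$, so it suffices to show $\dim(\ubar{Q}) = \eta + 2$. Trivially $Q \subseteq \{z_1 + z_2 = 1\}$, giving $\dim(Q) \leq \eta + 2$. For the reverse bound, any affine equation $\alpha \cdot x + \beta y + \gamma_1 z_1 + \gamma_2 z_2 = \delta$ holding on $\ubar{Q}$ restricts, on each integer piece $\{(x, w^k\cdot x + b^k, \mathbf{e}^k) : x \in D_{|k}\}$, to an affine equation in $x$; since Assumption~\ref{ass:amphibious} makes each $D_{|k}$ full-dimensional in $\bbR^\eta$, this forces $\alpha = -\beta w^k$ for both $k = 1, 2$. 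Irreducibility with $d=2$ also forces $w^1 \neq w^2$ (otherwise $f^1 - f^2$ would be a nonzero constant of a fixed sign, contradicting the existence of inputs on which either function strictly dominates), so $\beta = 0$ and the equation collapses to a scalar multiple of $z_1 + z_2 = 1$. Hence $\dim(Q) = \eta + 2$.

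\textbf{Facets.} Proposition~\ref{prop:max_sharp} already asserts that every constraint in \eqref{eqn:max_d_box-1} and \eqref{eqn:max_d_box-2} is irredundant with respect to $Q$. Combined with the full-dimensionality just established, each such irredundant inequality defines a facet of $Q = \Conv(\ubar{Q})$ and hence satisfies the dimension condition in Definition~\ref{def:ideal}. The main obstacle I anticipate is the dimension bookkeeping in the second step, since everything hinges on Assumption~\ref{ass:amphibious} producing $w^1 \neq w^2$ and on no hidden affine relation surviving beyond $z_1 + z_2 = 1$; once that is confirmed, the proof reduces to assembling statements already in hand.
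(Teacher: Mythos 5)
Your proposal is correct and follows essentially the same route as the paper: idealness is inherited from Corollary~\ref{cor:max_2_ideal_general} (via the formulation's identification with $\Rsharp$ established in Proposition~\ref{prop:max_sharp}), and facet-definingness follows from the irredundancy of \eqref{eqn:max_d_box-1} and \eqref{eqn:max_d_box-2} together with the dimension count $\eta+d$ forced by Assumption~\ref{ass:amphibious}, with $\sum_k z_k = 1$ as the only affine relation. The only difference is that you spell out the details (the affine change of variables preserving extreme points, and the explicit argument that $w^1 \neq w^2$ kills any further equality) that the paper merely asserts.
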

\begin{proof}
Idealness follows directly from Corollary~\ref{cor:max_2_ideal_general}. Since the constraints~\eqref{eqn:max_d_box-1} and~\eqref{eqn:max_d_box-2} are irredundant and the formulation is ideal, they must either be facet-defining or describe an implied equality. Given that the equality $\sum_{k=1}^d z_k = 1$ appears in~\eqref{eqn:max_d_box-3}, it suffices to observe that the polyhedron defined by~\eqref{eqn:max_big_m} and~\eqref{eqn:max_d_box-1} has dimension $\eta + d$, which holds under Assumption~\ref{ass:amphibious}. 
\qed\end{proof}

We can compute a most-violated inequality from the family~\eqref{eqn:max_d_box-1} efficiently.

\begin{proposition} \label{prop:max_sharp_sep}
Consider the family of inequalities~\eqref{eqn:max_d_box-1}. Take some point $(\hat{x},\hat{y},\hat{z}) \in [L,U] \times \bbR \times \Delta^d$. If any constraint in the family is violated at the given point, a most-violated constraint can be constructed by selecting $\hat{I} : \llbracket \eta \rrbracket \to \llbracket d \rrbracket$ such that
\begin{equation}
        \hat{I}(i) \in \arg\min_{\ell\in\llbracket d\rrbracket} \left( w^{\ell}_i\hat{x}_i+\sum_{k=1}^d\max\{(w^{k}_i-w^{\ell}_i)L_i,(w^{k}_i-w^{\ell}_i)U_i\}\hat{z}_k \right) \label{eqn:max_box_d_separate}
\end{equation}
for each $i \in \llbracket \eta \rrbracket$. Moreover, if the weights $w_i^k$ are sorted on $k$ for each $i \in \llbracket \eta \rrbracket$, this can be done in $\mathcal{O}(\eta d)$ time.
\end{proposition}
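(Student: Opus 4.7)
The plan is to exploit the fact that, after fixing the point $(\hat{x},\hat{y},\hat{z})$, the right-hand side of each inequality in \eqref{eqn:max_d_box-1} is an additively separable function of the mapping $I$. Concretely, the right-hand side evaluated at $(\hat{x},\hat{z})$ can be written as $\sum_{i=1}^{\eta}\phi_i(I(i))+\sum_{k=1}^d b^k\hat{z}_k$, where
\[
\phi_i(\ell)\;\defeq\;w^{\ell}_i\hat{x}_i+\sum_{k=1}^d\max\{(w^k_i-w^{\ell}_i)L_i,(w^k_i-w^{\ell}_i)U_i\}\hat{z}_k.
\]
Since $\phi_i$ depends only on $I(i)$ and the offset $\sum_k b^k\hat{z}_k$ is independent of $I$, the mapping $\hat{I}$ minimizing the RHS over all $I:\llbracket\eta\rrbracket\to\llbracket d\rrbracket$ can be found coordinate-by-coordinate by setting $\hat{I}(i)\in\arg\min_{\ell}\phi_i(\ell)$. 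A most-violated inequality at $(\hat{x},\hat{y},\hat{z})$ necessarily corresponds to the smallest RHS, so this construction returns a most-violated constraint whenever any member of \eqref{eqn:max_d_box-1} is violated, and otherwise certifies that the entire family is satisfied.

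For the complexity bound, I would fix $i$ and assume the weights have been sorted so that $w^1_i\le\cdots\le w^d_i$. Because $L_i\le U_i$, the quantity $\max\{(w^k_i-w^{\ell}_i)L_i,(w^k_i-w^{\ell}_i)U_i\}$ equals $(w^k_i-w^{\ell}_i)U_i$ when $k\ge\ell$ and $(w^k_i-w^{\ell}_i)L_i$ when $k<\ell$. Substituting gives
\[
\phi_i(\ell)=w^{\ell}_i\hat{x}_i+L_i\sum_{k<\ell}(w^k_i-w^{\ell}_i)\hat{z}_k+U_i\sum_{k\ge\ell}(w^k_i-w^{\ell}_i)\hat{z}_k.
\]
Precomputing the running prefix sums $\sum_{k<\ell}\hat{z}_k$ and $\sum_{k<\ell}w^k_i\hat{z}_k$ (and their suffix counterparts) in a single left-to-right scan of $k=1,\dots,d$ takes $\mathcal{O}(d)$ time; once these are available, $\phi_i(\ell)$ can be evaluated in $\mathcal{O}(1)$, so the minimization over $\ell$ costs $\mathcal{O}(d)$ per coordinate. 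Iterating over $i\in\llbracket\eta\rrbracket$ yields the claimed $\mathcal{O}(\eta d)$ bound.

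The separability argument is essentially immediate from the form of \eqref{eqn:max_d_box-1}, so the only delicate point is the complexity claim: a naive evaluation of $\phi_i(\ell)$ costs $\mathcal{O}(d)$, which would give $\mathcal{O}(\eta d^2)$ overall, and I expect the main obstacle to be making the prefix-sum reduction clean once the case split between $k<\ell$ and $k\ge\ell$ is made explicit using the sorted order of the weights. I would also briefly note that since only one inequality needs to be returned, ties in the $\arg\min$ can be broken arbitrarily, and that the correctness of the procedure does not require $(\hat{x},\hat{y},\hat{z})$ to satisfy any constraint a priori, beyond lying in $[L,U]\times\mathbb{R}\times\Delta^d$.
\qed
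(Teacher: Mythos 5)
Your proposal is correct. The separability observation (the right-hand side of \eqref{eqn:max_d_box-1} decomposes as $\sum_i \phi_i(I(i)) + \sum_k b^k\hat z_k$, so the minimizing mapping is found coordinatewise, and minimizing the RHS is equivalent to maximizing the violation since the left-hand side is common to all members of the family) is exactly the content of the proposition, and your case split $\max\{(w^k_i-w^\ell_i)L_i,(w^k_i-w^\ell_i)U_i\} = (w^k_i-w^\ell_i)U_i$ for $k\ge\ell$ and $(w^k_i-w^\ell_i)L_i$ for $k<\ell$ under the per-coordinate sorted order is valid because $L_i\le U_i$. Where you diverge from the paper is in how the per-coordinate $\mathcal{O}(d)$ bound is obtained: the paper simply invokes Corollary~\ref{cor:simplicesP2}, whose proof (via Proposition~\ref{prop:simplicesD2Explicit} and the symmetry of the transportation problem) shows the minimizer over $\ell$ is the breakpoint of a greedy fractional-knapsack solution, found by a single running-sum scan over the $\hat z_k$; you instead evaluate all $d$ candidates $\phi_i(\ell)$ in $\mathcal{O}(1)$ each using prefix/suffix sums of $\hat z_k$ and $w^k_i\hat z_k$. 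Both give $\mathcal{O}(\eta d)$ after sorting. Your route is more elementary and self-contained, requiring none of the transportation-problem machinery; the paper's route buys the structural insight that the optimal index is a knapsack breakpoint, which is what lets the authors remark that the sorting step itself can be avoided by solving the knapsack as a weighted-median problem in $\mathcal{O}(d)$ time, a refinement your prefix-sum scheme (which presupposes the sorted order to resolve the max) does not immediately yield.
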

\begin{proof}
Follows directly from Corollary~\ref{cor:simplicesP2}, which says that the minimization problem~\eqref{eqn:max_box_d_separate} can be solved in $\mathcal{O}(d)$ time for any $i\in\llbracket\eta\rrbracket$.
\qed\end{proof}
Note that na\"{i}vely, the minimization problem~\eqref{eqn:max_box_d_separate} would take $\mathcal{O}(d^2)$ time, because one has to check every $\ell\in\llbracket d\rrbracket$, and then sum over $k\in\llbracket d\rrbracket$ for every $\ell$. However, if we instead pre-sort the weights $w^1_i,\ldots,w^d_i$ for every $i\in\llbracket\eta\rrbracket$ in $\mathcal{O}(\eta d \log d)$ time, we can use Corollary~\ref{cor:simplicesP2} to run efficiently separate via a linear search. We note, however, that this pre-sorting step can potentially be obviated by solving the fractional knapsack problems appearing as a weighted median problem, which can be solved in $\mathcal{O}(d)$ time.

\subsection{The ReLU over a box domain} \label{ssec:relu-box}

We can now present the results promised in Theorem~\ref{thm:informal}. In particular, we derive a non-extended ideal formulation for the ReLU nonlinearity, stated only in terms of the original variables $(x,y)$ and the single additional binary variable $z$. Put another way, it is the strongest possible tightening that can be applied to the big-$M$ formulation \eqref{eqn:relu-big-M}, and so matches the strength of the multiple choice formulation without the growth in the number of variables remarked upon in Section~\ref{ssec:ideal-extended}. Notationally, for each $i \in \llbracket \eta \rrbracket$ take
\[
    \breve{L}_i = \begin{cases} L_i & \text{ if } w_i \geq 0 \\ U_i & \text{ if } w_i < 0 \end{cases} \quad \text{ and } \quad \breve{U}_i = \begin{cases} U_i & \text{ if } w_i \geq 0 \\ L_i & \text{ if } w_i < 0 \end{cases}.
\]


\begin{proposition} \label{prop:ideal-relu}
    Take some affine function $f(x) = w \cdot x + b$ over input domain $D = [L,U]$. The following is an ideal MIP formulation for $\gr(\emph{\ReLu{}} \circ f; [L,U])$:
    \begin{subequations} \label{eqn:ideal-single-relu}
    \begin{align}
        y &\leq \sum_{i \in I} w_i(x_i - \breve{L}_i(1-z)) + \left(b + \sum_{i \not\in I} w_i\breve{U}_i\right)z \quad \forall I \subseteq \llbracket\eta\rrbracket \label{eqn:ideal-single-relu-2} \\
        y &\geq w \cdot x + b \label{eqn:ideal-single-relu-1} \\
        (x,y,z) &\in [L,U] \times \bbR_{\geq 0} \times [0,1] \label{eqn:ideal-single-relu-3} \\
        z &\in \{0,1\}. \label{eqn:ideal-single-relu-4}
    \end{align}
    \end{subequations}
    Furthermore, each inequality in \eqref{eqn:ideal-single-relu-2} and \eqref{eqn:ideal-single-relu-1} is facet-defining.
\end{proposition}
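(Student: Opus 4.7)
The plan is to obtain the formulation as a direct specialization of Proposition~\ref{prop:max_sharp} (and its $d=2$ corollary, Corollary~\ref{cor:max_2_ideal}) to the ReLU. I encode $\ReLu \circ f$ as $\Max \circ (f^1, f^2)$ with $f^1(x) = w \cdot x + b$ and $f^2(x) \equiv 0$ (so $w^2 = \mathbf{0}^\eta$ and $b^2 = 0$), giving $\Smax = \gr(\ReLu \circ f; [L,U])$. Corollary~\ref{cor:max_2_ideal} then immediately produces an ideal formulation whose inequalities in the families \eqref{eqn:max_d_box-2} and \eqref{eqn:max_d_box-1} are all facet-defining; what remains is an algebraic translation between that mapping-based description and the subset-based description in \eqref{eqn:ideal-single-relu-2}--\eqref{eqn:ideal-single-relu-1}.

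For the lower bounds, \eqref{eqn:max_d_box-2} with $d=2$ yields $y \geq w \cdot x + b$ (the inequality \eqref{eqn:ideal-single-relu-1}) and $y \geq 0$, the latter being subsumed by $y \in \bbR_{\geq 0}$ in \eqref{eqn:ideal-single-relu-3}. For the upper bounds, given a mapping $\bar{I} : \llbracket\eta\rrbracket \to \llbracket 2 \rrbracket$, I reindex by the subset $I = \Set{ i \in \llbracket\eta\rrbracket | \bar{I}(i) = 1}$ and identify $z = z_1$, $1-z = z_2$. For $i \in I$ the coefficient of $x_i$ is $w^1_i = w_i$, the coefficient of $z_1$ is $\max\{0,0\} = 0$, and the coefficient of $z_2$ is $\max\{-w_iL_i, -w_iU_i\} = -w_i \breve{L}_i$. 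For $i \notin I$ the coefficient of $x_i$ is $0$, the coefficient of $z_1$ is $\max\{w_iL_i, w_iU_i\} = w_i \breve{U}_i$, and the coefficient of $z_2$ is $0$. Adding the $b^1 z_1 = bz$ offset and collecting terms reproduces \eqref{eqn:ideal-single-relu-2} exactly, with the correspondence $\bar{I} \leftrightarrow I$ a bijection between mappings $\llbracket\eta\rrbracket \to \llbracket 2 \rrbracket$ and subsets of $\llbracket\eta\rrbracket$.

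Idealness and the facet-defining property then transfer intact from Corollary~\ref{cor:max_2_ideal}: idealness because the two descriptions define the same polyhedron in $(x,y,z)$-space, and the facet property because the $\bar{I} \leftrightarrow I$ correspondence is a bijection (so no two distinct subsets collapse to the same halfspace).

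The main obstacle is the sign-dependent simplification $\max\{w_iL_i, w_iU_i\} = w_i\breve{U}_i$ and $\max\{-w_iL_i, -w_iU_i\} = -w_i\breve{L}_i$, which requires checking both signs of $w_i$ against the definitions of $\breve{L}_i$ and $\breve{U}_i$; the $\breve{\cdot}$ notation is precisely engineered so this casework uniformizes. A secondary item is verifying that Assumption~\ref{ass:amphibious} here is the familiar condition $M^-(f; [L,U]) < 0 < M^+(f; [L,U])$, under which Corollary~\ref{cor:max_2_ideal} guarantees that \eqref{eqn:ideal-single-relu-1} is facet-defining rather than an implied equality.
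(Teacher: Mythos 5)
Your proposal is correct and follows essentially the same route as the paper: the paper also obtains Proposition~\ref{prop:ideal-relu} as a special case of Corollary~\ref{cor:max_2_ideal} with $w^1=w$, $w^2=\mathbf{0}$, $b^1=b$, $b^2=0$, $z_1=z$, $z_2=1-z$, translating mappings $\hat{I}$ into subsets $I=\{i:\hat{I}(i)=1\}$. Your explicit verification of the sign-dependent identities $\max\{w_iL_i,w_iU_i\}=w_i\breve{U}_i$ and $\max\{-w_iL_i,-w_iU_i\}=-w_i\breve{L}_i$ simply spells out the algebra the paper leaves implicit.
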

\begin{proof}\footnote{Alternatively, a constructive proof of validity and idealness using Fourier--Motzkin elimination is given in the extended abstract of this work~\cite[Proposition 1]{Anderson:2019}.}
This result is a special case of Corollary~\ref{cor:max_2_ideal}. Observe that~\eqref{eqn:ideal-single-relu} is equivalent to~\eqref{eqn:max_big_m} and~\eqref{eqn:max_d_box-1} with $w^1_i=w_i$ and $w^2_i=0$ for all $i \in \llbracket \eta \rrbracket$, $b^1=b$, $b^2=0$, $z_1=z$, and $z_2=1-z$. The constraints~\eqref{eqn:ideal-single-relu-2} are found by setting $I=\Set{i \in \llbracket\eta\rrbracket | \hat{I}(i)=1}$ for each mapping $\hat{I}$ in~\eqref{eqn:max_d_box-1}.
\qed \end{proof}

Formulation \eqref{eqn:ideal-single-relu} has a number of constraints exponential in the input dimension $\eta$, so it will not be useful directly as a MIP formulation. However, it is straightforward to separate the exponential family \eqref{eqn:ideal-single-relu-2} efficiently.

\begin{proposition} \label{prop:relu-separation}
    Take $(\hat{x},\hat{y},\hat{z}) \in [L,U] \times \bbR_{\geq 0} \times [0,1]$, along with the set
    \[
        \hat{I} = \Set{ i \in \llbracket\eta\rrbracket | w_i\hat{x}_i < w_i\left(\breve{L}(1-\hat{z}) + \breve{U}_i\hat{z}\right) }.
    \]
    If
    \[
        \hat{y} > b\hat{z} + \sum_{i \in \hat{I}} w_i\left(\hat{x}_i - \breve{L}(1-\hat{z})\right) + \sum_{i \not\in \hat{I}} w_i\breve{U}_i\hat{z},
    \]
    then the constraint in \eqref{eqn:ideal-single-relu-2} corresponding to $\hat{I}$ is the most violated in the family. Otherwise, no inequality in the family is violated at $(\hat{x},\hat{y},\hat{z})$.
\end{proposition}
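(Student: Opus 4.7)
The plan is to exploit the fact that the right-hand side of \eqref{eqn:ideal-single-relu-2}, evaluated at the fixed point $(\hat{x},\hat{y},\hat{z})$, decomposes as a sum of coordinate-wise contributions that each depend only on whether $i\in I$ or $i\notin I$, independently across $i \in \llbracket \eta \rrbracket$. Minimizing the RHS over all $I \subseteq \llbracket \eta \rrbracket$ therefore reduces to making one independent binary decision per coordinate, which is done by inspection.

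First, I would expand the RHS at $(\hat{x},\hat{y},\hat{z})$ and collect terms, writing it as
\[
    b\hat{z} + \sum_{i=1}^{\eta} c_i(I),\qquad c_i(I) = \begin{cases} w_i(\hat{x}_i - \breve{L}_i(1-\hat{z})) & i \in I \\ w_i \breve{U}_i \hat{z} & i \notin I. \end{cases}
\]
The constant $b\hat{z}$ plays no role in the optimization over $I$, and the remaining sum separates completely across coordinates.

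Second, for each $i$, the minimizing choice is $i \in I$ exactly when $w_i(\hat{x}_i - \breve{L}_i(1-\hat{z})) < w_i \breve{U}_i \hat{z}$; rearranging, this is equivalent to $w_i \hat{x}_i < w_i\bigl(\breve{L}_i(1-\hat{z}) + \breve{U}_i \hat{z}\bigr)$, which is precisely the membership condition defining $\hat{I}$. (In the case of a tie, including or excluding $i$ yields the same RHS value, so the strict inequality in the definition is a harmless tiebreaker.) Consequently $\hat{I}$ minimizes the RHS over all $I \subseteq \llbracket \eta \rrbracket$, and its corresponding minimum value is exactly the expression $b\hat{z} + \sum_{i \in \hat{I}} w_i(\hat{x}_i - \breve{L}_i(1-\hat{z})) + \sum_{i \notin \hat{I}} w_i\breve{U}_i \hat{z}$ appearing in the proposition's hypothesis.

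Third, since the most violated inequality in the family is the one with the smallest RHS, the inequality indexed by $\hat{I}$ is the most violated; it is violated if and only if $\hat{y}$ strictly exceeds this minimum, which is exactly the condition stated. If $\hat{y}$ does not exceed this value, then $\hat{y}$ is at most the RHS of every inequality in the family, so the point satisfies the entire family and no inequality is violated. There is no real obstacle here: the only thing to verify carefully is the algebraic equivalence between the two forms of the membership condition for $\hat{I}$, which is immediate.
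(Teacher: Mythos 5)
Your argument is correct, but it takes a different route from the paper. The paper proves Proposition~\ref{prop:relu-separation} simply by specializing the general separation result for the maximum of $d$ affine functions over a box (Proposition~\ref{prop:max_sharp_sep}), which itself rests on the transportation/fractional-knapsack machinery of Corollary~\ref{cor:simplicesP2}; in that view the choice $\hat{I}$ is the ReLU instance ($d=2$, $f^2=0$) of the coordinate-wise minimization \eqref{eqn:max_box_d_separate}. You instead argue directly from the structure of the family \eqref{eqn:ideal-single-relu-2}: the right-hand side at a fixed point decomposes as $b\hat{z}$ plus independent per-coordinate contributions $c_i(I)$ depending only on whether $i\in I$, so the minimum over all $2^\eta$ subsets is attained by the greedy coordinate-wise choice, which is exactly the membership test defining $\hat{I}$ (with ties harmless), and the most violated constraint is the one with smallest right-hand side. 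The two proofs are essentially the same computation seen from different ends: yours is self-contained, elementary, and makes the $\mathcal{O}(\eta)$ separation transparent for the ReLU case (closer in spirit to the extended-abstract treatment in \cite{Anderson:2019}), while the paper's derivation buys uniformity, since the same general result immediately yields separation routines for $d>2$ maxima, the leaky ReLU, and the simplex-domain variants without redoing the coordinate-wise argument each time. One small point of care in your write-up: the constant is $b\hat{z}$, not $b$, and it indeed plays no role in the minimization over $I$, as you note.
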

\begin{proof}
Follows as a special case of Proposition~\ref{prop:max_sharp_sep}.
\qed \end{proof}

Note that \eqref{eqn:relu-big-M-2} and \eqref{eqn:relu-big-M-3} correspond to \eqref{eqn:ideal-single-relu-2} with $I = \llbracket\eta\rrbracket$ and $I = \emptyset$, respectively. All this suggests an iterative approach to formulating ReLU neurons over box domains: start with the big-$M$ formulation \eqref{eqn:relu-big-M}, and use Proposition~\ref{prop:relu-separation} to separate strengthening inequalities from \eqref{eqn:ideal-single-relu-2} as they are needed.




\subsection{The ReLU with one-hot encodings} \label{ssec:one-hot-encoding}

Although box domains are a natural choice for many applications, it is often the case that some (or all) of the first layer of a neural network will be constrained to be the product of simplices. The \emph{one-hot encoding} is a standard technique used in the machine learning community to preprocess discrete or categorical data to a format more amenable for learning (see, for example, \cite[Chapter 2.2]{Bishop:2006}). More formally, if input $x$ is constrained to take categorical values $x \in C = \{c^1,\ldots,c^t\}$, the one-hot transformation encodes this as $\tx \in \{0,1\}^{t}$, where $\tx_i = 1$ if and only if $x = c^i$. In other words, the input is constrained such that $\tx \in \underline{\Delta}^\eta \defeq \Delta^\eta \cap \{0,1\}^\eta$.

It is straightforward to construct a small ideal formulation for $\gr(\ReLu{} \circ f; \underline{\Delta}^\eta)$ as $\Set{ (x,\sum_{i=1}^\eta \max\{0, w_ix_i + b\}) | x \in \underline{\Delta}^\eta}$. However, it is typically the case that multiple features will be present in the input, meaning that the input domain would consist of the product of (potentially many) simplices. For example, neural networks have proven well-suited for predicting the propensity for a given DNA sequence to bind with a given protein~\cite{Alipanahi:2015,Zeng:2016}, where the network input consists of a sequence of $n$ base pairs, each of which can take 4 possible values. In this context, the input domain would be $\prod_{i=1}^n \underline{\Delta}^4$.

In this section, we restate the general results presented in Section~\ref{cor:simplicesD2Result}, specialized for the standard case of the ReLU nonlinearity.

\begin{corollary} \label{cor:simplicesD2ReLU}
    Presume that the input domain $D = \Delta^{p_1} \times \cdots \times \Delta^{p_\tau}$ is a product of $\tau$ simplices, and that $f(x) = \sum_{i=1}^\tau \sum_{j=1}^{p_i} w_{i,j} x_{i,j} + b$ is an affine function. Presume that, for each $i \in \llbracket \tau \rrbracket$, the weights are sorted such that $w_{i,1} \leq \cdots \leq w_{i,p_i}$. Then an ideal formulation for $\gr(\emph{\ReLu{}} \circ f; D)$ is:
    \begin{subequations}
    \begin{gather}
        y \geq w \cdot x + b \\
        \begin{aligned}
        y \leq \sum_{i=1}^\tau \left( w_{i,J(i)} z + \sum_{j=J(i)+1}^{p_i} (w_{i,j} - w_{i,J(i)})x_{i,j} \right) + bz  \\
        \forall\text{ mappings $J:\llbracket\tau\rrbracket\to\mathbb{Z}$ with $J(i)\in\llbracket p_i\rrbracket\ \forall i\in\llbracket\tau\rrbracket$} \end{aligned}\label{eqn:one-hot-2} \\
        (x,y,z) \in D \times \bbR_{\geq 0} \times \{0,1\}.
    \end{gather}
    \end{subequations}
    Moreover, a most-violated constraint from the family \eqref{eqn:one-hot-2}, if one exists, can be identified in $\mathcal{O}(p_1+\cdots+p_\tau)$ time. Finally, none of the constraints from \eqref{eqn:one-hot-2} are redundant.
\end{corollary}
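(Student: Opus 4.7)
The plan is to derive Corollary~\ref{cor:simplicesD2ReLU} as a direct specialization of the results for $\Max$ over products of simplices, by identifying the ReLU with the maximum of two affine functions. Concretely, I would set $d=2$ with $f^1(x) = f(x) = \sum_{i,j} w_{i,j} x_{i,j} + b$ and $f^2(x) \equiv 0$, so that $\Max{}\circ(f^1,f^2) = \ReLu{}\circ f$. With this assignment, the quantity $\tw_{i,j} = w^1_{i,j} - w^2_{i,j}$ from Corollary~\ref{cor:simplicesD2Result} reduces to $w_{i,j}$, which justifies both the hypothesis $w_{i,1}\le\cdots\le w_{i,p_i}$ and the explicit form of the upper-bound family.

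Next, I would plug $w^2 \equiv 0$, $b^2 = 0$, $b^1 = b$ into the inequality \eqref{eqn:simplicesD2upperBound} and observe that the $\sum_j w^2_{i,j} x_{i,j}$ terms vanish and $(b^1 - b^2)z + b^2$ collapses to $bz$, yielding precisely \eqref{eqn:one-hot-2}. The two lower-bound constraints arising from \eqref{eqn:max_sharp_set_dual-2} are $y \ge w\cdot x + b$ and $y \ge 0$; the second is absorbed into the explicit restriction $y \in \bbR_{\ge 0}$. To obtain idealness (rather than just sharpness), I would invoke Corollary~\ref{cor:max_2_ideal_general}, which asserts that $\Rsharp$ already coincides with $\Rideal$ (and hence with the Cayley embedding $\Rcayley$) whenever $d = 2$; combined with Corollary~\ref{cor:simplicesD2Result} this gives an ideal MIP formulation in the variables $(x,y,z)$.

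For the separation claim, I would simply quote the $\mathcal{O}(p_1 + \cdots + p_\tau)$ separation statement of Corollary~\ref{cor:simplicesD2Result}, since the translation preserves the fractional-knapsack structure of the associated transportation problem. For the non-redundancy claim, I would apply Proposition~\ref{prop:simplicesD2FacetDefining} directly to the family \eqref{eqn:one-hot-2}: the proposition is stated under Assumption~\ref{ass:amphibious} (i.e.\ the neuron is irreducible), which in the ReLU setting amounts to $f$ taking both strictly positive and strictly negative values somewhere on $D$; the conclusion then transports verbatim, asserting that each mapping $J:\llbracket\tau\rrbracket\to\bbZ$ with $J(i)\in\llbracket p_i\rrbracket$ yields an irredundant inequality of $\Proj_{x,y,z}(\Rcayley)$.

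The main obstacle is largely bookkeeping rather than a new idea: one must confirm that the specialization $f^2\equiv 0$ is compatible with the irreducibility assumption used in Proposition~\ref{prop:simplicesD2FacetDefining} and that the removal of the redundant constraint $y \ge 0$ (absorbed into $y\in\bbR_{\ge 0}$) does not affect facet-definingness of the remaining lower bound $y\ge w\cdot x + b$. Since facet-definingness is inherited from the unreduced formulation as long as the non-negativity of $y$ is implied by the other constraints at an interior feasible point, this verification is routine. Once these points are checked, the corollary follows by direct application of the earlier machinery, with no new combinatorial argument required.
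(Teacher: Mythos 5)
Your proposal is correct and follows essentially the same route as the paper: specialize to $d=2$ with $f^2\equiv 0$, apply Corollary~\ref{cor:simplicesD2Result} for the explicit inequalities and the $\mathcal{O}(p_1+\cdots+p_\tau)$ separation, invoke Corollary~\ref{cor:max_2_ideal_general} to upgrade sharpness to idealness, and use Proposition~\ref{prop:simplicesD2FacetDefining} for irredundancy. The extra bookkeeping you carry out (substituting $w^2=0$, $b^2=0$ and absorbing $y\ge 0$) is exactly what the paper leaves implicit.
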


\begin{proof}
Follows directly from applying Corollary~\ref{cor:simplicesD2Result} to the set $\Rsharp$.
By Corollary~\ref{cor:max_2_ideal_general}, this set actually leads to an ideal formulation, because we are taking the maximum of only two functions (with one of them being zero).  The statement about non-redundancy follows from Proposition~\ref{prop:simplicesD2FacetDefining}.
\qed\end{proof}

\subsection{The leaky ReLU over a box domain}
A slightly more exotic variant of the ReLU is the \emph{leaky ReLU}, defined as $\Leaky{}(v;\alpha) = \max\{\alpha v, v\}$ for some constant $0 < \alpha < 1$. Instead of fixing any negative input to zero, the leaky ReLU scales it by a (typically small) constant $\alpha$. This has been empirically observed to help avoid the ``vanishing gradient'' problem during the training of certain networks~\cite{Maas:2013,Xu:2015b}. We present analogous results for the leaky ReLU as for the ReLU: an ideal MIP formulation with an efficient separation routine for the constraints. 
\begin{proposition} \label{prop:ideal-leaky-relu}
    Take some affine function $f(x) = w \cdot x + b$ over input domain $D = [L,U]$. The following is a valid formulation for $\gr(\emph{\Leaky{}} \circ f; [L,U])$:
    \begin{subequations} \label{eqn:leaky-big-M}
    \begin{align}
        y &\geq f(x) \label{eqn:leaky-big-M-1} \\
        y &\geq \alpha f(x) \label{eqn:leaky-big-M-2} \\
        y &\leq f(x) - (1-\alpha) \cdot M^-(f; [L,U]) \cdot (1-z) \label{eqn:leaky-big-M-3} \\
        y &\leq \alpha f(x) - (\alpha-1) \cdot M^+(f; [L,U]) \cdot z \label{eqn:leaky-big-M-4} \\
        (x,y,z) &\in [L,U] \times \bbR \times [0,1] \\
        z &\in \{0,1\}.
    \end{align}
    \end{subequations}
    Moreover, an ideal formulation is given by \eqref{eqn:leaky-big-M}, along with the constraints
    \begin{align} \label{eqn:leaky-cut}
        y \leq& \left(\sum_{i \in I} w_i(x_i - \breve{L}_i(1-z)) + \left(b + \sum_{i \not\in I} w_i\breve{U}_i\right)z\right) \nonumber \\
        & + \alpha\left(\sum_{i \not\in I} w_i(x_i - \breve{U}_iz) + \left(b + \sum_{i \in I} w_i\breve{L}_i\right)(1-z)\right) \quad \forall I \subseteq \llbracket \eta \rrbracket.
    \end{align}
    Additionally, the most violated inequality from the family \eqref{eqn:leaky-cut} can be separated in $\mathcal{O}(\eta)$ time. Finally, each inequality in \emph{(\ref{eqn:leaky-big-M-1}--\ref{eqn:leaky-big-M-4})} and \eqref{eqn:leaky-cut} is facet-defining.
\end{proposition}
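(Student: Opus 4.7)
My plan is to derive the entire proposition as a specialization of the general machinery already developed, since the leaky ReLU can be written as $\Leaky(v;\alpha) = \max\{v,\alpha v\}$, i.e., a maximum of $d=2$ affine functions. Concretely, set $w^1 = w$, $b^1 = b$, $w^2 = \alpha w$, $b^2 = \alpha b$, identify $z = z_1$, $1-z = z_2$, and apply Proposition~\ref{prop:max_sharp} and Corollary~\ref{cor:max_2_ideal} to the set $\gr(\Max \circ (f^1,f^2);[L,U]) = \gr(\Leaky \circ f;[L,U])$.

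The first step is to verify that the four big-$M$ constraints \eqref{eqn:leaky-big-M-1}--\eqref{eqn:leaky-big-M-4} match the constraints \eqref{eqn:max_d_box-2} and \eqref{eqn:max_big_m-1} of Proposition~\ref{prop:max_sharp}. The lower bounds are immediate. For the upper bounds one computes, using $(w^k_i - w^\ell_i) = (\alpha-1)w_i$ or $(1-\alpha)w_i$ and the sign convention built into $\breve{L}_i,\breve{U}_i$, that
\[
N^{1,2} = (1-\alpha)\bigl(b - M^-(f;[L,U])\bigr), \qquad N^{2,1} = (1-\alpha)\bigl(M^+(f;[L,U]) - b\bigr),
\]
after which substitution and collecting the $z$ and $1-z$ coefficients reproduces exactly \eqref{eqn:leaky-big-M-3} and \eqref{eqn:leaky-big-M-4}.

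Next I identify each subset $I \subseteq \llbracket \eta \rrbracket$ in \eqref{eqn:leaky-cut} with a mapping $\hat{I} : \llbracket \eta \rrbracket \to \{1,2\}$ defined by $\hat{I}(i) = 1 \iff i \in I$, and match the terms of \eqref{eqn:max_d_box-1} coordinate by coordinate. For $i \in I$ the choice $\hat{I}(i)=1$ contributes $w_i x_i$ to the $x$-coefficient, a zero coefficient on $z_1 = z$, and $\max\{(\alpha-1)w_i L_i,(\alpha-1)w_i U_i\} = (\alpha-1)w_i\breve{L}_i$ on $z_2 = 1-z$; for $i \notin I$ the choice $\hat{I}(i)=2$ contributes $\alpha w_i x_i$ in $x$, a zero coefficient on $z_2$, and $\max\{(1-\alpha)w_i L_i,(1-\alpha)w_i U_i\} = (1-\alpha)w_i\breve{U}_i$ on $z_1$. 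Adding $b^1 z_1 + b^2 z_2 = bz + \alpha b(1-z)$ and regrouping yields precisely \eqref{eqn:leaky-cut}. Validity and hereditary sharpness then follow from Proposition~\ref{prop:max_sharp}, and idealness together with the facet-defining property follow from Corollary~\ref{cor:max_2_ideal}, using Assumption~\ref{ass:amphibious} (irreducibility, which is automatic here since $w \ne 0$ and $\alpha \in (0,1)$ imply that each of $\{v,\alpha v\}$ is strictly maximal on an open subset of $[L,U]$).

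The $\mathcal{O}(\eta)$ separation is then an immediate instance of Proposition~\ref{prop:max_sharp_sep}: with $d=2$, for each coordinate $i$ we simply select $\hat{I}(i) \in \{1,2\}$ minimizing the per-coordinate expression in \eqref{eqn:max_box_d_separate}, which is a constant-time comparison per $i$. This is what makes the bookkeeping tractable: once the correspondence between subsets $I$ and mappings $\hat{I}$ is set up, the separation is inherited for free. The main (minor) obstacle is purely symbolic — carefully tracking the $\breve{L}_i/\breve{U}_i$ sign cases and the $z$ versus $1-z$ swaps so that the two specializations of \eqref{eqn:max_d_box-1} recombine into the single symmetric family \eqref{eqn:leaky-cut}; no new structural argument is required beyond what Section~\ref{ssec:max-cuts} already provides.
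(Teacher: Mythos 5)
Your proposal is correct and follows exactly the paper's route: the paper proves this proposition as a one-line specialization of Corollary~\ref{cor:max_2_ideal} (with validity from Proposition~\ref{prop:max_sharp} and the $\mathcal{O}(\eta)$ separation from Proposition~\ref{prop:max_sharp_sep}), and your computations of $N^{1,2}$, $N^{2,1}$ and the subset-to-mapping correspondence for \eqref{eqn:leaky-cut} are precisely the bookkeeping that specialization entails. The only quibble is your parenthetical that irreducibility is automatic from $w \neq 0$ and $\alpha \in (0,1)$: Assumption~\ref{ass:amphibious} here actually requires $M^-(f;[L,U]) < 0 < M^+(f;[L,U])$ (i.e., $f$ changes sign on the box, otherwise one of the two pieces is never strictly maximal), so it is the paper's standing assumption, not a free consequence, that underwrites the facet-defining claims.
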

\begin{proof}
Follows as a special case of Corollary~\ref{cor:max_2_ideal}.
\qed \end{proof}
\section{Computational experiments} \label{sec:computational}
To conclude this work, we perform a preliminary computational study of our approaches for ReLU-based networks. We focus on verifying image classification networks trained on the canonical MNIST digit data set~\cite{LeCun:1998}. We train a neural network $f : [0,1]^{28 \times 28} \to \bbR^{10}$, where each of the $10$ outputs corresponds to the logits\footnote{\blue{In this context, logits are non-normalized predictions of the neural network so that $\tilde{x} \in [0,1]^{28 \times 28}$ is predicted to be digit $i-1$ with probability $\exp(f(\tilde{x})_i)/\sum_{j=1}^{10}\exp(f(\tilde{x})_j)$    \cite{logits-def}.}} for each of the digits from 0 to 9. Given a training image $\tilde{x} \in [0,1]^{28 \times 28}$, our goal is to prove that there does not exist a perturbation of $\tilde{x}$ such that the neural network $f$ produces a wildly different classification result. If $f(\tilde{x})_i = \max_{j=1}^{10} f(\tilde{x})_j$, then $\tilde{x}$ is placed in class $i$. Consider an input image with known label $i$. To evaluate robustness around $\tilde{x}$ with respect to class $j$, select some small $\epsilon > 0$ and solve the problem $\max_{a : ||a||_\infty \leq \epsilon}\: f(\tilde{x} + a)_j - f(\tilde{x} + a)_i$. If the optimal solution (or a dual bound thereof) is less than zero, this verifies that our network is robust around $\tilde{x}$ as we cannot produce a small perturbation that will flip the classification from $i$ to $j$. 
\blue{We note that in the literature there are a number of variants of the verification problem presented above, produced by  selecting a different objective function~\cite{Liu:2019,Xiao:2018} or constraint set~\cite{Engstrom:2019}. We use a model similar to that of Dvijotham et al.~\cite{Dvijotham:2018,Dvijotham:2018a}.}


\blue{We train two models, each using the same architecture, with and without L1 regularization. The architecture starts with a convolutional layer with ReLU activation functions (4 filters, kernel size of 4, stride of 2), which has 676 ReLUs, then a linear convolutional layer (with the same parameters but without ReLUs), feeding into a dense layer of 16 ReLU neurons, and then a dense linear layer with one output per digit representing the logits. Finally, we have a softmax layer that is only enabled during training time to normalize the logits and output probabilities.} \blue{Such a network is smaller than typically used for image classification tasks, but is nonetheless capable of achieving near-perfect out of sample accuracy on the MNIST data set, and of presenting us with challenging optimization problems.} We generate 100 instances for each network by randomly selecting images $\tilde{x}$ with true label $i$ from the test data, along with a random target adversarial class $j \neq i$. 

\blue{As a general rule of thumb, larger networks will be capable of achieving higher accuracy, but will lead to larger optimization problems which are more difficult to solve. However, even with a fixed network architecture, there can still be dramatic variability in the difficulty of optimizing over different parameter realizations. We refer the interested reader to Ryu et al.~\cite{Ryu:2019} for an example of this phenomena in reinforcement learning. Moreover, in the scope of this work} we make no attempts to utilize recent techniques that train the networks to be verifiable~\cite{Dvijotham:2018,Wong:2017,Wong:2018,Xiao:2018}.

For all experiments, we use the Gurobi v7.5.2 solver, running with a single thread on a machine with 128 GB of RAM and 32 CPUs at 2.30 GHz. We use a time limit of 30 minutes (1800 s) for each run. We perform our experiments using the \texttt{tf.opt} package for optimization over trained neural networks; \texttt{tf.opt} is under active development at Google, with the intention to open source the project in the future. The \emph{big-$M$ + \eqref{eqn:ideal-single-relu-2}} method is the big-$M$ formulation \eqref{eqn:relu-big-M} paired with separation\footnote{We use cut callbacks in Gurobi to inject separated inequalities into the cut loop. While this offers little control over when the separation procedure is run, it allows us to take advantage of Gurobi's sophisticated cut management implementation.} over the exponential family \eqref{eqn:ideal-single-relu-2}, and with Gurobi's cutting plane generation turned off. Similarly, the \emph{big-$M$} and the \emph{extended} methods are the big-$M$ formulation \eqref{eqn:relu-big-M} and the extended formulation \eqref{eqn:relu-ideal-extended} respectively, with default Gurobi settings. Finally, the \emph{big-$M$ + no cuts} method turns off Gurobi's cutting plane generation without separating over \eqref{eqn:ideal-single-relu-2}. \blue{As a preprocessing step, if we can infer that a neuron is linear based on its bounds (e.g. a nonnegative lower bound or nonpositive upper bound on the input affine function of a ReLU), we transform it into a linear neuron, thus ensuring Assumption~\ref{ass:amphibious}.}

\begin{table}[tb]
    \centering
    \begin{subtable}{\linewidth}
    \centering
    \subcaption{Network with standard training.}
    \label{tab:small-relu}
    \begin{tabular}{lrrrr}
        \toprule
        method & time (s) & optimality gap & win \\ \midrule
        big-$M$ + \eqref{eqn:ideal-single-relu-2} &  174.49 & 0.53\% & 81 \\ 
        big-$M$  & 1233.49 & 6.03\% &  0 \\
        big-$M$ + no cuts & 1800.00 & 125.6\% & 0 \\
        extended &  890.21 & 1.26\% &  6 \\
        \bottomrule \\
    \end{tabular}
    \end{subtable}
    \bigskip
    \begin{subtable}{\linewidth}
    \centering
    \subcaption{Network trained with L1 regularization.}
    \label{tab:small-relu-l1}
    \begin{tabular}{lrrrr}
        \toprule
        method & time (s) & optimality gap & win \\ \midrule
        big-$M$ + \eqref{eqn:ideal-single-relu-2} & \blue{9.17} & \blue{0\%} & 100 \\ 
        big-$M$  & \blue{434.72} & \blue{1.80\%} & 0 \\
        big-$M$ + no cuts & \blue{1646.45} & \blue{21.52\%} & 0 \\
        extended & \blue{1120.14} & \blue{3.00\%} & 0 \\
        \bottomrule
    \end{tabular}
    \end{subtable}
    \caption{Shifted geometric mean for time and optimality gap taken over 100 instances (shift of 10 and 1, respectively). The ``win'' column is the number of (solved) instances on which the method is the fastest.}
\end{table}

\subsection{Network with standard training} \label{ssec:standard-training}
We start with a model trained with a standard procedure, using the Adam algorithm~\cite{Kingma:2014}, running for 15 epochs with a learning rate of $10^{-3}$. The model attains 97.2\% test accuracy. We select a perturbation ball radius of $\epsilon = 0.1$. We report the results in Table~\ref{tab:small-relu} and in Figure~\ref{fig:perf-profile}. The big-$M$ + \eqref{eqn:ideal-single-relu-2} method solves $7$ times faster on average than the big-$M$ formulation. Indeed, for 79 out of 100 instances the big-$M$ method does not prove optimality after 30 minutes, and it is never the fastest choice (the ``win'' column). Moreover, the big-$M$ + no cuts times out on every instance, implying that using \emph{some} cuts is important. The extended method is roughly 5 times slower than the big-$M$ + \eqref{eqn:ideal-single-relu-2} method, but only exceeds the time limit on 19 instances, and so is substantially more reliable than the big-$M$ method for a network of this size.



\begin{figure}[tpb]
\begin{subfigure}{\linewidth}
\subcaption{Network with standard training.}
\label{fig:perf-profile}
\centering
\begin{tikzpicture}
\begin{axis}[
	xlabel=Time (s),
	ylabel=Number of instances solved,
	xmin=1,
	xmax=1800,
	xmode=log,
	ymin=0,
	ymax=100,
	cycle multi list={Set1-4},
	thick,
	legend pos=outer north east,
	x tick label style={/pgf/number format/.cd, set thousands separator={}},
	scale=0.8
    ]

\addplot +[dotted, line width=1.5] coordinates {
(0.000,0)(200.016,0)(213.400,1)(250.701,2)(255.209,3)(389.366,4)(416.042,5)(428.185,6)(434.593,7)(477.248,8)(482.728,9)(549.913,10)(579.752,11)(626.431,12)(737.075,13)(872.758,14)(915.537,15)(958.458,16)(1012.978,17)(1147.197,18)(1256.312,19)(1573.421,20)(1800.000,20)
};

\addplot +[line width=1.5] coordinates { (0,0)
(0.000,0)(10.926,0)(11.029,1)(12.141,2)(12.574,3)(13.178,4)(13.310,5)(13.519,6)(14.511,7)(15.143,8)(16.212,9)(18.118,10)(19.818,11)(22.535,12)(22.964,13)(26.257,14)(30.373,15)(30.620,16)(33.884,17)(33.910,18)(35.863,19)(36.134,20)(44.185,21)(44.710,22)(46.004,23)(49.104,24)(52.136,25)(55.366,26)(56.669,27)(56.678,28)(62.419,29)(63.682,30)(64.232,31)(70.478,32)(75.827,33)(83.694,34)(88.192,35)(89.130,36)(93.818,37)(93.927,38)(96.258,39)(102.785,40)(108.975,41)(116.348,42)(117.292,43)(130.975,44)(135.703,45)(138.493,46)(140.969,47)(161.391,48)(162.063,49)(186.414,50)(186.478,51)(202.393,52)(213.807,53)(216.762,54)(221.753,55)(236.515,56)(237.472,57)(275.902,58)(304.160,59)(311.763,60)(466.913,61)(516.754,62)(564.719,63)(567.466,64)(596.327,65)(676.548,66)(711.947,67)(736.712,68)(783.253,69)(869.017,70)(895.416,71)(917.905,72)(940.710,73)(946.208,74)(955.712,75)(957.823,76)(1008.568,77)(1031.457,78)(1179.609,79)(1315.609,80)(1327.211,81)(1508.650,82)(1556.477,83)(1676.604,84)(1800.000,84)
};

\addplot +[dashed, line width=1.5] coordinates { (0,0)
(0.000,0)(291.123,0)(304.260,1)(337.765,2)(338.746,3)(354.233,4)(370.973,5)(413.280,6)(424.993,7)(460.116,8)(461.748,9)(463.096,10)(464.743,11)(477.083,12)(482.457,13)(483.141,14)(483.467,15)(500.758,16)(512.934,17)(523.865,18)(532.305,19)(546.597,20)(551.939,21)(560.813,22)(566.648,23)(567.355,24)(572.290,25)(625.472,26)(630.244,27)(638.643,28)(646.618,29)(649.133,30)(675.245,31)(678.859,32)(680.776,33)(691.489,34)(704.245,35)(707.126,36)(722.259,37)(725.898,38)(755.967,39)(775.662,40)(781.173,41)(791.785,42)(800.285,43)(808.156,44)(812.561,45)(819.229,46)(881.397,47)(887.791,48)(892.458,49)(918.828,50)(923.877,51)(937.224,52)(941.182,53)(949.978,54)(957.818,55)(965.734,56)(1038.186,57)(1087.226,58)(1113.039,59)(1116.502,60)(1139.682,61)(1139.921,62)(1154.051,63)(1202.194,64)(1211.171,65)(1211.259,66)(1218.455,67)(1222.273,68)(1244.772,69)(1273.453,70)(1327.791,71)(1329.087,72)(1361.641,73)(1384.576,74)(1389.731,75)(1427.386,76)(1448.842,77)(1471.646,78)(1516.231,79)(1517.456,80)(1800.000,80)
};


\addplot +[dashdotted, line width=1.5] coordinates {
(0.000,0)(1800,0)
};

\legend{big-$M$, big-$M$ + \eqref{eqn:ideal-single-relu-2}, extended, big-$M$ + no cuts}
\end{axis}
\end{tikzpicture}
\end{subfigure}
\par\medskip
\begin{subfigure}{\linewidth}
\subcaption{Network trained with L1 regularization.}
\label{fig:perf-profile-l1}
\centering
\begin{tikzpicture}
\begin{axis}[
	xlabel=Time (s),
	ylabel=Number of instances solved,
	xmin=1,
	xmax=1800,
	xmode=log,
	ymin=0,
	ymax=100,
	cycle multi list={Set1-4},
	thick,
	legend pos=outer north east,
	x tick label style={/pgf/number format/.cd, set thousands separator={}},
	scale=0.8
    ]

\addplot +[dotted, line width=1.5] coordinates {
(0.000,0)(2.672,0)(4.194,1)(4.495,2)(5.500,3)(6.059,4)(6.617,5)(7.319,6)(7.679,7)(7.933,8)(8.579,9)(9.416,10)(10.214,11)(11.252,12)(12.325,13)(14.003,14)(14.139,15)(16.680,16)(17.267,17)(18.140,18)(18.302,19)(19.062,20)(22.982,21)(23.670,22)(23.851,23)(24.166,24)(25.246,25)(28.146,26)(29.314,27)(32.740,28)(33.392,29)(39.119,30)(61.101,31)(92.941,32)(105.757,33)(121.783,34)(143.929,35)(151.098,36)(153.534,37)(179.871,38)(182.272,39)(182.310,40)(196.572,41)(199.991,42)(225.512,43)(227.084,44)(243.006,45)(255.176,46)(259.267,47)(302.941,48)(313.027,49)(335.576,50)(421.178,51)(510.118,52)(529.271,53)(578.211,54)(611.782,55)(612.747,56)(725.068,57)(732.296,58)(763.151,59)(893.462,60)(934.758,61)(1041.979,62)(1102.970,63)(1184.298,64)(1208.981,65)(1229.530,66)(1254.524,67)(1526.415,68)(1665.686,69)(1800.000,69)
};

\addplot +[line width=1.5] coordinates {
(0.000,0)(0.735,0)(0.801,1)(0.833,2)(1.064,3)(1.363,4)(1.403,5)(1.408,6)(1.477,7)(1.500,8)(1.538,9)(1.624,10)(1.628,11)(1.638,12)(1.672,13)(1.702,14)(1.724,15)(1.807,16)(1.867,17)(1.940,18)(2.012,19)(2.116,20)(2.146,21)(2.209,22)(2.289,23)(2.320,24)(2.376,25)(2.415,26)(2.531,27)(2.545,28)(2.604,29)(2.645,30)(2.695,31)(2.780,32)(2.818,33)(2.836,34)(2.850,35)(3.170,36)(3.271,37)(3.398,38)(3.413,39)(3.414,40)(3.482,41)(3.498,42)(3.537,43)(3.546,44)(3.659,45)(3.671,46)(3.683,47)(3.793,48)(3.956,49)(4.143,50)(4.261,51)(4.262,52)(4.381,53)(4.810,54)(4.917,55)(5.184,56)(5.443,57)(6.458,58)(6.545,59)(6.557,60)(6.576,61)(6.645,62)(7.028,63)(7.326,64)(7.617,65)(8.397,66)(8.939,67)(9.044,68)(9.761,69)(10.163,70)(10.277,71)(11.311,72)(11.905,73)(12.156,74)(12.385,75)(12.953,76)(12.991,77)(13.334,78)(14.081,79)(14.219,80)(14.784,81)(15.786,82)(16.550,83)(16.655,84)(17.168,85)(18.588,86)(21.297,87)(22.294,88)(24.003,89)(25.354,90)(27.488,91)(37.696,92)(41.453,93)(45.450,94)(49.440,95)(54.555,96)(66.422,97)(88.877,98)(114.365,99)(1800.000,99)
};

\addplot +[dashed, line width=1.5] coordinates { (0,0)
(0.000,0)(208.316,0)(231.899,1)(331.030,2)(383.609,3)(405.200,4)(406.205,5)(438.488,6)(461.671,7)(464.766,8)(473.584,9)(489.581,10)(504.198,11)(508.922,12)(509.089,13)(527.268,14)(528.185,15)(531.285,16)(535.875,17)(537.501,18)(542.574,19)(561.391,20)(561.452,21)(568.160,22)(572.659,23)(607.407,24)(614.894,25)(618.583,26)(621.456,27)(674.113,28)(681.671,29)(721.298,30)(722.812,31)(734.094,32)(742.035,33)(772.447,34)(774.719,35)(796.008,36)(808.028,37)(827.122,38)(833.355,39)(848.235,40)(872.496,41)(882.005,42)(885.884,43)(904.149,44)(969.544,45)(969.810,46)(975.263,47)(1011.177,48)(1015.720,49)(1017.963,50)(1024.353,51)(1055.543,52)(1084.240,53)(1099.825,54)(1137.626,55)(1161.919,56)(1175.886,57)(1240.898,58)(1296.375,59)(1376.420,60)(1390.923,61)(1487.599,62)(1582.255,63)(1590.398,64)(1590.811,65)(1671.488,66)(1705.309,67)(1736.513,68)(1800.000,68)
};

\addplot +[dashdotted, line width=1.5] coordinates {
(0.000,0)(25.238,0)(58.059,1)(58.896,2)(68.126,3)(219.182,4)(378.731,5)(646.885,6)(1407.056,7)(1537.978,8)(1617.722,9)(1734.822,10)(1800.000,10)
};

\legend{big-$M$, big-$M$ + \eqref{eqn:ideal-single-relu-2}, extended, big-$M$ + no cuts}
\end{axis}
\end{tikzpicture}
\end{subfigure}
\caption{Number of instances solved within a given amount of time. Curves to the upper left are better, with more instances solved in less time.}
\end{figure}
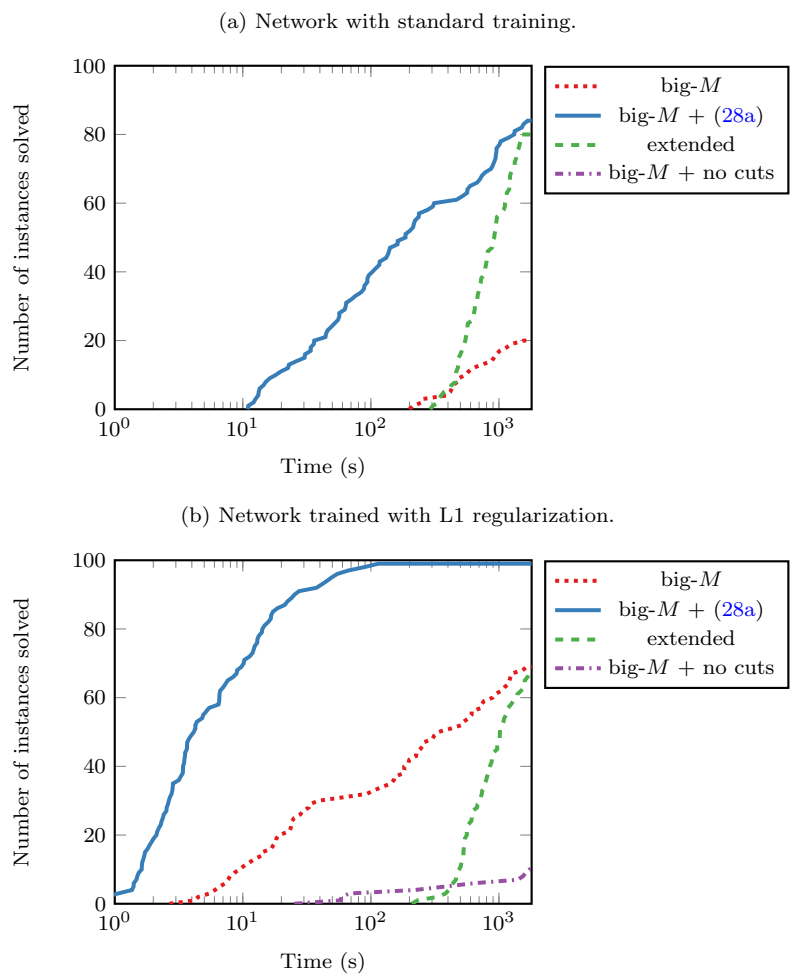

\subsection{ReLU network with L1 regularization} \label{ssec:l1-regularization}

The optimization problems studied in Section~\ref{ssec:standard-training} are surprisingly difficult given the relatively small size of the networks involved. This can largely be attributed to the fact that the weights describing the neural network are almost completely dense. To remedy this, we train a second model, using the same network architecture, but with L1 regularization as suggested by Xiao et al.~\cite{Xiao:2018}. We again set a radius of $\epsilon=0.1$, and train for 100 epochs with a learning rate of $5 \cdot 10^{-4}$ and a regularization parameter of $10^{-4}$. \blue{The network achieves a test accuracy of 98.0\%. With regularization, 4\% of the weights in the network are zero, compared to 0.3\% in the previous network. Moreover, with regularization we can infer from variable bounds that on average 73.1\% of the ReLUs are linear within the perturbation box of radius $\epsilon$, enabling us to eliminate the corresponding binary variables. In the first network, we can do so only for 27.8\% of the ReLUs.}


We report the corresponding results in Table~\ref{tab:small-relu-l1} and Figure~\ref{fig:perf-profile-l1}. While the extended approach does not seem to be substantially affected by the network sparsity, the big-$M$-based approaches are able to exploit it to solve more instances, more quickly. The big-$M$ approach is able to solve 70 of 100 instances to optimality within the time limit, though the mean solve time is still quite large. In contrast, the big-$M$ + \eqref{eqn:ideal-single-relu-2} approach fully exploits the sparsity in the model, solving each instance strictly faster than each of the other approaches. Indeed, the approach is able to solve 69 instances in less than 10 seconds, and solves all instances in under 120 seconds.

\bibliographystyle{spmpsci}
\bibliography{other.bib}

\appendix

\section{A tight big-M formulation for \Max{} on polyhedral domains}\label{app:max_big_m}


We present a tightened big-$M$ formulation for the maximum of $d$ affine functions over an arbitrary polytope input domain. We can view the formulation as a relaxation of the system in Proposition~\ref{prop:max_ideal_set_dual}, where we select $d$ inequalities from each of \eqref{eqn:max_ideal_set_dual-1} and~\eqref{eqn:max_ideal_set_dual-2}: those corresponding to $\bar{\alpha}, \ubar{\alpha} \in \{ w^1, \ldots, w^d \}$. This subset yields a valid formulation, and we obviate the need for direct separation. This formulation can also be viewed as an application of Proposition 6.2 of Vielma~\cite{Vielma:2015}, and is similar to the big-$M$ formulations for generalized disjunctive programs of Trespalacios and Grossmann~\cite{Trespalacios:2014}.

\begin{proposition}\label{prop:max_big_m}
Take coefficients $N$ such that, for each $\ell, k \in \llbracket d \rrbracket$ with $\ell \neq k$,
    \begin{subequations} \label{eqn:optimal-coefficients_app}
    \begin{align}
        N^{\ell,k,+} &\geq \max_{x^k \in D_{|k}}\{(w^k - w^\ell) \cdot x^k\} \label{eqn:optimal-coefficients-1_app}\\
        N^{\ell,k,-} &\leq \min_{x^k \in D_{|k}}\{(w^k - w^\ell) \cdot x^k\} ,\label{eqn:optimal-coefficients-2_app}
    \end{align}
    \end{subequations}
and $N^{k,k,+} = N^{k,k,-} = 0$ for all $k \in \llbracket d \rrbracket$. Then a valid formulation for $\gr(\emph{\Max{}} \circ (f^1,\ldots,f^d); D)$ is:
\begin{subequations}\label{eqn:max_big_m_app}
\begin{align}
    y \leq w^\ell \cdot x + \sum_{k=1}^d (N^{\ell,k,+} + b^k) z_k&\quad\forall \ell \in \llbracket d \rrbracket\label{eqn:max_big_m-1_app}\\
    y \geq w^\ell \cdot x + \sum_{k=1}^d (N^{\ell,k,-} + b^k) z_k&\quad\forall \ell \in \llbracket d \rrbracket\label{eqn:max_big_m-2_app}\\
    (x,y,z) \in D \times \bbR \times \Delta^d \label{eqn:max_big_m-3_app} \\
    z \in \{0,1\}^d\label{eqn:max_big_m-4_app}
\end{align}
\end{subequations}
\end{proposition}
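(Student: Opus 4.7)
The plan is to verify validity of the formulation---i.e., that the integer-feasible region of~(\ref{eqn:max_big_m_app}) projects exactly onto $\gr(\Max{} \circ (f^1,\ldots,f^d); D)$---by leveraging the dual characterization of the Cayley embedding from Proposition~\ref{prop:max_ideal_set_dual}. The key initial observation I would record is that, at the tightest admissible choices $N^{\ell,k,+} = \max_{x^k \in D_{|k}}(w^k - w^\ell) \cdot x^k$ and $N^{\ell,k,-} = \min_{x^k \in D_{|k}}(w^k - w^\ell) \cdot x^k$, the families~(\ref{eqn:max_big_m-1_app}) and~(\ref{eqn:max_big_m-2_app}) are exactly the specializations of~(\ref{eqn:max_ideal_set_dual-1}) and~(\ref{eqn:max_ideal_set_dual-2}) obtained by restricting the multipliers $\bar{\alpha}$ and $\ubar{\alpha}$ to the finite set $\{w^1,\ldots,w^d\}$. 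Hence the LP relaxation of~(\ref{eqn:max_big_m_app}) contains $\Rcayley$, with any looser admissible $N$ only weakening these inequalities further.

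For the inclusion $\gr(\Max{} \circ (f^1,\ldots,f^d); D) \subseteq \Proj_{x,y}$ of the MIP feasible region, the plan is straightforward: given $(x, y) \in \gr$, pick $k \in \arg\max_\ell f^\ell(x)$ so that $x \in D_{|k}$ and $y = f^k(x)$, and set $z = \mathbf{e}^k$. Then $(x, y, \mathbf{e}^k) \in \Scayley \subseteq \Rcayley$ satisfies every inequality in the dual system of Proposition~\ref{prop:max_ideal_set_dual}, and in particular the finite subfamily defining~(\ref{eqn:max_big_m_app}).

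For the reverse inclusion, I would fix an integer-feasible point $(x, y, \mathbf{e}^k)$ and unpack the constraints. The $\ell = k$ instances of~(\ref{eqn:max_big_m-1_app}) and~(\ref{eqn:max_big_m-2_app}) collapse, using $N^{k,k,\pm} = 0$, to $y = w^k \cdot x + b^k = f^k(x)$. To conclude $f^k(x) = \max_\ell f^\ell(x)$, I would rearrange the $\ell \neq k$ lower-bound constraint into $(w^k - w^\ell) \cdot x \geq N^{\ell,k,-}$ and exploit the fact that every $x^k \in D_{|k}$ satisfies $(w^k - w^\ell) \cdot x^k \geq b^\ell - b^k$ by the very definition of $D_{|k}$, whence the tight $N^{\ell,k,-} = \min_{x^k \in D_{|k}}(w^k - w^\ell) \cdot x^k$ is at least $b^\ell - b^k$. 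This yields $f^k(x) \geq f^\ell(x)$ for every $\ell \neq k$, so that $(x, y) \in \gr$.

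The main obstacle I anticipate is a mild subtlety in the hypotheses: the one-sided bound~(\ref{eqn:optimal-coefficients-2_app}) only controls $N^{\ell,k,-}$ from above, whereas the reverse-inclusion argument tacitly needs $N^{\ell,k,-} \geq b^\ell - b^k$ as well. In the formal write-up I would handle this either by specializing to the tight choice $N^{\ell,k,-} = \min_{x^k \in D_{|k}}(w^k - w^\ell) \cdot x^k$, which automatically satisfies the needed inequality by the argument above, or by noting that the implicit lower bound $b^\ell - b^k$ must be respected in any valid implementation. Once this is accounted for, the argument reduces to a mechanical specialization of the dual form in Proposition~\ref{prop:max_ideal_set_dual}.
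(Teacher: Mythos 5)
Your proposal follows essentially the route the paper intends: the system \eqref{eqn:max_big_m_app} is the specialization of the dual characterization in Proposition~\ref{prop:max_ideal_set_dual} to multipliers $\bar\alpha,\ubar\alpha\in\{w^1,\ldots,w^d\}$, graph points remain feasible because the LP relaxation contains $\Rcayley$, and for an integer point with $z=\mathbf{e}^k$ the $\ell=k$ pair of constraints pins $y=f^k(x)$ while the $\ell\neq k$ lower bounds are what force $x\in D_{|k}$. The paper records this only as the one-line remark preceding the proposition (``select the inequalities corresponding to $\bar\alpha,\ubar\alpha\in\{w^1,\ldots,w^d\}$''), so your explicit reverse-inclusion argument is exactly the content that remark suppresses; note in particular that ``being a relaxation of the ideal system'' is not by itself enough, since dropping inequalities could in principle admit new integer points, which is precisely what your integer-point analysis rules out.

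The subtlety you flag is genuine, and it is a defect of the statement as printed rather than of your argument: with only the one-sided condition \eqref{eqn:optimal-coefficients-2_app}, validity can fail. Take $\eta=1$, $D=[0,1]$, $f^1(x)=x-\tfrac{1}{2}$, $f^2(x)=0$, so that $D_{|2}=[0,\tfrac{1}{2}]$ and $\min_{x^2\in D_{|2}}(w^2-w^1)\cdot x^2=-\tfrac{1}{2}$. Choosing $N^{1,2,-}=-10$ (admissible under \eqref{eqn:optimal-coefficients-2_app}) and all other coefficients tight, the integer point $(x,y,z)=(1,0,(0,1))$ satisfies every constraint of \eqref{eqn:max_big_m_app}, yet $(1,0)$ is not in $\gr(\Max\circ(f^1,f^2);D)$ because the maximum at $x=1$ equals $\tfrac{1}{2}$. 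So the reverse inclusion really does require $N^{\ell,k,-}\ge b^\ell-b^k$ for all $\ell\neq k$; the tight choice $N^{\ell,k,-}=\min_{x^k\in D_{|k}}(w^k-w^\ell)\cdot x^k$ satisfies this automatically, since $(w^k-w^\ell)\cdot x^k\ge b^\ell-b^k$ on $D_{|k}$ and $D_{|k}\neq\emptyset$ by Assumption~\ref{ass:amphibious}. Your first proposed repair---stating the proposition for coefficients with $b^\ell-b^k\le N^{\ell,k,-}\le\min_{x^k\in D_{|k}}(w^k-w^\ell)\cdot x^k$ (or simply the tight value)---is the correct one; the vaguer alternative that ``any valid implementation respects the implicit bound'' should be dropped, since the hypothesis as written does not enforce it.
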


The tightest possible coefficients in \eqref{eqn:optimal-coefficients_app} can be computed exactly by solving an LP for each pair of input affine functions $\ell \neq k$. While this might be exceedingly computationally expensive if $d$ is large, it is potentially viable if $d$ is a small fixed constant. For example, the max pooling neuron computes the maximum over a rectangular window in a larger array~\cite[Section 9.3]{Goodfellow:2016}, and is frequently used in image classification architectures. Typically, max pooling units work with a $2 \times 2$ or a $3 \times 3$ window, in which case $d=4$ or $d=9$, respectively.

In addition, if in practice we observe that if the set $D_{|k}$ is empty, then we can infer that the neuron is not irreducible as the $k$-th input function is never the maximum, and we can safely prune it. In particular, if we attempt to compute the coefficients for $z_k$ and it is proven infeasible, we can prune the $k$-th function.

\end{document}